\title{The p-Laplacian equation in thin domains: The unfolding approach}
\date{2018}
\newtheorem{theorem}{Theorem}[section]
\newtheorem{corollary}{Corollary}[theorem]
\newtheorem{definition}[theorem]{Definition}
\newtheorem{proposition}[theorem]{Proposition}
\newtheorem{remark}{Remark}[section]
\author[1]{Jos\'e M. Arrieta\thanks{e-mail: arrieta@mat.ucm.es.}}
\author[2]{Jean Carlos Nakasato\thanks{e-mail: nakasato@ime.usp.br.}}
\author[2]{Marcone Corr\^ea Pereira\thanks{e-mail: marcone@ime.usp.br.}}
\affil[1]{Depto. de An\'alisis Matem\'atico y Matem\'atica Aplicada,  Universidad Complutense de Madrid, 28040 Madrid, Spain}
\affil[2]{Depto. Matem\'atica Aplicada, IME, Universidade de S\~ao Paulo,
	Rua do Mat\~ao 1010, S\~ao Paulo - SP, Brazil}
\begin{document}
	\maketitle
		\begin{abstract}
		In this work we apply the unfolding operator method to analyze the asymptotic behavior of the solutions of the $p$-Laplacian equation with Neumann boundary condition set in a bounded thin domain of the type 
		$R^\varepsilon=\left\lbrace(x,y)\in\mathbb{R}^2:x\in(0,1)\mbox{ and }0<y<\varepsilon g\left({x}/{\varepsilon^\alpha}\right)\right\rbrace$ where $g$ is a positive periodic function.
		We study the three cases $0<\alpha<1$, $\alpha=1$ and $\alpha>1$ representing respectively weak, resonant and high osillations at the top boundary. In the three cases we deduce the homogenized limit and obtain correctors. 	\end{abstract}

	\noindent \emph{Keywords:} $p$-Laplacian, Neumann boundary condition, Thin domains, Oscillatory boundary, Homogenization. \\
	\noindent 2010 \emph{Mathematics Subject Classification.} 35B25, 35B40, 35J92.

	
	\section{Introduction}

Let $R^\varepsilon \subset \mathbb{R}^2$ be the following family of thin domains  
	\begin{equation} \label{TDs}
	R^\varepsilon=\left\lbrace(x,y)\in\mathbb{R}^2:x\in(0,1)\mbox{ and }0<y<\varepsilon g\left(\frac{x}{\varepsilon^\alpha}\right)\right\rbrace, \qquad \varepsilon>0,
	\end{equation}
	where $\alpha>0$ is a fixed parameter, 
	$g:\mathbb{R}\rightarrow\mathbb{R}$ is a strictly positive function, periodic of period $L$, lower semicontinuous satisfying  
	\begin{equation*}  \label{g0}
	0<g_0 \leq g(x)\leq g_1, \quad \forall x \in (0,L),
	\end{equation*}
	with $g_0 = \min_{x \in \mathbb{R}} g(x)$ and $g_1 = \sup_{x \in \mathbb{R}} g(x)$. 
	
	In this work, we are interested in analyzing the asymptotic behavior of the family of solutions of the nonlinear elliptic equation  
	\begin{equation}\label{problem01}
	\left\lbrace \begin{array}{ll}
	-\Delta_p u_\varepsilon +|u_\varepsilon|^{p-2}u_\varepsilon=f^\varepsilon\mbox{ in }R^\varepsilon \\
	|\nabla u_\varepsilon|^{p-2}\nabla u_\varepsilon \eta_\varepsilon =0\mbox{ on }\partial R^\varepsilon
	\end{array}\right.
	\end{equation}
	where $\eta_\varepsilon$ is the unit outward normal vector to the boundary $\partial R^\varepsilon$, $1<p<\infty$
	 and 
	$$
	\Delta_p\cdot = \mbox{div }\left(|\nabla \cdot|^{p-2}\nabla\cdot\right)
	$$
	denotes the $p$-Laplacian differential operator. We also assume $f^\varepsilon\in L^{p'}(R^\varepsilon)$ where $p'$ is the conjugate exponent of $p$, that is $1/p'+1/p=1$.

	It is known that the variational formulation of \eqref{problem01} is given by
	\begin{equation}\label{variationalproblem01}
	\int_{R^\varepsilon} \left\{ |\nabla u_\varepsilon|^{p-2}\nabla u_\varepsilon\nabla \varphi +|u_\varepsilon|^{p-2}u_\varepsilon\varphi \right\}dxdy=\int_{R^\varepsilon} f^\varepsilon\varphi \, dxdy, \qquad \varphi \in W^{1,p}(R^\varepsilon).
	\end{equation}
	Furthermore,  for each fixed $\varepsilon>0$ the existence and uniqueness of solutions is guaranteed by Minty-Browder's Theorem.  
	Hence, we are interested here in analyzing the asymptotic behavior of the solutions $u_\varepsilon$ as $\varepsilon \to 0$, that is, as the domain $R^\varepsilon$ becomes thinner and thinner although with a high oscillating boundary at the top.
	
	Indeed, since the set $R^\varepsilon \subset (0,1) \times (0,\varepsilon \, g_1 )$ for all $\varepsilon>0$, we have the parameter $\varepsilon$ models the thin domain situation. 
	Moreover, we see that $R^\varepsilon$ has tickness of order $\varepsilon$, and then, it is expected that for $\varepsilon \approx 0$ the sequence of solutions $u_\varepsilon$ will converge to a function of just one single variable $x \in (0,1)$ and that this function will satisfy an equation of the same type as \eqref{problem01} but in one dimension.
	
	On the other side, the parameter $\alpha$ measures the intensity of the oscillations of the top boundary and, as we will see, the homogenized limit equation will depend tightly on this positive number. 
We will deal with three distinct cases: weak oscillatory case ($0<\alpha<1$), the resonant or critical case ($\alpha = 1$), and the high oscillatory one ($\alpha>1$) See Figure \ref{fig1} below where these three cases are illustrated. We will obtain different limit problems according to this three cases.

\begin{figure}[!htb]
\centering
\subfloat[Weak oscillation: $\alpha<1$.]{
\scalebox{0.43}{\includegraphics{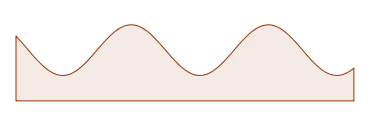}}
\label{figdroopy}
}
\quad 
\subfloat[Resonant case: $\alpha=1$.]{
\scalebox{0.43}{\includegraphics{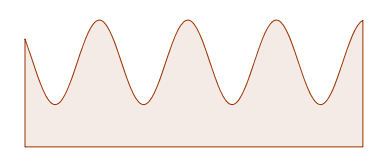}}
\label{figsnoop}
}
\quad
\subfloat[High oscillation: $\alpha>1$.]{
\scalebox{0.5}{\includegraphics{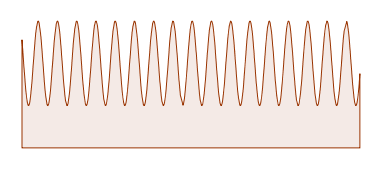}}
\label{figdroopy1}
}
\caption{Examples of oscillatory behavior at the boundary $\partial R^\varepsilon$.}
\label{fig1}
\end{figure}

	Here we will combine techniques such as unfolding operator methods for thin domains, which were developed in \cite{AM,AM2}, as well as, those ones presented in \cite{MasoA,donato1990} in order to analyze monotone operators in perforated domains. Furthermore, we will also obtain corrector results for each case considered here.

	As we will see in this work, the homogenized limit problem is given by the following one-dimensional $p$-Laplacian equation with constant coefficient $q$:
		\begin{equation} \label{limeq}
	\left\lbrace \begin{gathered}
	- q \left(|u'|^{p-2}u'\right)' +|u|^{p-2}u=\bar{f} \quad \mbox{ in }(0,1)\\
	u'(0)=u'(1)=0.
	\end{gathered}\right. .
	\end{equation}

	Indeed, the coefficient $q$ has different expresion for the three different cases of $\alpha$.  As a matter of fact,  for $\alpha = 1$ in \eqref{TDs}, we show that the homogenized coefficient $q$ is a positive constant and it is given by 
	\begin{eqnarray}  \label{coeq}
	q&=&\begin{array}{ll}
		\dfrac{1}{|Y^*|}\displaystyle\int_{Y^*}|\nabla v|^{p-2} \, \partial_{y_1} v \, dy_1dy_2  
	\end{array}
	\end{eqnarray}
	where $Y^*$ is the representative cell of the oscillating domain $R^\varepsilon$
	\begin{equation}\label{basic-cell} 
	Y^*=\left\lbrace \right(y_1,y_2) \in \mathbb{R}^2 : 0<y_1<L\mbox{ and }0<y_2<g(y_1)\rbrace
	\end{equation}
 %
The function $v$ appearing in \eqref{coeq} is an auxiliar function, which is the unique solution of the following problem  
	\begin{equation}\label{auxiliarproblemi}
	\begin{gathered}
	\displaystyle\int_{Y^*}\left|\nabla v\right|^{p-2}\nabla v\nabla \varphi \, dy_1dy_2=0 \quad \forall\varphi\in W^{1,p}_{\#}(Y^*),\\
	(v-y_1)\in  W^{1,p}_{\#}(Y^*) \qquad \textrm{ with } \quad \left\langle (v-y_1) \right\rangle_{Y^*} = 0,
	\end{gathered}
	\end{equation}
	where  
	$$
	W_{\#}^{1,p}(Y^*) = \{ \varphi\in W^{1,p}(Y^*) \, : \,  \varphi |_{\partial_{left} Y^*} = \varphi|_{\partial_{right} Y^*} \}
	$$
	is the space of periodic functions on the horizontal variable $x$, and 	$\left\langle \varphi\right\rangle_{\mathcal{O}}$ denotes the average of the function $\varphi \in L^1_{loc}(\mathbb{R}^N)$ on the open set $\mathcal{O} \subset \mathbb{R}^N$.

		It is worth noting that problem \eqref{auxiliarproblemi} is well posed due again to Minty-Browder's Theorem. This implies that $q$ is well defined and is a positive constant  (see \eqref{q>0}).
		Moreover, we have that the forcing term $\bar f$ of the limit equation \eqref{limeq} is obtained as the limit of the unfolding operator acting on functions $f^\varepsilon$ (see for instance \eqref{fhat01} and \eqref{falpha1} below).

	\par\medskip 
	
	For the case $\alpha<1$, 
	we obtain that the homogenized coefficient depends just on the function $g$, which describes the profile of the oscillatory boundary and on the number $p \in (1,\infty)$, which establishes the order of the $p$-Laplacian operator. It gets the following form   
	\begin{equation*}  
	q=\dfrac{1}{\langle g\rangle_{(0,L)}\left\langle 1/g^{p'-1}\right\rangle_{(0,L)}^{p-1}}.
	\end{equation*}
	We still mention that the forcing term $\bar f$ is also given by \eqref{fhat01} and \eqref{falpha1} since it is computed in the same way that in the previous case $\alpha=1$.

	\par\medskip

	Finally, for the case $\alpha>1$, we first note that forcing term $\bar f$ gets a different expression. Since it is computed in a different way, not anymore as a consequence of the unfolding operator, it takes the form \eqref{falpha>1}. The homogenized coefficient $q$ of the limit equation \eqref{limeq} now assumes the form
	\begin{equation*}
	q=\frac{g_0}{\langle g\rangle_{(0,L)}}.
	\end{equation*}
	It does not depend explicitly on $p$, but  on $g_0$, the minimum value of the $L$-periodic function $g$,  which is strictly positive. 
	For this case, it is easy to see that $q<1$ if $g$ is not constant. In fact, 
	$$
	\frac{g_0}{\langle g\rangle_{(0,L)}} = \frac{Lg_0}{|Y^*|} < \frac{|Y^*|}{|Y^*|} = 1.
	$$
	Somehow, we can say that the high oscillatory behavior tends to affect the system in such way that its diffusion becomes smaller. 
	Notice that $q$ also has a lower bound in the class of functions $g$ considered here. It satisfies $q \geq g_0/g_1$ where $g_1$ is the maximum value of $g$ in $[0,L]$.

	Complete statements on the homogenized limit problems and the corresponding convergence of solutions are stated in Theorem \ref{resultressonant} for $\alpha=1$, Theorem \ref{weakresult} for $0<\alpha<1$,  and Theorem \ref{strongresult} for $\alpha>1$.
	Strong convergence in Sobolev spaces like $W^{1,p}$ are also obtained using the corrector approach discussed for instance in \cite{AM2,MasoA}. 
	We show the existence of a family of functions $W_\varepsilon$ such that  
	$$
	\varepsilon^{-1} ||\nabla u_\varepsilon-W_\varepsilon  ||^p_{L^p(R^\varepsilon)}\rightarrow 0, \quad \textrm{ as } \varepsilon \to 0.
	$$
	Such results are precisely stated in Corollary \ref{cor1045}, \ref{correctoralfa<1} and \ref{correcal>1}, respectively for each case: $\alpha=1$, $0<\alpha<1$ and $\alpha>1$.

	Now, let us notice that there are several works in the literature dealing with issues related to the effect of thickness and roughness on the feature of the solutions of partial differential equations. 	
	Indeed, thin structures with oscillating boundaries appear in many fields of science: fluid dynamics (lubrication), solid mechanics (thin rods, plates or shells) or even physiology (blood circulation). Therefore, analyzing the asymptotic behavior of different models on thin structures and understanding how the geometry and the roughness affects the limit problem is a very relevant issues in applied science. 
	Here, we just mention some works in these directions \cite{MGrau,BFN,BPazanin,GH0,GH,MPov,PazaninGrau,MZAMP}.

	 Furthermore, we point out that the particular case taking $p=2$ in equation \eqref{problem01}, which represents the Laplace differential operator, has been originally discussed in some previous works using different techniques and methods.   Indeed, in \cite{Villanueva2016} the author among other things,  treats the case of $0<\alpha<1$ (even with a doubly oscillatory boundary) via change of variables and rescaling the thin domain as in the classical work \cite{HaleRaugel}. The resonant case, $\alpha=1$, has been studied in \cite{arrieta2011,AP10,MPov} where techniques from homogenization theory have been used.  
	
	The case with fast oscillatory boundary ($\alpha > 1$) was obtained in \cite{AM01} by decomposing the domain in two parts separating the oscillatory boundary. There, the authors also consider more general and complicated geometries which are not given as the graph of certain smooth functions. See also \cite{AM2, Villanueva2016}.  
	
	In \cite{AM,AM2}, the authors introduce the unfolding method in thin domains tackling these three cases for the Laplace operator with Neumann boundary condition in a unified way. Also, the regularity requirement on the function $g$ is very mild.
	
	\par\medskip
	
	Now, concerning with the $p$-Laplacian, we have recently applied techniques from \cite{arrieta2011,MasoA,donato1990} to obtain the limit equation and corrector results in smooth thin domains for the resonant case in \cite{nakasato} improving previous works such as \cite{MRi2}. 
	
	The main goal of this paper is to improve the previous mentioned works considering the p-Laplacian equation. As a matter of fact, combining techniques from \cite{MasoA,donato1990} and \cite{AM,AM2}, we will be able to deal with equation \eqref{problem01} on non-smooth oscillating thin domains for any $1<p<\infty$ and any order of oscillation $\alpha>0$. 
	
	Notice that this is not a easy task since we are studying here a quasilinear differential equation, which can be singular, as it is in the case $1<p<2$, or degenerated, if $2<p<\infty$. Moreover, the problem is posed in non-smooth thin domains like comb-like thin domains where standard extension operators do not apply (see figure \ref{fig21}). Besides, it is worth observing that the unfolding method also allows us to obtain some new strong convergence results for the solutions by corrector approach. 
	
	\begin{figure}[!htb]
		\centering{
			\scalebox{0.35}{\includegraphics{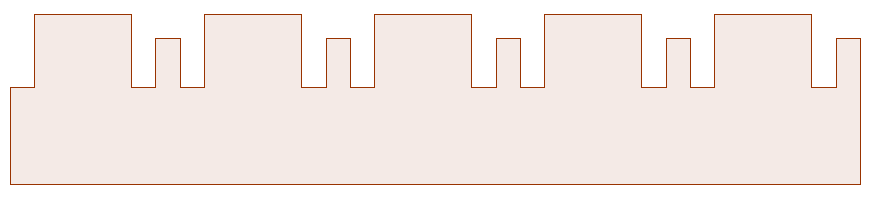}}
		}
		\caption{Comb-like thin domains.}
		\label{fig21}
	\end{figure}
	
	The paper is organized as follows: in Section \ref{USection}, we state some notations and basic results. In Section \ref{seal1}, we consider the resonant case $\alpha=1$ obtaining the homogenized equation via a somehow classical auxiliar problem given by homogenization theory. In Section \ref{seal<1}, the weak oscillation case is studied, and in Section \ref{seal>1}, we finally consider the case of thin domains with very highly oscillatory boundaries.

	\section{Notations and Basic Facts}  \label{USection}
	
	To study the convergence of the solutions of \eqref{variationalproblem01}, we clarify some notation and recall some results concerning monotone operators and the method of unfolding operator. We will need these results for our analysis.
	
	We consider two-dimensional thin domains defined by \eqref{TDs}. Observe that this domains have an oscillatory behavior at its top boundary. The parameters $\varepsilon$ and $\alpha$ are positive and the function $g$ satisfies the following hypothesis

\par\medskip 	

	($\mathbf{H_g}$) {\sl $g : \mathbb{R}\rightarrow \mathbb{R}$ is a strictly positive, bounded, lowersemicontinuous, $L$-periodic function.  Moreover, we define $$g_0 = \min_{x \in \mathbb{R}} g(x) \quad  \textrm{ and } \quad g_1 = \max_{x \in \mathbb{R}} g(x)$$ 
so that $0<g_0\leq g(x)\leq g_1$ for all $x\in\mathbb{R}$}

	
	\par\medskip Recall that lower semicontinuous means that $g(x_0)\leq \displaystyle\liminf_{x\rightarrow x_0}g(x)$, $\forall x_0\in\mathbb{R}$.

Recall that  $Y^*$, given by \eqref{basic-cell}
	is the basic cell of the thin domain $R^\varepsilon$ and 
	$$
	\left\langle \varphi\right\rangle_{\mathcal{O}} := \frac{1}{|\mathcal{O}|} \int_{\mathcal{O}} \varphi(x) \, dx
	$$
	is the average of $\varphi \in L^1_{loc}(\mathbb{R}^2)$ on an open bounded set $\mathcal{O} \subset \mathbb{R}^2$. 
	
	We will also need to consider the following functional spaces which are defined by periodic functions in the variable $y_1 \in (0,L)$. Namely 
	$$
	\begin{gathered}
	L^p_\#(Y^*) = \{ \varphi \in L^p(Y^*) \, : \, \varphi(y_1,y_2) \textrm{ is $L$-periodic in $y_1$ } \}, \\
	 L^p_\#\left((0,1)\times Y^*\right) =  \{ \varphi \in L^p((0,1) \times Y^*) \, : \, \varphi(x, y_1,y_2) \textrm{ is $L$-periodic in $y_1$ } \}, \\
	W_{\#}^{1,p}(Y^*) = \{ \varphi\in W^{1,p}(Y^*) \, : \,  \varphi |_{\partial_{left} Y^*} = \varphi|_{\partial_{right} Y^*}\}.
	\end{gathered}
	$$
	
	If we denote by $[a]_L$ the unique integer number such that $a=[a]_L L+\{a\}_L$ where $\{a\}_L\in [0,L)$, then for each $\varepsilon>0$ and any $x\in\mathbb{R}$, we have 
	\begin{equation*}
	x=\varepsilon^\alpha \left[\frac{x}{\varepsilon^\alpha}\right]_LL+\varepsilon^\alpha\left\{\frac{x}{\varepsilon^\alpha}\right\}_L\mbox{ where }\left\{\frac{x}{\varepsilon^\alpha}\right\}_L\in [0,L).
	\end{equation*}
	
	Let us also denote 
	\begin{equation*}
	I_\varepsilon=\mbox{ Int }\left(\bigcup_{k=0}^{N_\varepsilon}\left[kL\varepsilon^\alpha,(k+1)L\varepsilon^\alpha\right]\right), 
	\end{equation*}
	where $N_\varepsilon$ is the largest integer such that $\varepsilon^\alpha L(N_\varepsilon+1)\leq 1$. 
	We still set 
	$$
	\begin{gathered}
	\Lambda_\varepsilon=(0,1)\backslash I_\varepsilon=[\varepsilon^\alpha L(N_\varepsilon+1),1), \\
	R^\varepsilon_0=\left\lbrace(x,y)\in \mathbb{R}^2:x\in I_\varepsilon, 0<y<\varepsilon^\alpha g\left(\frac{x}{\varepsilon^\alpha}\right)\right\rbrace, \\
	R^\varepsilon_1=\left\lbrace(x,y)\in \mathbb{R}^2:x\in \Lambda_\varepsilon, 0<y<\varepsilon^\alpha g\left(\frac{x}{\varepsilon^\alpha}\right)\right\rbrace.
	\end{gathered}
	$$
	
	Observe that we  have $\Lambda_\varepsilon=\emptyset\,\,$  if $\varepsilon^\alpha L(N_\varepsilon+1)=1$. In this case $R_0^\varepsilon=R^\varepsilon$ and $R_1^\varepsilon=\emptyset$.

	The following well known inequalities will be needed throughout the paper (see \cite{lindqvist}).
	
	\begin{proposition}\label{proposicaoplaplaciano}
		Let $x,y\in\mathbb{R}^n$.
		\begin{itemize}
			\item If $p\geq 2$, then
			\begin{equation*}\label{eq1}
			\langle|x|^{p-2}x-|y|^{p-2}y,x-y\rangle \geq c_p|x-y|^p.
			\end{equation*}
			\item If $1<p<2$, then
			\begin{eqnarray*}\label{eq2}
			\langle|x|^{p-2}x-|y|^{p-2}y,x-y\rangle&\geq&c_p|x-y|^2(|x|+|y|)^{p-2}\nonumber\\ &\geq& c_p|x-y|^2(1+|x|+|y|)^{p-2}.
			\end{eqnarray*}
		\end{itemize}
	\end{proposition}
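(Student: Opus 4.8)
The plan is to establish the two inequalities by separate elementary arguments, in each case reducing the vector estimate to a scalar one.

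\emph{The case $p\geq 2$.} I would start from the algebraic identity, valid for all $x,y\in\mathbb{R}^n$,
\[
\langle |x|^{p-2}x-|y|^{p-2}y,\,x-y\rangle=\tfrac12\big(|x|^{p-2}+|y|^{p-2}\big)\,|x-y|^2+\tfrac12\big(|x|^{p-2}-|y|^{p-2}\big)\big(|x|^2-|y|^2\big),
\]
which follows by expanding the inner products $\langle |x|^{p-2}x,\,x-y\rangle$ and $\langle |y|^{p-2}y,\,x-y\rangle$ and regrouping. Since $p\geq 2$, the map $t\mapsto t^{p-2}$ is nondecreasing on $[0,\infty)$, so $|x|^{p-2}-|y|^{p-2}$ and $|x|^2-|y|^2$ have the same sign and the last term is $\geq 0$; dropping it gives $\langle\cdots\rangle\geq\tfrac12(|x|^{p-2}+|y|^{p-2})\,|x-y|^2$. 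It then suffices to use $|x-y|\leq 2\max\{|x|,|y|\}$, hence $|x-y|^{p-2}\leq 2^{p-2}\max\{|x|,|y|\}^{p-2}\leq 2^{p-2}(|x|^{p-2}+|y|^{p-2})$, to obtain the claim with $c_p=2^{1-p}$.

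\emph{The case $1<p<2$.} Here I would exploit that $F(z):=|z|^{p-2}z$ is continuous on $\mathbb{R}^n$ and of class $C^1$ off the origin, with Jacobian $DF(z)=|z|^{p-2}I+(p-2)|z|^{p-4}\,z\otimes z$, so that by the Cauchy--Schwarz inequality together with $p-2<0$,
\[
\langle DF(z)\xi,\xi\rangle=|z|^{p-2}|\xi|^2+(p-2)|z|^{p-4}\langle z,\xi\rangle^2\;\geq\;(p-1)\,|z|^{p-2}|\xi|^2,\qquad \xi\in\mathbb{R}^n.
\]
Writing $z_t:=y+t(x-y)$ and integrating along the segment from $y$ to $x$,
\[
\langle F(x)-F(y),\,x-y\rangle=\int_0^1\langle DF(z_t)(x-y),\,x-y\rangle\,dt\;\geq\;(p-1)\,|x-y|^2\int_0^1|z_t|^{p-2}\,dt .
\]
Since $|z_t|=|(1-t)y+tx|\leq|x|+|y|$ and $p-2<0$, we have $|z_t|^{p-2}\geq(|x|+|y|)^{p-2}$ pointwise, so the last integral is $\geq(|x|+|y|)^{p-2}$; this gives the first bound with $c_p=p-1$, and the second one follows at once from $|x|+|y|\leq 1+|x|+|y|$ and $p-2<0$.

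I expect the only delicate point to be the integral representation when the segment $[y,x]$ meets the origin (which happens only for $1<p<2$): there $|z_t|^{p-2}$ blows up like $|t-t_0|^{p-2}$ with exponent in $(-1,0)$, hence is still integrable, and $t\mapsto F(z_t)$ remains absolutely continuous on $[0,1]$ (it is continuous everywhere, $C^1$ off $t_0$, with derivative in $L^1(0,1)$), so the fundamental theorem of calculus still applies. Alternatively, one proves the inequality first for $x,y\neq0$ with $0\notin[y,x]$ and then extends to the general case by continuity of both sides, the case $x=y$ being trivial.
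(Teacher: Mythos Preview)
Your proof is correct. Both cases are handled cleanly: the algebraic identity in the $p\geq 2$ case checks out and the sign argument is valid, yielding $c_p=2^{1-p}$; the line-integral argument for $1<p<2$ is standard and your treatment of the singularity at the origin (integrability of $|t-t_0|^{p-2}$ for $p-2\in(-1,0)$, or extension by continuity) is adequate.

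As for comparison with the paper: there is nothing to compare against. The paper does not prove this proposition at all; it states the inequalities as well known and simply refers the reader to Lindqvist's notes \cite{lindqvist}. Your write-up therefore supplies a self-contained argument where the paper gives only a citation. The approach you take (the polarization identity for $p\geq 2$, and the differential/integral representation of $F(x)-F(y)$ for $1<p<2$) is essentially the classical one found in that reference, so no genuinely different route is being proposed.
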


	\begin{corollary}\label{corolarioplaplaciano}
	If  for $1<p<\infty$, we denote by $a_{p}(s)=|s|^{p-2}s$ then $a_p\circ a_{p'}(s)=s$ if $\frac{1}{p}+\frac{1}{p'}=1$, that is, $a_p$ and $a_{p'}$ are inverse functions. Hence, 	
		\begin{itemize}
			\item If $1<p'< 2$ (i.e, $p\geq 2$), then
			\begin{equation*}
			\left||u|^{p'-2}u-|v|^{p'-2}v\right|\leq c|u-v|^{p'-1}.
			\end{equation*}
			\item If $p'\geq 2$ (i.e, $1<p<2$), then
			\begin{eqnarray*}
			\left||u|^{p'-2}u-|v|^{p'-2}v\right|&\leq& c|u-v|(|u|+|v|)^{p'-2}\nonumber\\
			&\leq& c|u-v|(1+|u|+|v|)^{p'-2}.
			\end{eqnarray*}
		\end{itemize}
	\end{corollary}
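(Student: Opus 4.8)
The plan is to deduce both inequalities from Proposition \ref{proposicaoplaplaciano} by composing with the inverse map, so the first step is to establish that $a_p$ and $a_{p'}$ really are mutual inverses. Since $|a_{p'}(s)| = |s|^{p'-1}$, one has
\[
a_p(a_{p'}(s)) = |a_{p'}(s)|^{p-2}\, a_{p'}(s) = |s|^{(p'-1)(p-2)+(p'-2)}\, s = |s|^{(p-1)(p'-1)-1}\, s .
\]
The hypothesis $1/p+1/p'=1$ is equivalent to $(p-1)(p'-1)=1$, so the exponent is $0$ and $a_p(a_{p'}(s))=s$ for all $s$; by symmetry $a_{p'}(a_p(s))=s$ as well. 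The same identity $(p-1)(p'-1)=1$ is what will be used below, in the forms $1/(p-1)=p'-1$ and $(p'-1)(p-2)=-(p'-2)$.

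For the estimates, fix $u,v\in\mathbb{R}^n$ and set $x=a_{p'}(u)$, $y=a_{p'}(v)$, so that $u=a_p(x)$, $v=a_p(y)$ and $|x|=|u|^{p'-1}$, $|y|=|v|^{p'-1}$. Combining the Cauchy--Schwarz inequality $\langle a_p(x)-a_p(y),\,x-y\rangle \le |a_p(x)-a_p(y)|\,|x-y|$ with Proposition \ref{proposicaoplaplaciano} applied to $x$ and $y$ does the job. When $p\ge 2$ (equivalently $1<p'\le 2$), the first bound of the Proposition gives $|u-v|\,|x-y|\ge c_p|x-y|^p$, hence $|u-v|\ge c_p|a_{p'}(u)-a_{p'}(v)|^{p-1}$, and raising to the power $1/(p-1)=p'-1$ yields $|a_{p'}(u)-a_{p'}(v)|\le c\,|u-v|^{p'-1}$. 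When $1<p<2$ (equivalently $p'\ge 2$), the second bound gives $|u-v|\ge c_p|a_{p'}(u)-a_{p'}(v)|\,(|u|^{p'-1}+|v|^{p'-1})^{p-2}$; since $p'-1\ge 1$, the superadditivity inequality $a^t+b^t\le (a+b)^t$ (for $a,b\ge 0$, $t\ge 1$) combined with $p-2<0$ gives $(|u|^{p'-1}+|v|^{p'-1})^{p-2}\ge (|u|+|v|)^{(p'-1)(p-2)}=(|u|+|v|)^{-(p'-2)}$, so $|a_{p'}(u)-a_{p'}(v)|\le c\,|u-v|(|u|+|v|)^{p'-2}$; the last inequality in the statement follows since $p'-2\ge 0$ and $1+|u|+|v|\ge |u|+|v|$. (The trivial case $u=v=0$ may be excluded from this chain.)

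Once the inverse relation is in hand the computation is essentially bookkeeping with conjugate exponents; I expect the only point needing genuine care is the singular regime $1<p<2$, where one must keep track of the sign of $p-2$ and use $a^t+b^t\le (a+b)^t$ to turn $(|u|^{p'-1}+|v|^{p'-1})^{p-2}$ into a power of $|u|+|v|$. The rest is direct substitution into Proposition \ref{proposicaoplaplaciano}.
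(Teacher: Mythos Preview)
Your proof is correct and follows exactly the route the paper implicitly intends: the paper states this result without proof, as a direct corollary of Proposition~\ref{proposicaoplaplaciano}, and your argument---substituting $x=a_{p'}(u)$, $y=a_{p'}(v)$ and combining Cauchy--Schwarz with the monotonicity bounds---is the natural way to fill in that gap. The exponent bookkeeping via $(p-1)(p'-1)=1$ and the superadditivity step $(|u|^{p'-1}+|v|^{p'-1})^{p-2}\ge (|u|+|v|)^{-(p'-2)}$ are both handled correctly.
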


	Now, let us recall the definition to the unfolding operator and some of its properties. For proofs and details, see \cite{AM,AM2}.
	 
	\begin{definition}
		Let $\varphi$ be a Lebesgue-measurable function in $R^\varepsilon$. The unfolding operator $T_\varepsilon$ acting on $\varphi$ is defined as the following function in $(0,1)\times Y^*$
		\begin{eqnarray*}
		T_\varepsilon\varphi(x,y_1,y_2)=\left\lbrace\begin{array}{ll}
		\varphi\left(\varepsilon^\alpha\left[\frac{x}{\varepsilon^\alpha}\right]_LL+\varepsilon^\alpha y_1,\varepsilon y_2\right)\mbox{, for }(x,y_1,y_2)\in I_\varepsilon\times Y^*,\\
		0, \mbox{ for }(x,y_1,y_2)\in \Lambda_\varepsilon\times Y^*.
		\end{array}\right.
		\end{eqnarray*}
	\end{definition}

	\begin{proposition}\label{unfoldingproperties}
		The unfolding operator satifies the following properties:
		\begin{enumerate}
			\item $T_\varepsilon$ is linear;
			\item $T_\varepsilon(\varphi\psi)=T_\varepsilon(\varphi)T_\varepsilon(\psi)$, for all $\varphi$, $\psi$ Lebesgue mesurable in $R^\varepsilon$;
			\item $\forall\varphi\in L^p(R^\varepsilon)$, $1\leq p\leq \infty$,
			\begin{equation*}
			T_\varepsilon(\varphi)\left(x,\left\lbrace\frac{x}{\varepsilon^\alpha}\right\rbrace_L,\frac{y}{\varepsilon}\right)=\varphi(x,y),
			\end{equation*}
			for $(x,y)\in R_0^\varepsilon$.
			\item Let $\varphi$ a Lebesgue mesurable function in $Y^*$ extended periodically in the first variable. Then, $\varphi^\varepsilon(x,y)=\varphi\left(\frac{x}{\varepsilon^\alpha},\frac{y}{\varepsilon}\right)$ is mesurable in $R^\varepsilon$ and
			\begin{equation*}
			T_\varepsilon(\varphi^\varepsilon)(x,y_1,y_2)=\varphi(y_1,y_2),\forall(x,y_1,y_2)\in I_\varepsilon\times Y^*.
			\end{equation*} 
			Moreover, if $\varphi\in L^p(Y^*)$, then $\varphi^\varepsilon\in L^p(R^\varepsilon)$;
			\item Let $\varphi^\varepsilon\in L^1(R^\varepsilon)$. Then,
			\begin{eqnarray*}
				& & \frac{1}{L}\int_{(0,1)\times Y^*}T_\varepsilon(\varphi)(x,y_1,y_2)dxdy_1dy_2=\frac{1}{\varepsilon}\int_{R_0^\varepsilon}\varphi(x,y)dxdy\\
				&& \qquad \qquad = \frac{1}{\varepsilon}\int_{R^\varepsilon}\varphi(x,y)dxdy-\frac{1}{\varepsilon}\int_{R_1^\varepsilon}\varphi(x,y)dxdy;
			\end{eqnarray*}
			\item $\forall \varphi \in L^p(R^\varepsilon)$, $T_\varepsilon(\varphi)\in L^p\left((0,1)\times Y^*\right)$, $1\leq p\leq \infty$. Moreover
			\begin{equation*}
			\left|\left|T_\varepsilon(\varphi)\right|\right|_{L^p\left((0,1)\times Y^*\right)}=\left(\frac{L}{\varepsilon}\right)^{\frac{1}{p}}\left|\left|\varphi\right|\right|_{L^p(R_0^\varepsilon)}\leq \left(\frac{L}{\varepsilon}\right)^{\frac{1}{p}}\left|\left|\varphi\right|\right|_{L^p(R^\varepsilon)}.
			\end{equation*}
			If $p=\infty$,
			\begin{equation*}
			\left|\left|T_\varepsilon(\varphi)\right|\right|_{L^\infty\left((0,1)\times Y^*\right)}=\left|\left|\varphi\right|\right|_{L^\infty(R_0^\varepsilon)}\leq \left|\left|\varphi\right|\right|_{L^\infty(R^\varepsilon)};
			\end{equation*}
			\item $\forall\varphi \in W^{1,p}(R^\varepsilon)$, $1\leq p\leq \infty$,
			\begin{equation*}
			\partial_{y_1}T_\varepsilon(\varphi)=\varepsilon^\alpha T_\varepsilon(\partial_{x}\varphi)\mbox{ and }\partial_{y_2}T_\varepsilon(\varphi)=\varepsilon T_\varepsilon(\partial_{y}\varphi)\mbox{ a.e. in }(0,1)\times Y^*;
			\end{equation*}
			\item If $\varphi \in W^{1,p}(R^\varepsilon)$, then $T_\varepsilon(\varphi)  \in L^p\left((0,1);W^{1,p}(Y^*)\right)$, $1\leq p\leq \infty$. Besides, for $1\leq p< \infty$, we have 
			$$
			\begin{gathered}
				\left|\left|\partial_{y_1}T_\varepsilon(\varphi)\right|\right|_{L^p\left((0,1)\times Y^*\right)}=\varepsilon^\alpha\left(\frac{L}{\varepsilon}\right)^{\frac{1}{p}}\left|\left|\partial_{x}\varphi\right|\right|_{L^p(R_0^\varepsilon)}\leq \varepsilon^\alpha\left(\frac{L}{\varepsilon}\right)^{\frac{1}{p}}\left|\left|\partial_{x}\varphi\right|\right|_{L^p(R^\varepsilon)}\\
				\left|\left|\partial_{y_2}T_\varepsilon(\varphi)\right|\right|_{L^p\left((0,1)\times Y^*\right)}=\varepsilon\left(\frac{L}{\varepsilon}\right)^{\frac{1}{p}}\left|\left|\partial_{y}\varphi\right|\right|_{L^p(R_0^\varepsilon)}\leq \varepsilon\left(\frac{L}{\varepsilon}\right)^{\frac{1}{p}}\left|\left|\partial_{y}\varphi\right|\right|_{L^p(R^\varepsilon)}.
			\end{gathered}
			$$
			If $p=\infty$,
			\begin{eqnarray*}
				&&\left|\left|\partial_{y_1}T_\varepsilon(\varphi)\right|\right|_{L^\infty\left((0,1)\times Y^*\right)}=\varepsilon^\alpha\left|\left|\partial_{x}\varphi\right|\right|_{L^\infty(R_0^\varepsilon)}\leq \varepsilon^\alpha\left|\left|\partial_{x}\varphi\right|\right|_{L^\infty(R^\varepsilon)}\\
				&&\left|\left|\partial_{y_2}T_\varepsilon(\varphi)\right|\right|_{L^\infty\left((0,1)\times Y^*\right)}=\varepsilon\left|\left|\partial_{y}\varphi\right|\right|_{L^\infty(R_0^\varepsilon)}\leq \varepsilon\left|\left|\partial_{y}\varphi\right|\right|_{L^\infty(R^\varepsilon)}.
			\end{eqnarray*}
		\end{enumerate}
	\end{proposition}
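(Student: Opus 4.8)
The plan is to establish the eight items in the order listed, since each later one rests on the earlier ones. Properties (1)--(4) are read off directly from the definition: on $I_\varepsilon\times Y^*$ the operator $T_\varepsilon$ is precomposition of $\varphi$ with the affine map $\Phi_\varepsilon(x,y_1,y_2)=\bigl(\varepsilon^\alpha[x/\varepsilon^\alpha]_L L+\varepsilon^\alpha y_1,\ \varepsilon y_2\bigr)$, followed by zero extension on $\Lambda_\varepsilon\times Y^*$, and both operations respect sums and products, giving linearity and $T_\varepsilon(\varphi\psi)=T_\varepsilon(\varphi)T_\varepsilon(\psi)$. For (3) one substitutes $y_1=\{x/\varepsilon^\alpha\}_L$, $y_2=y/\varepsilon$ and uses the identity $x=\varepsilon^\alpha[x/\varepsilon^\alpha]_L L+\varepsilon^\alpha\{x/\varepsilon^\alpha\}_L$. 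For (4) one plugs $\varphi^\varepsilon(x,y)=\varphi(x/\varepsilon^\alpha,y/\varepsilon)$ into the definition and observes that the argument becomes $\bigl([x/\varepsilon^\alpha]_L L+y_1,\ y_2\bigr)$, so the shift by the integer multiple $[x/\varepsilon^\alpha]_L L$ drops out by the $L$-periodicity of $\varphi$ in its first variable; measurability of $\varphi^\varepsilon$ is that of a measurable function composed with a continuous (affine) one.

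The heart of the proposition is the integration formula (5), from which (6) and (8) are essentially free. I would split $(0,1)=I_\varepsilon\cup\Lambda_\varepsilon$ up to a null set; on $\Lambda_\varepsilon\times Y^*$ the integrand vanishes. On $I_\varepsilon=\bigcup_{k=0}^{N_\varepsilon}(kL\varepsilon^\alpha,(k+1)L\varepsilon^\alpha)$ one has $[x/\varepsilon^\alpha]_L=k$ throughout the $k$-th subinterval, so $T_\varepsilon(\varphi)(x,y_1,y_2)=\varphi(\varepsilon^\alpha kL+\varepsilon^\alpha y_1,\varepsilon y_2)$ there, independent of $x$; integrating out $x$ produces a factor $L\varepsilon^\alpha$. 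The affine change of variables $s=\varepsilon^\alpha(kL+y_1)$, $t=\varepsilon y_2$, with Jacobian $\varepsilon^{1+\alpha}$, carries $Y^*$ bijectively — here the $L$-periodicity of $g$ is used to identify $g(y_1)$ with $g(s/\varepsilon^\alpha)$ — onto the slab $R^\varepsilon\cap\{kL\varepsilon^\alpha<x<(k+1)L\varepsilon^\alpha\}$. Combining the factors $L\varepsilon^\alpha$ and $\varepsilon^{-(1+\alpha)}$ gives $L/\varepsilon$, and summing over $k$, whose slabs exhaust $R^\varepsilon_0$, yields $\int_{(0,1)\times Y^*}T_\varepsilon(\varphi)=\tfrac{L}{\varepsilon}\int_{R^\varepsilon_0}\varphi$; the final equality in (5) is the disjoint decomposition $\int_{R^\varepsilon}=\int_{R^\varepsilon_0}+\int_{R^\varepsilon_1}$.

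The remaining properties are corollaries. For (6) with $1\le p<\infty$, apply (5) to $|\varphi|^p$ and use the pointwise identity $T_\varepsilon(|\varphi|^p)=|T_\varepsilon(\varphi)|^p$ (immediate from $T_\varepsilon\psi=\psi\circ\Phi_\varepsilon$, or from (2)); the inequality follows since $R^\varepsilon_0\subset R^\varepsilon$. For the $L^\infty$ statement, $T_\varepsilon(\varphi)$ takes only values attained by $\varphi$ on $R^\varepsilon_0$ (plus $0$), and (5) applied to $\chi_A$ shows that a set $A\subset R^\varepsilon_0$ of positive measure has preimage of positive measure under the unfolding map, so the two essential suprema coincide. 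For the derivative identities (7), for smooth $\varphi$ they are the chain rule applied to $\Phi_\varepsilon$; in general, for a.e.\ fixed $x$ the slice $(y_1,y_2)\mapsto T_\varepsilon(\varphi)(x,y_1,y_2)$ is $\varphi$ precomposed with the affine bijection of $Y^*$ onto the corresponding slab, which transforms weak $y_1,y_2$-derivatives by the chain rule, giving $\partial_{y_1}T_\varepsilon(\varphi)=\varepsilon^\alpha T_\varepsilon(\partial_x\varphi)$ and $\partial_{y_2}T_\varepsilon(\varphi)=\varepsilon T_\varepsilon(\partial_y\varphi)$ a.e.\ (note $T_\varepsilon(\varphi)$ is genuinely discontinuous in $x$ at the points $kL\varepsilon^\alpha$, so the statement concerns only the $y_1,y_2$ derivatives and $T_\varepsilon(\varphi)\notin W^{1,p}$ of the full cylinder). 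Finally (8): membership in $L^p\bigl((0,1);W^{1,p}(Y^*)\bigr)$ follows from (6) applied to $\varphi$, $\partial_x\varphi$, $\partial_y\varphi$, and the norm identities follow by inserting (7) into (6).

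The step that needs the most care is the integration formula (5): keeping $I_\varepsilon$, $\Lambda_\varepsilon$, $R^\varepsilon_0$, $R^\varepsilon_1$ straight, checking that the affine change of variables maps $Y^*$ onto precisely the right slab of $R^\varepsilon_0$ (where the $L$-periodicity of $g$ is essential), and tracking the scaling factors so that $L\varepsilon^\alpha$ from the $x$-integration and $\varepsilon^{-(1+\alpha)}$ from the Jacobian combine to give exactly $L/\varepsilon$. The only other slightly delicate points are the equality (not merely inequality) of $L^\infty$ norms and the slice-wise reading of (7); once the measure-theoretic bookkeeping behind (5) is in place, both are routine.
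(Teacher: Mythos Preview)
Your proof is correct and follows the standard direct-verification route. The paper itself does not prove this proposition: it states the properties and defers to the references \cite{AM,AM2} with the remark ``For proofs and details, see \cite{AM,AM2}.'' Your argument---reading (1)--(4) off the definition of $T_\varepsilon$ as precomposition with the affine map $\Phi_\varepsilon$, establishing the integration formula (5) by splitting $I_\varepsilon$ into the subintervals $(kL\varepsilon^\alpha,(k+1)L\varepsilon^\alpha)$ and performing the affine change of variables with Jacobian $\varepsilon^{1+\alpha}$, and then deducing (6)--(8) as corollaries---is exactly the approach taken in those references, so there is nothing to contrast.
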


		 Notice that, due to the order of the height of the thin domain the factor $1/\varepsilon$ appears in properties 5 and 6. Then, it makes sense to consider the following rescaled Lebesgue measure in the thin domains
		$$
		\rho_\varepsilon(\mathcal{O})=\dfrac{1}{\varepsilon}|\mathcal{O}|\mbox{, }\forall \mathcal{O}\subset R^\varepsilon,
		$$
		which is widely considered in works involving thin domains. As a matter of fact, from now on, we use the following rescaled norms in the thin open sets
		\begin{eqnarray*}
			&&\left|\left|\left|\varphi\right|\right|\right|_{L^p(R^\varepsilon)}=\varepsilon^{-1/p}\left|\left|\varphi\right|\right|_{L^p(R^\varepsilon)},\forall\varphi\in L^p(R^\varepsilon), 1\leq p< \infty,\\
			&&\left|\left|\left|\varphi\right|\right|\right|_{W^{1,p}(R^\varepsilon)}=\varepsilon^{-1/p}\left|\left|\varphi\right|\right|_{W^{1,p}(R^\varepsilon)}\forall\varphi\in W^{1,p}(R^\varepsilon), 1\leq p< \infty.
		\end{eqnarray*} 
		For completeness we may denote $\left|\left|\left|\varphi\right|\right|\right|_{L^\infty(R^\varepsilon)}=\left|\left|\varphi\right|\right|_{L^\infty(R^\varepsilon)}$.
		
		 From property 6, we have
		$$
		\left|\left|T_\varepsilon(\varphi)\right|\right|_{L^p\left((0,1)\times Y^*\right)}\leq L^{1/p}\left|\left|\left|\varphi\right|\right|\right|_{L^p(R^\varepsilon)}, \quad 1\leq p< \infty.
		$$
		
		 Property 5 of Proposition \ref{unfoldingproperties} will be essential to pass to the limit when dealing with solutions of differential equations because it will allow us to transform any integral over the thin domain (which depends on the parameter $\varepsilon$) into an integral over the fixed set $(0, 1) \times Y^*$. Notice that, in view of this property, we may say that the unfolding operator “almost preserves” the integral of the functions since the “integration defect” arises only from the unique cell which is not completely included in $R^\varepsilon$ and it is controlled by the integral on $R_1^\varepsilon$. 
		 
		 Therefore, an important concept for the unfolding method is the following property called unfolding criterion for integrals (u.c.i.).
	
	\begin{definition}
		A sequence $\left(\varphi_\varepsilon\right)$ satisfies the unfolding criterion for integrals (u.c.i) if
		\begin{equation*}
		\frac{1}{\varepsilon}\int_{R_1^\varepsilon}|\varphi_\varepsilon|dxdy\rightarrow 0.
		\end{equation*}
	\end{definition}
	
	\begin{proposition}
		Let $\left(\varphi_\varepsilon\right)$ be a sequence in $L^p(R^\varepsilon)$, $1<p\leq\infty$ with the norm $\left|\left|\left|\varphi_\varepsilon\right|\right|\right|_{L^p(R^\varepsilon)}$ uniformly bounded. 
		Then, $\left(\varphi_\varepsilon\right)$ satisfies the (u.c.i). 
		
		Furthermore, let $\left(\psi_\varepsilon\right)$ be a sequence in $L^{q}(R^\varepsilon)$, also with $\left|\left|\left|\psi_\varepsilon\right|\right|\right|_{L^{q}(R^\varepsilon)}$ uniformly bounded, $\frac{1}{p}+\frac{1}{q}=\frac{1}{r}$, with $r>1$. 
		Then, the product sequence $(\varphi_\varepsilon\psi_\varepsilon)$ satisfies (u.c.i).
	
		If we still take $\phi\in L^{p'}(0,1)$, then, the sequence $\varphi_\varepsilon\phi$ satifies (u.c.i).
	\end{proposition}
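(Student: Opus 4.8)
The plan is to reduce all three statements to a single elementary fact about the size of the ``defect cell'' $R_1^\varepsilon$, combined with Hölder's inequality and some bookkeeping of the powers of $\varepsilon$ carried by the rescaled norms $|||\cdot|||$. First I would record the geometric estimate: since $N_\varepsilon$ is the largest integer with $\varepsilon^\alpha L(N_\varepsilon+1)\le 1$, the leftover interval satisfies $|\Lambda_\varepsilon|=1-\varepsilon^\alpha L(N_\varepsilon+1)<\varepsilon^\alpha L$, and since $R_1^\varepsilon\subset R^\varepsilon$ has height at most $\varepsilon g_1$, we get $|R_1^\varepsilon|\le g_1L\,\varepsilon^{1+\alpha}$, hence $\varepsilon^{-1}|R_1^\varepsilon|\le g_1L\,\varepsilon^{\alpha}\to 0$. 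This is the only geometric input needed.

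For the first assertion I would apply Hölder on $R_1^\varepsilon$ with exponents $p$ and $p'$ (the case $p=\infty$ being even simpler, with $|R_1^\varepsilon|$ in place of $|R_1^\varepsilon|^{1/p'}$), bound $\|\varphi_\varepsilon\|_{L^p(R_1^\varepsilon)}\le\|\varphi_\varepsilon\|_{L^p(R^\varepsilon)}=\varepsilon^{1/p}\,|||\varphi_\varepsilon|||_{L^p(R^\varepsilon)}\le C\varepsilon^{1/p}$, and insert the measure estimate $|R_1^\varepsilon|^{1/p'}\le (g_1L)^{1/p'}\varepsilon^{(1+\alpha)/p'}$. The resulting power of $\varepsilon$ is $-1+\tfrac1p+\tfrac{1+\alpha}{p'}=\tfrac{\alpha}{p'}>0$, so the quantity $\varepsilon^{-1}\int_{R_1^\varepsilon}|\varphi_\varepsilon|\,dxdy$ tends to $0$, which is exactly (u.c.i.).

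For the product $(\varphi_\varepsilon\psi_\varepsilon)$ I would first note, by Hölder in $R^\varepsilon$ and the relation $\tfrac1r=\tfrac1p+\tfrac1q$, that the rescaled norms satisfy $|||\varphi_\varepsilon\psi_\varepsilon|||_{L^r(R^\varepsilon)}\le \varepsilon^{-1/r+1/p+1/q}\,|||\varphi_\varepsilon|||_{L^p(R^\varepsilon)}\,|||\psi_\varepsilon|||_{L^q(R^\varepsilon)}=|||\varphi_\varepsilon|||_{L^p(R^\varepsilon)}\,|||\psi_\varepsilon|||_{L^q(R^\varepsilon)}$, the powers of $\varepsilon$ cancelling exactly. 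Hence $(\varphi_\varepsilon\psi_\varepsilon)$ is a sequence in $L^r(R^\varepsilon)$ with $r>1$ and uniformly bounded rescaled $L^r$-norm, so the first part, applied with $r$ in the role of $p$, gives that it satisfies (u.c.i.).

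The last statement — $\varphi_\varepsilon\phi$ with $\phi\in L^{p'}(0,1)$ — is the one point where I would be careful, since it is \emph{not} a corollary of the product case: here $\tfrac1p+\tfrac1{p'}=1$ forces the exponent $r=1$, outside the admissible range $r>1$, so no equi-integrability comes for free and one must use instead that $\phi$ is a fixed function. Writing $\phi=\phi(x)$ and integrating out $y$ gives $\|\phi\|_{L^{p'}(R_1^\varepsilon)}^{p'}=\int_{\Lambda_\varepsilon}|\phi(x)|^{p'}\varepsilon g(x/\varepsilon^\alpha)\,dx\le \varepsilon g_1\,\|\phi\|_{L^{p'}(\Lambda_\varepsilon)}^{p'}$; feeding this together with $\|\varphi_\varepsilon\|_{L^p(R_1^\varepsilon)}\le C\varepsilon^{1/p}$ into Hölder on $R_1^\varepsilon$ yields $\varepsilon^{-1}\int_{R_1^\varepsilon}|\varphi_\varepsilon\phi|\,dxdy\le C'\|\phi\|_{L^{p'}(\Lambda_\varepsilon)}$. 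Since $|\Lambda_\varepsilon|\le\varepsilon^\alpha L\to 0$ and $\phi\in L^{p'}(0,1)$, absolute continuity of the Lebesgue integral gives $\|\phi\|_{L^{p'}(\Lambda_\varepsilon)}\to 0$, and the conclusion follows (the case $p=\infty$, $p'=1$ being identical). I expect the only real obstacle to be precisely this recognition that the third item must be handled by an equi-integrability argument rather than reduced to the second; everything else is routine.
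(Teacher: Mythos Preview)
Your argument is correct in all three parts. The measure estimate $|R_1^\varepsilon|\le g_1L\,\varepsilon^{1+\alpha}$ together with H\"older gives the right power $\varepsilon^{\alpha/p'}$ for the first claim; the reduction of the product case to the first via the cancellation $-\tfrac1r+\tfrac1p+\tfrac1q=0$ is clean; and your handling of the third item is exactly the right observation---since $\tfrac1p+\tfrac1{p'}=1$ pushes the product into $L^1$, one cannot invoke the second part and must instead exploit that $\phi$ is a \emph{fixed} function so that $\|\phi\|_{L^{p'}(\Lambda_\varepsilon)}\to 0$ by absolute continuity.

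As for comparison with the paper: this proposition is stated there without proof, as part of the background on the unfolding method recalled from \cite{AM,AM2}. So there is no argument in the present paper to compare yours against. Your write-up is self-contained and would serve perfectly well as the omitted proof; the only cosmetic suggestion is that in the paper's setting the third claim is used only for test functions $\phi\in W^{1,p}(0,1)\subset L^\infty(0,1)$, where your absolute-continuity step could be replaced by the cruder bound $\|\phi\|_{L^{p'}(\Lambda_\varepsilon)}\le\|\phi\|_{L^\infty}|\Lambda_\varepsilon|^{1/p'}$---but your version is the correct one for the proposition as stated.
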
	

	\begin{proposition}
		Let $\left(\varphi_\varepsilon\right)$ be a sequence in $L^p(R^\varepsilon)$, $1<p<\infty$ with $\left|\left|\left|\varphi_\varepsilon\right|\right|\right|_{L^p(R^\varepsilon)}$ uniformly bounded and let $\left(\psi_\varepsilon\right)$ be a sequence in $L^{p'}(R^\varepsilon)$ set as follows
		\begin{equation*}
		\psi_\varepsilon(x,y)=\psi\left(\frac{x}{\varepsilon^\alpha},\frac{y}{\varepsilon}\right),
		\end{equation*}
		where $\psi\in L^{p'}(Y^*)$. Then, $(\varphi_\varepsilon\psi_\varepsilon)$ satisfies (u.c.i).
	\end{proposition}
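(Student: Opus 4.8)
The plan is to estimate the integration defect $\frac1\varepsilon\int_{R_1^\varepsilon}|\varphi_\varepsilon\psi_\varepsilon|\,dxdy$ directly via H\"older's inequality, splitting the two factors and exploiting the explicit oscillatory structure of $\psi_\varepsilon$. Observe first that the preceding proposition cannot be invoked here: with $q=p'$ the exponent relation $\frac1p+\frac1q=\frac1r$ forces $r=1$, which is excluded, so the special form $\psi_\varepsilon(x,y)=\psi(x/\varepsilon^\alpha,y/\varepsilon)$ must enter in an essential way. Applying H\"older on $R_1^\varepsilon$ and then enlarging the domain of $\varphi_\varepsilon$,
\[
\frac1\varepsilon\int_{R_1^\varepsilon}|\varphi_\varepsilon\psi_\varepsilon|\,dxdy \;\le\; \frac1\varepsilon\,\|\varphi_\varepsilon\|_{L^p(R_1^\varepsilon)}\,\|\psi_\varepsilon\|_{L^{p'}(R_1^\varepsilon)} \;\le\; \frac1\varepsilon\,\|\varphi_\varepsilon\|_{L^p(R^\varepsilon)}\,\|\psi_\varepsilon\|_{L^{p'}(R_1^\varepsilon)},
\]
and since $\|\varphi_\varepsilon\|_{L^p(R^\varepsilon)}=\varepsilon^{1/p}\,|||\varphi_\varepsilon|||_{L^p(R^\varepsilon)}\le C\,\varepsilon^{1/p}$ by hypothesis, the whole matter reduces to showing that $\|\psi_\varepsilon\|_{L^{p'}(R_1^\varepsilon)}$ is smaller than $\varepsilon^{1/p'}$ by a positive power of $\varepsilon$.

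To obtain that gain I would compute $\int_{R_1^\varepsilon}|\psi_\varepsilon|^{p'}\,dxdy$ through the change of variables $y_1=x/\varepsilon^\alpha$, $y_2=y/\varepsilon$, which carries the set $\Lambda_\varepsilon\times(0,\varepsilon\,g(x/\varepsilon^\alpha))$ onto $J_\varepsilon\times(0,g(y_1))$ with $J_\varepsilon=[L(N_\varepsilon+1),\varepsilon^{-\alpha})$ and yields the Jacobian factor $\varepsilon^{\alpha+1}$. The decisive point is that $J_\varepsilon$ is an interval whose left endpoint is an integer multiple of $L$ and whose length $\varepsilon^{-\alpha}-L(N_\varepsilon+1)$ is strictly less than $L$ by the very definition of $N_\varepsilon$; hence the $L$-periodicity of $\psi$ in $y_1$ gives
\[
\int_{J_\varepsilon}\!\int_0^{g(y_1)}|\psi(y_1,y_2)|^{p'}\,dy_2\,dy_1 \;\le\; \int_{Y^*}|\psi|^{p'}\,dy_1\,dy_2 \;=\; \|\psi\|_{L^{p'}(Y^*)}^{p'},
\]
so that $\|\psi_\varepsilon\|_{L^{p'}(R_1^\varepsilon)}^{p'}\le \varepsilon^{\alpha+1}\,\|\psi\|_{L^{p'}(Y^*)}^{p'}$. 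Substituting back and using $\frac1p+\frac1{p'}=1$ one arrives at
\[
\frac1\varepsilon\int_{R_1^\varepsilon}|\varphi_\varepsilon\psi_\varepsilon|\,dxdy \;\le\; C\,\|\psi\|_{L^{p'}(Y^*)}\;\varepsilon^{\,\frac1p+\frac{\alpha+1}{p'}-1} \;=\; C\,\|\psi\|_{L^{p'}(Y^*)}\;\varepsilon^{\alpha/p'},
\]
which tends to $0$ as $\varepsilon\to 0$ since $\alpha>0$; this is precisely the (u.c.i.) for $(\varphi_\varepsilon\psi_\varepsilon)$.

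The only delicate step is the bookkeeping for $R_1^\varepsilon$ under the rescaling $x\mapsto x/\varepsilon^\alpha$ --- namely verifying that $\Lambda_\varepsilon$ is mapped to an interval of length strictly below $L$ starting at a multiple of $L$, so that the periodic extension of $\psi$ contributes at most a single representative cell $Y^*$. This is exactly the mechanism producing the extra factor $\varepsilon^\alpha$ that rescues the borderline exponent case $r=1$, where the general product criterion fails; everything else --- the H\"older step, the rescaled-norm bound on $\varphi_\varepsilon$, and the arithmetic of the exponents --- is routine.
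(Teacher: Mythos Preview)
The paper states this proposition without proof; it is recalled among the basic facts on the unfolding operator with an implicit reference to \cite{AM,AM2}. So there is no in-paper argument to compare with.

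Your argument is correct. The H\"older step and the rescaled-norm bookkeeping are fine, and the key computation
\[
\|\psi_\varepsilon\|_{L^{p'}(R_1^\varepsilon)}^{p'}=\varepsilon^{\alpha+1}\int_{J_\varepsilon}\!\int_0^{g(y_1)}|\psi(y_1,y_2)|^{p'}\,dy_2\,dy_1\le \varepsilon^{\alpha+1}\,\|\psi\|_{L^{p'}(Y^*)}^{p'}
\]
is exactly right: since $N_\varepsilon$ is the largest integer with $\varepsilon^\alpha L(N_\varepsilon+1)\le 1$, one has $L(N_\varepsilon+1)\le \varepsilon^{-\alpha}<L(N_\varepsilon+2)$, so $J_\varepsilon=[L(N_\varepsilon+1),\varepsilon^{-\alpha})$ has length strictly below $L$ and starts at a multiple of $L$, whence the periodicity bound. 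The exponent arithmetic $\tfrac1p+\tfrac{\alpha+1}{p'}-1=\tfrac{\alpha}{p'}>0$ then gives the (u.c.i.). Your remark that the previous product criterion does not apply (it would require $r>1$ whereas here $r=1$) is also accurate and identifies precisely why the oscillatory structure of $\psi_\varepsilon$ must be used.
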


	Now, let us recall some convergence properties of the unfolding operator as $\varepsilon$ goes to zero.
	
	\begin{theorem}\label{convergencetheorem}
		For a measurable function $f$ on $Y^*$, $L$-periodic in its first variable and extended by periodicity to $\left\{ (x,y)\in \mathbb{R}^2:x\in \mathbb{R}, 0<y<g(x)\right\}$,  define the sequence $(f_\varepsilon)$ by
		\begin{equation*}
		f^\varepsilon(x,y)=f\left(\frac{x}{\varepsilon^\alpha},\frac{y}{\varepsilon}\right)
		\end{equation*}
		a.e. for 
		$$(x,y)\in\left\lbrace(x,y)\in \mathbb{R}^2:x\in \mathbb{R}, 0<y<\varepsilon g\left(\frac{x}{\varepsilon^\alpha}\right)\right\rbrace.$$
		
		Then 
		\begin{eqnarray*}
		T_\varepsilon f^\varepsilon|_{(0,1)}(x,y_1,y_2)=\left\{\begin{array}{ll}
		f(y_1,y_2)\mbox{, for }(x,y_1,y_2)\in I_\varepsilon\times Y^*,\\
		0\mbox{, for }(x,y_1,y_2)\in \Lambda_\varepsilon\times Y^*.
		\end{array}\right.
		\end{eqnarray*}
		
		Moreover, if $f\in L^p_\#(Y^*)$, then 
		\begin{equation*}
		T_\varepsilon f^\varepsilon\rightarrow f
		\end{equation*}
		strongly in $L^p_\#\left((0,1)\times Y^*\right)$.
	\end{theorem}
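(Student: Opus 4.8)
The plan is to derive the pointwise identity for $T_\varepsilon f^\varepsilon$ directly from the definition of the unfolding operator, and then to obtain the strong convergence from an \emph{exact} evaluation of $\|T_\varepsilon f^\varepsilon-f\|_{L^p((0,1)\times Y^*)}^p$, so that no approximation of $f$ by smooth functions is needed.

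\emph{Pointwise formula.} For $(x,y_1,y_2)\in I_\varepsilon\times Y^*$, the definition of $T_\varepsilon$ together with $f^\varepsilon(a,b)=f(a/\varepsilon^\alpha,b/\varepsilon)$ gives
\[
T_\varepsilon f^\varepsilon(x,y_1,y_2)=f^\varepsilon\!\left(\varepsilon^\alpha\Big[\tfrac{x}{\varepsilon^\alpha}\Big]_LL+\varepsilon^\alpha y_1,\ \varepsilon y_2\right)=f\!\left(\Big[\tfrac{x}{\varepsilon^\alpha}\Big]_LL+y_1,\ y_2\right)=f(y_1,y_2),
\]
where the scalings $\varepsilon^\alpha$ and $\varepsilon$ cancel in the second equality, and the third uses that $[x/\varepsilon^\alpha]_L\in\mathbb{Z}$ and that $f$ is $L$-periodic in its first variable. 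Before writing this I would check that the evaluation point lies in the region $\{0<y<\varepsilon g(x/\varepsilon^\alpha)\}$ on which $f^\varepsilon$ is defined: since $(y_1,y_2)\in Y^*$ we have $0<y_2<g(y_1)$, and by $L$-periodicity of $g$ this is the same as $0<\varepsilon y_2<\varepsilon g\big([x/\varepsilon^\alpha]_LL+y_1\big)$. This identity is precisely property 4 of Proposition \ref{unfoldingproperties} read on $(0,1)$; on $\Lambda_\varepsilon\times Y^*$ the value $0$ is immediate from the definition of $T_\varepsilon$.

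\emph{Strong convergence.} Since $f$ depends only on $(y_1,y_2)$, the previous step shows that $T_\varepsilon f^\varepsilon-f$ vanishes a.e.\ on $I_\varepsilon\times Y^*$ and equals $-f(y_1,y_2)$ on $\Lambda_\varepsilon\times Y^*$. As $(0,1)$ is the disjoint union of $I_\varepsilon$ and $\Lambda_\varepsilon$,
\[
\big\|T_\varepsilon f^\varepsilon-f\big\|_{L^p((0,1)\times Y^*)}^p=\int_{\Lambda_\varepsilon\times Y^*}|f(y_1,y_2)|^p\,dx\,dy_1\,dy_2=|\Lambda_\varepsilon|\;\|f\|_{L^p(Y^*)}^p.
\]
By the choice of $N_\varepsilon$ (the largest integer with $\varepsilon^\alpha L(N_\varepsilon+1)\le1$) we have $\varepsilon^\alpha L(N_\varepsilon+2)>1$, hence $|\Lambda_\varepsilon|=1-\varepsilon^\alpha L(N_\varepsilon+1)<\varepsilon^\alpha L\to0$ as $\varepsilon\to0$ (and $|\Lambda_\varepsilon|=0$ if $\Lambda_\varepsilon=\emptyset$). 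Since $f\in L^p_\#(Y^*)\subset L^p(Y^*)$, the factor $\|f\|_{L^p(Y^*)}^p$ is finite, so the right-hand side tends to $0$; and $T_\varepsilon f^\varepsilon$ belongs to $L^p_\#((0,1)\times Y^*)$ because it coincides with the $L$-periodic-in-$y_1$ function $f$ on $I_\varepsilon\times Y^*$ and vanishes on $\Lambda_\varepsilon\times Y^*$.

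The argument is entirely elementary, and the only two points that need a little care are purely bookkeeping: (i) checking that the point at which $f^\varepsilon$ is evaluated really lies in its domain of definition — which is exactly where the $L$-periodicity of $g$ enters — and (ii) controlling the single incomplete cell through the sets $I_\varepsilon,\Lambda_\varepsilon$ and the integer $N_\varepsilon$. There is no genuine analytic obstacle, since the $L^p$-error reduces exactly to $|\Lambda_\varepsilon|\,\|f\|_{L^p(Y^*)}^p$.
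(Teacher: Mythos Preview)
Your proof is correct. Note, however, that the paper does not actually supply its own proof of this theorem: it is one of the properties of the unfolding operator that the paper \emph{recalls} from \cite{AM,AM2} (see the sentence ``For proofs and details, see \cite{AM,AM2}'' preceding the definition of $T_\varepsilon$). So there is nothing in the paper to compare your argument against beyond the statement itself and Proposition~\ref{unfoldingproperties}.

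That said, your route is precisely the natural one and matches what the cited references do: the pointwise identity is exactly property~4 of Proposition~\ref{unfoldingproperties} (which you re-derive from scratch, including the domain check via the $L$-periodicity of $g$), and your exact computation $\|T_\varepsilon f^\varepsilon-f\|_{L^p((0,1)\times Y^*)}^p=|\Lambda_\varepsilon|\,\|f\|_{L^p(Y^*)}^p$ together with $|\Lambda_\varepsilon|<\varepsilon^\alpha L\to0$ is the cleanest way to get the strong convergence, with no need for density or approximation arguments.
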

	\begin{proposition}\label{convergenceplaplacetype}
		Let $f\in L^p\left((0,1);L^p_\#(Y^*)\right)$ and extend it periodically in $y_1$-direction defining 
		\begin{equation} \label{eq448}
		f^\varepsilon(x,y):=f\left(x,\frac{x}{\varepsilon^\alpha},\frac{y}{\varepsilon}\right)\in L^p(R^\varepsilon).
		\end{equation}
		Then, 
		\begin{equation*}
		T_\varepsilon f^\varepsilon\rightarrow f \mbox{ strongly in }L^p\left((0,1)\times Y^*\right).
		\end{equation*}
	\end{proposition}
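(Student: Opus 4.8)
The plan is to prove the statement by a density argument: first handle functions with separated variables, where the recorded properties of $T_\varepsilon$ apply directly, and then pass to a general $f$ by approximation.

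First I would fix a convenient dense class. The subspace $\mathcal{T}$ of finite sums $\sum_{j=1}^{N}\varphi_j(x)\psi_j(y_1,y_2)$ with $\varphi_j\in C([0,1])$ and $\psi_j\in L^p_\#(Y^*)$ is dense in $L^p((0,1);L^p_\#(Y^*))$, because tensor simple functions $\sum\mathbf{1}_{A_j}(x)\xi_j$ are dense in any Bochner space $L^p((0,1);X)$ and each characteristic function $\mathbf{1}_{A_j}$ is an $L^p(0,1)$-limit of continuous functions. I would also record, for later use, the change-of-variables identity (substitute $y=\varepsilon y_2$ and use the $L$-periodicity of $\phi$ in $y_1$)
\begin{equation*}
\frac1\varepsilon\,\|\phi^\varepsilon\|_{L^p(R^\varepsilon)}^p=\int_0^1\int_0^{g(\{x/\varepsilon^\alpha\}_L)}\big|\phi\big(x,\{x/\varepsilon^\alpha\}_L,y_2\big)\big|^p\,dy_2\,dx ,\qquad \phi\in L^p\big((0,1);L^p_\#(Y^*)\big),
\end{equation*}
valid whenever $\phi^\varepsilon\in L^p(R^\varepsilon)$, together with the bound $\|T_\varepsilon\phi^\varepsilon\|_{L^p((0,1)\times Y^*)}\le L^{1/p}\varepsilon^{-1/p}\|\phi^\varepsilon\|_{L^p(R^\varepsilon)}$ that follows from property 6 of Proposition~\ref{unfoldingproperties}.

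Next I would prove the claim for a single product $f(x,y_1,y_2)=\varphi(x)\psi(y_1,y_2)$, with $\varphi\in C([0,1])$ and $\psi\in L^p_\#(Y^*)$. Here $f^\varepsilon=\varphi\,\psi^\varepsilon$ with $\psi^\varepsilon(x,y)=\psi(x/\varepsilon^\alpha,y/\varepsilon)$, so by property 2 of Proposition~\ref{unfoldingproperties} one has $T_\varepsilon f^\varepsilon=T_\varepsilon(\varphi)\,T_\varepsilon(\psi^\varepsilon)$, reading $\varphi$ as a $y$-independent function on $R^\varepsilon$. By Theorem~\ref{convergencetheorem}, $T_\varepsilon(\psi^\varepsilon)\to\psi$ strongly in $L^p_\#((0,1)\times Y^*)$, with $\|T_\varepsilon(\psi^\varepsilon)\|_{L^p}\le\|\psi\|_{L^p(Y^*)}$. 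For the other factor, $T_\varepsilon(\varphi)(x,y_1,y_2)=\varphi\big(\varepsilon^\alpha[x/\varepsilon^\alpha]_LL+\varepsilon^\alpha y_1\big)$ on $I_\varepsilon\times Y^*$ and $0$ on $\Lambda_\varepsilon\times Y^*$; since $|\varepsilon^\alpha[x/\varepsilon^\alpha]_LL+\varepsilon^\alpha y_1-x|\le L\varepsilon^\alpha\to0$ uniformly, $\varphi$ is uniformly continuous and $|\Lambda_\varepsilon|\to0$, I obtain $\|T_\varepsilon(\varphi)\|_{L^\infty}\le\|\varphi\|_{L^\infty}$, $T_\varepsilon(\varphi)\to\varphi$ pointwise a.e. in $(0,1)\times Y^*$, and hence $T_\varepsilon(\varphi)\to\varphi$ in $L^q((0,1)\times Y^*)$ for every finite $q$ by dominated convergence. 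Writing $T_\varepsilon f^\varepsilon-f=T_\varepsilon(\varphi)\big(T_\varepsilon(\psi^\varepsilon)-\psi\big)+\big(T_\varepsilon(\varphi)-\varphi\big)\psi$, I would bound the first summand in $L^p$ by $\|\varphi\|_{L^\infty}\|T_\varepsilon(\psi^\varepsilon)-\psi\|_{L^p}\to0$ and the second by dominated convergence (majorant $2\|\varphi\|_{L^\infty}|\psi|\in L^p((0,1)\times Y^*)$, pointwise limit $0$), obtaining $T_\varepsilon f^\varepsilon\to f$ strongly in $L^p((0,1)\times Y^*)$. By linearity of $T_\varepsilon$ this extends to every $f\in\mathcal{T}$.

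Finally I would complete the argument by density. Given $f$ and $\delta>0$, choose $\tilde f\in\mathcal{T}$ with $\|f-\tilde f\|_{L^p((0,1)\times Y^*)}<\delta$ and split $\|T_\varepsilon f^\varepsilon-f\|_{L^p}\le\|T_\varepsilon(f-\tilde f)^\varepsilon\|_{L^p}+\|T_\varepsilon\tilde f^\varepsilon-\tilde f\|_{L^p}+\|\tilde f-f\|_{L^p}$, the middle term tending to $0$ by the previous step and the last being $<\delta$. The hard part will be the remaining term: by the two facts recorded above it is at most $L^{1/p}\varepsilon^{-1/p}\|(f-\tilde f)^\varepsilon\|_{L^p(R^\varepsilon)}$, and I would aim to bound it, uniformly in $\varepsilon$, by $C\,\|f-\tilde f\|_{L^p((0,1)\times Y^*)}$ by applying Jensen's inequality cell by cell to the identity of the first step; this is the point that demands genuine care, since the $x$-oscillation at scale $\varepsilon^\alpha$ combined with the $g$-dependent vertical slicing of $Y^*$ produces a non-constant Jacobian, and the naive slicing estimate is not obviously uniform for completely arbitrary $L^p$ data — it is here that one leans on the mild regularity of $g$ and on the standing hypothesis $(f-\tilde f)^\varepsilon\in L^p(R^\varepsilon)$. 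Once that uniform estimate is secured, letting $\varepsilon\to0$ and then $\delta\to0$ yields $T_\varepsilon f^\varepsilon\to f$ strongly in $L^p((0,1)\times Y^*)$, everything else being a routine combination of Theorem~\ref{convergencetheorem} with the properties of $T_\varepsilon$.
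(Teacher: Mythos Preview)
Your approach is essentially the same as the paper's: the paper's entire proof is the single sentence ``It follows from Theorem~\ref{convergencetheorem} and the density of the tensor product $C_0^\infty(0,1)\otimes L^{p}_\#(Y^*)$ in $L^p\left((0,1);L^p_\#(Y^*)\right)$,'' and you have unpacked exactly that argument, correctly handling the tensor-product step via Theorem~\ref{convergencetheorem} and the multiplicativity of $T_\varepsilon$.

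You are right, however, to single out the third step as the delicate one, and your proposed fix does not close it. The obstruction is more fundamental than a Jacobian issue: for an arbitrary $h\in L^p((0,1)\times Y^*)$ the map $(x,y)\mapsto (x,\{x/\varepsilon^\alpha\}_L,y/\varepsilon)$ has image on a two-dimensional surface inside the three-dimensional set $(0,1)\times Y^*$, i.e.\ a Lebesgue-null set, so $h^\varepsilon$ is not even well defined as an $L^p$-class, and no ``Jensen cell by cell'' computation can produce a uniform bound $\|T_\varepsilon h^\varepsilon\|_{L^p}\le C\|h\|_{L^p((0,1)\times Y^*)}$. Concretely, one can place $H(s,y_1):=\int_0^{g(y_1)}|h(s,y_1,y_2)|^p\,dy_2$ on a thin neighbourhood of the graph $\{(s,\{s/\varepsilon_0^\alpha\}_L)\}$ to make $\|h\|_{L^p}$ arbitrarily small while $\int_0^1 H(s,\{s/\varepsilon_0^\alpha\}_L)\,ds$ stays of order one. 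The paper's one-line proof glosses over this as well; in practice the proposition is only invoked for data with extra structure (e.g.\ $\nabla_y u_1$ with $u_1\in L^p((0,1);W^{1,p}_\#(Y^*))$), where one can either approximate in a stronger norm such as $C([0,1];L^p_\#(Y^*))$, for which $(f-\tilde f)^\varepsilon$ is unambiguously defined with the needed uniform control, or argue via the trace regularity in $y_1$. Everything else in your write-up is correct.
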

	\begin{proof}
		It follows from Theorem \ref{convergencetheorem} and the density of the tensor product 
		$C_0^\infty(0,1)\otimes L^{p}_\#(Y^*)$ in $L^p\left((0,1);L^p_\#(Y^*)\right)$.
	\end{proof}
	
	\begin{remark} \label{re483} Using Proposition \ref{convergenceplaplacetype}, we also have that, if 
	$T_\varepsilon f^\varepsilon\rightarrow f$ strongly in $L^p\left((0,1)\times Y^*\right)$, then 
	\begin{equation}
	T_\varepsilon \left(|f^\varepsilon|^{p-2}f^\varepsilon\right)\rightarrow |f|^{p-2}f\mbox{ strongly in }L^{p'}\left((0,1)\times Y^*\right).
	\end{equation}
	In particular, we can take $f^\varepsilon$ as in \eqref{eq448}.
	\end{remark}
	
	\begin{proposition}
		Let $\varphi\in L^p(0,1)$, $1\leq p<\infty$. Then, considering $\varphi$ as a function defined in $R^\varepsilon$, we have
		\begin{equation*}
		T_\varepsilon\varphi\rightarrow \varphi\mbox{ strongly in }L^p\left((0,1)\times Y^*\right).
		\end{equation*}
	\end{proposition}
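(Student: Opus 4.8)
The cleanest route is to recognize the statement as a special case of Proposition~\ref{convergenceplaplacetype}. The plan is to take $f(x,y_1,y_2):=\varphi(x)$; being independent of $(y_1,y_2)$ it is trivially $L$-periodic in $y_1$ and it belongs to $L^p\left((0,1);L^p_\#(Y^*)\right)$ with $\|f\|_{L^p((0,1)\times Y^*)}=|Y^*|^{1/p}\|\varphi\|_{L^p(0,1)}$. With this choice the associated function $f^\varepsilon(x,y)=f(x,x/\varepsilon^\alpha,y/\varepsilon)=\varphi(x)$ is exactly $\varphi$ regarded as a function on $R^\varepsilon$, so Proposition~\ref{convergenceplaplacetype} yields $T_\varepsilon\varphi\to\varphi$ strongly in $L^p((0,1)\times Y^*)$ immediately.

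If one prefers a self-contained proof, I would argue by density. First I would record the uniform bound $\|T_\varepsilon\varphi\|_{L^p((0,1)\times Y^*)}\le (Lg_1)^{1/p}\|\varphi\|_{L^p(0,1)}$, which follows from property~6 of Proposition~\ref{unfoldingproperties} together with the elementary estimate $\|\varphi\|_{L^p(R^\varepsilon)}^p=\int_0^1|\varphi(x)|^p\,\varepsilon g(x/\varepsilon^\alpha)\,dx\le \varepsilon g_1\|\varphi\|_{L^p(0,1)}^p$; by linearity of $T_\varepsilon$ (property~1) this bound then controls $T_\varepsilon$ applied to differences.

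Next I would establish the convergence for $\varphi\in C([0,1])$. On $I_\varepsilon\times Y^*$ one has $T_\varepsilon\varphi(x,y_1,y_2)=\varphi(\xi_\varepsilon(x,y_1))$ with $\xi_\varepsilon(x,y_1)=\varepsilon^\alpha[x/\varepsilon^\alpha]_L L+\varepsilon^\alpha y_1$; since $x=\varepsilon^\alpha[x/\varepsilon^\alpha]_L L+\varepsilon^\alpha\{x/\varepsilon^\alpha\}_L$ with $\{x/\varepsilon^\alpha\}_L,y_1\in[0,L)$, we get $|\xi_\varepsilon(x,y_1)-x|<\varepsilon^\alpha L$, and moreover $\xi_\varepsilon(x,y_1)\in[0,1]$ by the very inequality $\varepsilon^\alpha L(N_\varepsilon+1)\le1$ defining $I_\varepsilon$. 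Uniform continuity of $\varphi$ then gives $\|T_\varepsilon\varphi-\varphi\|_{L^\infty(I_\varepsilon\times Y^*)}\le\omega_\varphi(\varepsilon^\alpha L)\to0$ (with $\omega_\varphi$ the modulus of continuity of $\varphi$), while on the remaining slab $\Lambda_\varepsilon\times Y^*$ one has $|T_\varepsilon\varphi-\varphi|\le\|\varphi\|_\infty$ on a set of measure $|\Lambda_\varepsilon||Y^*|\le\varepsilon^\alpha L|Y^*|\to0$; summing the two contributions gives $T_\varepsilon\varphi\to\varphi$ in $L^p((0,1)\times Y^*)$.

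Finally I would close by the usual $3\delta$-argument: given $\varphi\in L^p(0,1)$ pick $\psi\in C([0,1])$ with $\|\varphi-\psi\|_{L^p(0,1)}<\delta$ and bound $\|T_\varepsilon\varphi-\varphi\|_{L^p((0,1)\times Y^*)}$ by $\|T_\varepsilon(\varphi-\psi)\|_{L^p}+\|T_\varepsilon\psi-\psi\|_{L^p}+\|\psi-\varphi\|_{L^p((0,1)\times Y^*)}$, controlling the first term by the uniform bound, the second by the continuous case, and the third by $|Y^*|^{1/p}\delta$. There is no genuine obstacle here; the only point that deserves a moment's care is verifying that the rescaled argument $\xi_\varepsilon(x,y_1)$ stays in $[0,1]$, so that no extension of $\varphi$ beyond its domain is needed.
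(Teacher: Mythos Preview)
Your proposal is correct. The paper itself does not supply a proof for this proposition: it is merely recalled from \cite{AM,AM2} along with the surrounding convergence statements, so there is no ``paper's proof'' to compare against in detail.

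Your first route---viewing $\varphi$ as the function $f(x,y_1,y_2)=\varphi(x)\in L^p((0,1);L^p_\#(Y^*))$ so that $f^\varepsilon=\varphi$ and Proposition~\ref{convergenceplaplacetype} applies directly---is the most economical argument available within the paper's own framework, and is entirely valid. The self-contained density argument you sketch is also correct: the uniform operator bound, the pointwise estimate $|\xi_\varepsilon(x,y_1)-x|<\varepsilon^\alpha L$ with $\xi_\varepsilon(x,y_1)\in[0,1]$ for $x\in I_\varepsilon$, and the handling of the remainder $\Lambda_\varepsilon\times Y^*$ are all accurate. The point you flag about $\xi_\varepsilon$ remaining in $[0,1]$ is indeed the only place requiring care, and your justification via the defining inequality $\varepsilon^\alpha L(N_\varepsilon+1)\le1$ is exactly right.
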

	\begin{proposition}
		Let $(\varphi_\varepsilon)$ be a sequence in $L^p(0,1)$, $1\leq p<\infty$, such that 
		\begin{equation*}
		\varphi_\varepsilon\rightarrow \varphi\mbox{ strongly in }L^p(0,1).
		\end{equation*}
		Then, 
		\begin{equation*}
		T_\varepsilon\varphi_\varepsilon\rightarrow \varphi\mbox{ strongly in }L^p\left((0,1)\times Y^*\right).
		\end{equation*}
	\end{proposition}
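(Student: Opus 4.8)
The plan is to reduce everything to the convergence $T_\varepsilon\varphi\to\varphi$ for a \emph{fixed} limit function $\varphi\in L^p(0,1)$, which is exactly the content of the preceding proposition, by exploiting the linearity of the unfolding operator (property 1 of Proposition \ref{unfoldingproperties}) together with its uniform boundedness (property 6). Concretely, regarding $\varphi_\varepsilon$ and $\varphi$ as functions on $R^\varepsilon$ depending only on the variable $x$, I would write the splitting
\[
T_\varepsilon\varphi_\varepsilon-\varphi = T_\varepsilon(\varphi_\varepsilon-\varphi) + \bigl(T_\varepsilon\varphi-\varphi\bigr)
\]
in $L^p\left((0,1)\times Y^*\right)$. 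The second summand converges strongly to $0$ by the preceding proposition applied to the fixed function $\varphi$, so the whole matter reduces to estimating the first summand.

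For the first summand the key observation is that for any $\psi\in L^p(0,1)$, viewed as a $y$-independent function on $R^\varepsilon$,
\[
\|\psi\|_{L^p(R^\varepsilon)}^p=\int_0^1|\psi(x)|^p\,\varepsilon\,g\!\left(\frac{x}{\varepsilon^\alpha}\right)dx\le \varepsilon\,g_1\,\|\psi\|_{L^p(0,1)}^p,
\]
using ($\mathbf{H_g}$). Combining this with property 6 of Proposition \ref{unfoldingproperties} (equivalently, with the rescaled-norm bound $\|T_\varepsilon(\psi)\|_{L^p((0,1)\times Y^*)}\le L^{1/p}\,\|\hspace{-1pt}\|\psi\|\hspace{-1pt}\|_{L^p(R^\varepsilon)}$) yields
\[
\|T_\varepsilon(\varphi_\varepsilon-\varphi)\|_{L^p((0,1)\times Y^*)}\le \left(\frac{L}{\varepsilon}\right)^{1/p}\|\varphi_\varepsilon-\varphi\|_{L^p(R^\varepsilon)}\le (Lg_1)^{1/p}\,\|\varphi_\varepsilon-\varphi\|_{L^p(0,1)},
\]
and the right-hand side tends to $0$ by hypothesis. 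Adding the two estimates and invoking the preceding proposition for the second term gives $T_\varepsilon\varphi_\varepsilon\to\varphi$ strongly in $L^p\left((0,1)\times Y^*\right)$.

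There is no real obstacle in this argument; the only points deserving a word of care are that the splitting is legitimate because $T_\varepsilon$ is linear, that the restriction $p<\infty$ is what makes the $\varepsilon$-powers in the operator bound cancel exactly, and — crucially — that the resulting constant $(Lg_1)^{1/p}$ is independent of $\varepsilon$, which is precisely what turns the strong convergence $\varphi_\varepsilon\to\varphi$ in $L^p(0,1)$ into the strong convergence of the unfolded sequence.
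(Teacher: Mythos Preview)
Your argument is correct. The paper does not actually supply a proof of this proposition; it is listed among the basic facts recalled from \cite{AM,AM2}, so there is no ``paper's own proof'' to compare against. That said, the approach you give --- splitting $T_\varepsilon\varphi_\varepsilon-\varphi$ into $T_\varepsilon(\varphi_\varepsilon-\varphi)+(T_\varepsilon\varphi-\varphi)$, bounding the first term via property~6 of Proposition~\ref{unfoldingproperties} combined with the elementary estimate $\|\psi\|_{L^p(R^\varepsilon)}^p\le \varepsilon g_1\|\psi\|_{L^p(0,1)}^p$ for $y$-independent $\psi$, and handling the second term by the preceding proposition --- is precisely the standard one, and is how the result is established in the references.
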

	
	Next, we recall a convergence result which does not depend on the value of the parameter $\alpha$. To do that, we first introduce a suitable decomposition to functions $\varphi\in W^{1,p}(R^\varepsilon)$ where the geometry of the thin domains plays a crucial role. 
	We write $\varphi(x,y)=V(x)+\varphi_r(x,y)$ where $V$ is defined as follows
	\begin{equation}\label{Vauxiliar}
	V(x):=\frac{1}{\varepsilon g_0}\int_0^{\varepsilon g_0}\varphi(x,s) \, ds \quad \mbox{ a.e. } x\in (0,1).
	\end{equation}
	We set $\varphi_r(x,y)\equiv \varphi(x,y)-V(x)$.
	\begin{proposition}\label{propositionconvergence}
		Let $(\varphi_\varepsilon)\subset W^{1,p}(R^\varepsilon)$, $1<p<\infty$, with $\left|\left|\left|\varphi_\varepsilon\right|\right|\right|_{W^{1,p}(R^\varepsilon)}$ uniformly bounded and $V_\varepsilon(x)$ defined as in \eqref{Vauxiliar}. Then, there exists a function $\varphi\in W^{1,p}(0,1)$ such that, up to subsequences
		\begin{eqnarray*}
			&&V_\varepsilon\rightharpoonup \varphi\mbox{ weakly in } W^{1,p}(0,1)\mbox{ and strongly in } L^{p}(0,1),\\
			&&T_\varepsilon V_\varepsilon\rightarrow \varphi\mbox{ strongly in } L^{p}\left((0,1)\times Y^*\right),\\
			&&\left|\left|\left|\varphi_\varepsilon-V_\varepsilon\right|\right|\right|_{L^{p}(R^\varepsilon)}\rightarrow 0,\\
			&&\left|\left|\left|\varphi_\varepsilon-\varphi\right|\right|\right|_{L^{p}(R^\varepsilon)}\rightarrow 0,\\
			&&T_\varepsilon\varphi_\varepsilon \rightharpoonup \varphi\mbox{ weakly in } L^p\left((0,1);W^{1,p}(Y^*)\right),\\
			&&T_\varepsilon\varphi_\varepsilon \rightarrow \varphi \mbox{ strongly in } L^{p}\left((0,1)\times Y^*\right).
		\end{eqnarray*}
		
		Furthermore, there exists $\overline{\varphi}\in L^{p}\left((0,1)\times Y^*\right)$ with $\partial_{y_2}\overline{\varphi}\in L^{p}\left((0,1)\times Y^*\right)$ such that, up to subsequences
		\begin{eqnarray*}
			&&\frac{1}{\varepsilon}T_\varepsilon(\varphi_r^\varepsilon)\rightharpoonup\overline{\varphi}\mbox{ weakly in }L^{p}\left((0,1)\times Y^*\right), \textrm{ and } \\
			&&T_\varepsilon(\partial_{y}\varphi_\varepsilon)\rightharpoonup \partial_{y_2}\overline{\varphi}\mbox{ weakly in }L^{p}\left((0,1)\times Y^*\right)
		\end{eqnarray*}
		where $\varphi_r^\varepsilon\equiv\varphi_\varepsilon-V_\varepsilon$.
	\end{proposition}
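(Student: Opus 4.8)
\emph{Proof proposal.} The plan is to first establish uniform bounds on the one–variable averages $V_\varepsilon$, then prove a Poincar\'e–type inequality in the thin (vertical) direction comparing $\varphi_\varepsilon$ with $V_\varepsilon$, and finally feed these estimates into the unfolding properties of Proposition~\ref{unfoldingproperties}. All convergences hold along a single subsequence obtained by finitely many successive extractions. \emph{Step 1 (bounds on $V_\varepsilon$).} Since $g\ge g_0$, the strip $(0,1)\times(0,\varepsilon g_0)$ lies in $R^\varepsilon$, so $V_\varepsilon$ in \eqref{Vauxiliar} is well defined with $V_\varepsilon'(x)=\tfrac{1}{\varepsilon g_0}\int_0^{\varepsilon g_0}\partial_x\varphi_\varepsilon(x,s)\,ds$. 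Jensen's inequality applied to the average over $(0,\varepsilon g_0)$, followed by integration in $x$, gives
\[
\|V_\varepsilon\|_{L^p(0,1)}^p\le \frac{1}{g_0}\,|||\varphi_\varepsilon|||_{L^p(R^\varepsilon)}^p,\qquad
\|V_\varepsilon'\|_{L^p(0,1)}^p\le \frac{1}{g_0}\,|||\partial_x\varphi_\varepsilon|||_{L^p(R^\varepsilon)}^p .
\]
Hence $(V_\varepsilon)$ is bounded in $W^{1,p}(0,1)$; by reflexivity ($1<p<\infty$) and the compact embedding $W^{1,p}(0,1)\hookrightarrow L^p(0,1)$ we get, up to a subsequence, $V_\varepsilon\rightharpoonup\varphi$ weakly in $W^{1,p}(0,1)$ and strongly in $L^p(0,1)$, and then $T_\varepsilon V_\varepsilon\to\varphi$ strongly in $L^p((0,1)\times Y^*)$ by the unfolding convergence for strongly convergent sequences of one variable.

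\emph{Step 2 (vertical Poincar\'e inequality).} For a.e.\ $x$ and all $y\in(0,\varepsilon g(x/\varepsilon^\alpha))$, write $\varphi_\varepsilon(x,y)-V_\varepsilon(x)=\tfrac{1}{\varepsilon g_0}\int_0^{\varepsilon g_0}\bigl(\varphi_\varepsilon(x,y)-\varphi_\varepsilon(x,s)\bigr)\,ds$ and bound $|\varphi_\varepsilon(x,y)-\varphi_\varepsilon(x,s)|\le\int_0^{\varepsilon g(x/\varepsilon^\alpha)}|\partial_y\varphi_\varepsilon(x,t)|\,dt$. Applying H\"older in $t$ (interval length $\le\varepsilon g_1$) and then integrating over $R^\varepsilon$, using $\int_0^{\varepsilon g(x/\varepsilon^\alpha)}dy\le\varepsilon g_1$, one obtains the key estimate
\[
|||\varphi_\varepsilon-V_\varepsilon|||_{L^p(R^\varepsilon)}\le \varepsilon\, g_1\,|||\partial_y\varphi_\varepsilon|||_{L^p(R^\varepsilon)} .
\]
Since $|||\partial_y\varphi_\varepsilon|||_{L^p(R^\varepsilon)}$ is bounded, $|||\varphi_\varepsilon-V_\varepsilon|||_{L^p(R^\varepsilon)}\to0$. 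Moreover $|||V_\varepsilon-\varphi|||_{L^p(R^\varepsilon)}^p=\tfrac1\varepsilon\int_0^1\varepsilon g(x/\varepsilon^\alpha)|V_\varepsilon-\varphi|^p\,dx\le g_1\|V_\varepsilon-\varphi\|_{L^p(0,1)}^p\to0$, so $|||\varphi_\varepsilon-\varphi|||_{L^p(R^\varepsilon)}\to0$. Writing $T_\varepsilon\varphi_\varepsilon=T_\varepsilon V_\varepsilon+T_\varepsilon\varphi_r^\varepsilon$ and using $\|T_\varepsilon\varphi_r^\varepsilon\|_{L^p((0,1)\times Y^*)}\le L^{1/p}|||\varphi_r^\varepsilon|||_{L^p(R^\varepsilon)}\to0$ (property~6) together with Step~1 gives $T_\varepsilon\varphi_\varepsilon\to\varphi$ strongly in $L^p((0,1)\times Y^*)$.

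\emph{Step 3 (weak limit in $L^p((0,1);W^{1,p}(Y^*))$ and the corrector).} By properties~6--8, $\|T_\varepsilon\varphi_\varepsilon\|_{L^p((0,1)\times Y^*)}\le L^{1/p}|||\varphi_\varepsilon|||_{L^p(R^\varepsilon)}$ is bounded, while $\|\partial_{y_1}T_\varepsilon\varphi_\varepsilon\|_{L^p}\le\varepsilon^\alpha L^{1/p}|||\partial_x\varphi_\varepsilon|||_{L^p(R^\varepsilon)}\to0$ and $\|\partial_{y_2}T_\varepsilon\varphi_\varepsilon\|_{L^p}\le\varepsilon L^{1/p}|||\partial_y\varphi_\varepsilon|||_{L^p(R^\varepsilon)}\to0$; hence $(T_\varepsilon\varphi_\varepsilon)$ is bounded in $L^p((0,1);W^{1,p}(Y^*))$ and, up to a subsequence, converges weakly there to a limit whose $y_1$- and $y_2$-derivatives vanish, i.e.\ a function of $x$ alone, which the strong $L^p$ convergence of Step~2 identifies as $\varphi$. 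For the corrector, Step~2 gives $\|\tfrac1\varepsilon T_\varepsilon\varphi_r^\varepsilon\|_{L^p((0,1)\times Y^*)}\le L^{1/p}g_1|||\partial_y\varphi_\varepsilon|||_{L^p(R^\varepsilon)}$ and property~6 gives $\|T_\varepsilon(\partial_y\varphi_\varepsilon)\|_{L^p}\le L^{1/p}|||\partial_y\varphi_\varepsilon|||_{L^p(R^\varepsilon)}$, both bounded; passing to a further subsequence, $\tfrac1\varepsilon T_\varepsilon\varphi_r^\varepsilon\rightharpoonup\overline{\varphi}$ and $T_\varepsilon(\partial_y\varphi_\varepsilon)\rightharpoonup\psi$ weakly in $L^p((0,1)\times Y^*)$. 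Since $T_\varepsilon V_\varepsilon$ does not depend on $y_2$, property~7 yields $\partial_{y_2}\bigl(\tfrac1\varepsilon T_\varepsilon\varphi_r^\varepsilon\bigr)=\tfrac1\varepsilon\,\partial_{y_2}T_\varepsilon\varphi_\varepsilon=T_\varepsilon(\partial_y\varphi_\varepsilon)$, and since distributional derivatives are stable under weak $L^p$ convergence we conclude $\partial_{y_2}\overline{\varphi}=\psi\in L^p((0,1)\times Y^*)$, i.e.\ $T_\varepsilon(\partial_y\varphi_\varepsilon)\rightharpoonup\partial_{y_2}\overline{\varphi}$, which is the ``furthermore'' assertion.

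\emph{Main obstacle.} The heart of the argument is Step~2: on the non‑smooth thin domain $R^\varepsilon$ (possibly comb‑like) there is no uniformly bounded extension operator, so the splitting $\varphi_\varepsilon=V_\varepsilon+\varphi_r^\varepsilon$ and the hand‑made vertical Poincar\'e inequality must carry the whole analysis. The delicate point is to track the powers of $\varepsilon$ so as to land exactly on the scaling $|||\varphi_r^\varepsilon|||_{L^p(R^\varepsilon)}=O(\varepsilon)$, which is precisely what makes $\varepsilon^{-1}T_\varepsilon\varphi_r^\varepsilon$ bounded in $L^p((0,1)\times Y^*)$ and hence the corrector $\overline{\varphi}$ (with its $y_2$‑derivative) well defined; everything else follows mechanically from Proposition~\ref{unfoldingproperties}.
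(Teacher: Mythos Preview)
The paper does not actually prove this proposition: it is stated as a result ``recalled'' from the references \cite{AM,AM2}, with no argument given in the present text. Your proposal therefore supplies a proof where the paper offers only a citation.

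Your argument is correct and is precisely the natural one (and the one used in \cite{AM2}). The three ingredients---Jensen's inequality on the vertical average to bound $V_\varepsilon$ in $W^{1,p}(0,1)$, the handmade Poincar\'e inequality in the thin direction giving $|||\varphi_r^\varepsilon|||_{L^p(R^\varepsilon)}\le \varepsilon g_1\,|||\partial_y\varphi_\varepsilon|||_{L^p(R^\varepsilon)}$, and the chain rule $\partial_{y_2}\bigl(\tfrac1\varepsilon T_\varepsilon\varphi_r^\varepsilon\bigr)=T_\varepsilon(\partial_y\varphi_\varepsilon)$---are exactly what is needed, and you track the powers of $\varepsilon$ correctly. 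One small remark worth making explicit in Step~3: when you deduce that the weak limit in $L^p((0,1);W^{1,p}(Y^*))$ is a function of $x$ alone, you are implicitly using that $Y^*$ is connected (it contains the strip $(0,L)\times(0,g_0)$ since $g\ge g_0>0$), so that a function with vanishing $\nabla_y$ is constant in $y$; this is fine but could be stated.
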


Now, let us recall a compactness result which allows us to identify the limit of the image of the gradient of a uniformly bounded sequence by the unfolding operator method as $0<\alpha \leq 1$ in \eqref{TDs}. 
	
	\begin{theorem}\label{thmressonant} \label{thmalpha<1}
		Let $(\varphi_\varepsilon)\subset W^{1,p}(R^\varepsilon)$, $1<p<\infty$, with $\left|\left|\left|\varphi_\varepsilon\right|\right|\right|_{W^{1,p}(R^\varepsilon)}$ uniformly bounded. 
		
		Then, there exists $\varphi\in W^{1,p}(0,1)$ and $\varphi_1\in L^p((0,1);W^{1,p}_\#(Y^*))$ such that (up to a subsequence) 
		
		\begin{itemize} 
		\item[a)] if $\alpha=1$, we have
		\begin{eqnarray*}
			&&T_\varepsilon \varphi_\varepsilon\rightharpoonup \varphi\mbox{ weakly in } L^p\left((0,1);W^{1,p}(Y^*)\right),\\
			&&T_\varepsilon\partial_x\varphi_\varepsilon\rightharpoonup \partial_x \varphi+\partial_{y_1}\varphi_1\mbox{ weakly in }L^p\left((0,1)\times Y^*\right),\\
			&&T_\varepsilon\partial_y\varphi_\varepsilon\rightharpoonup \partial_{y_2}\varphi_1\mbox{ weakly in }L^p\left((0,1)\times Y^*\right).
		\end{eqnarray*}
	
	\item[b)] If $\alpha<1$, we obtain 
	 $\partial_{y_2}\varphi_1=0$  and 
		\begin{eqnarray*}
			&&T_\varepsilon\varphi_\varepsilon\rightharpoonup \varphi\mbox{, weakly in }L^p\left((0,1);W^{1,p}(Y^*)\right),\\
			&&T_\varepsilon\partial_x\varphi_\varepsilon\rightharpoonup \partial_x\varphi+\partial_{y_1}\varphi_1\mbox{, weakly in }L^p\left((0,1)\times Y^*\right).
		\end{eqnarray*}
			\end{itemize}
	
	\end{theorem}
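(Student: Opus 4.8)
The plan is to follow the standard scheme of the unfolding method (as developed in \cite{AM,AM2}) for characterizing the weak limit of unfolded gradients, adapted to the anisotropic scalings $\varepsilon^\alpha$ (horizontal) versus $\varepsilon$ (vertical) of the present thin oscillating domain, running the two cases in parallel and branching only in the treatment of the vertical derivative. First I would collect the compactness that is already available: by property 6 of Proposition \ref{unfoldingproperties} the uniform bound on $|||\varphi_\varepsilon|||_{W^{1,p}(R^\varepsilon)}$ makes $T_\varepsilon\varphi_\varepsilon$, $T_\varepsilon\partial_x\varphi_\varepsilon$ and $T_\varepsilon\partial_y\varphi_\varepsilon$ bounded in $L^p((0,1)\times Y^*)$, so up to a subsequence $T_\varepsilon\partial_x\varphi_\varepsilon\rightharpoonup\xi_0$ and (when $\alpha=1$) $T_\varepsilon\partial_y\varphi_\varepsilon$ converges weakly there; and Proposition \ref{propositionconvergence} already yields $\varphi\in W^{1,p}(0,1)$ with $T_\varepsilon\varphi_\varepsilon\to\varphi$ strongly in $L^p((0,1)\times Y^*)$ and weakly in $L^p((0,1);W^{1,p}(Y^*))$ — which is the first convergence claimed in (a) and (b) — together with $V_\varepsilon\to\varphi$ in $L^p(0,1)$ and $|||\varphi_\varepsilon-V_\varepsilon|||_{L^p(R^\varepsilon)}\to0$, where $V_\varepsilon$ is the vertical mean \eqref{Vauxiliar}.

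Next I would build the microscopic corrector. Set $\mathcal M_\varepsilon(x):=\langle T_\varepsilon\varphi_\varepsilon(x,\cdot,\cdot)\rangle_{Y^*}$, which is piecewise constant in $x$ on the cells $(kL\varepsilon^\alpha,(k+1)L\varepsilon^\alpha)$ and tends to $\varphi$ in $L^p(0,1)$, and put $\widehat\varphi_\varepsilon:=\varepsilon^{-\alpha}(T_\varepsilon\varphi_\varepsilon-\mathcal M_\varepsilon)$, so that $\langle\widehat\varphi_\varepsilon(x,\cdot,\cdot)\rangle_{Y^*}=0$ for a.e.\ $x$. By property 7 of Proposition \ref{unfoldingproperties} one has $\partial_{y_1}\widehat\varphi_\varepsilon=T_\varepsilon\partial_x\varphi_\varepsilon$, bounded in $L^p$, and $\partial_{y_2}\widehat\varphi_\varepsilon=\varepsilon^{1-\alpha}T_\varepsilon\partial_y\varphi_\varepsilon$, which is bounded if $\alpha=1$ and converges to $0$ strongly in $L^p$ if $\alpha<1$ (this is the only place where the hypothesis $\alpha\le1$ enters). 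Using a Poincaré--Wirtinger inequality on $Y^*$ — valid under $(\mathbf{H_g})$ in the functional framework of \cite{AM,AM2} — together with the vanishing $Y^*$-mean, I would conclude that $\widehat\varphi_\varepsilon$ is bounded in $L^p((0,1);W^{1,p}(Y^*))$; extracting a further subsequence $\widehat\varphi_\varepsilon\rightharpoonup\widehat\varphi$, uniqueness of weak limits gives $\partial_{y_1}\widehat\varphi=\xi_0$, and $\partial_{y_2}\widehat\varphi$ equals the weak limit of $T_\varepsilon\partial_y\varphi_\varepsilon$ if $\alpha=1$ and $\partial_{y_2}\widehat\varphi=0$ if $\alpha<1$.

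The remaining — and crucial — task is to isolate the macroscopic part by proving that $\langle\xi_0(x,\cdot,y_2)\rangle_{(0,L)}=\varphi'(x)$ on the left and right faces of $Y^*$. Once this holds, setting $\varphi_1:=\widehat\varphi-y_1\,\varphi'(x)$ gives $\partial_{y_1}\varphi_1=\xi_0-\varphi'$, $\partial_{y_2}\varphi_1=\partial_{y_2}\widehat\varphi$, and, by the fundamental theorem of calculus in $y_1$ on the (full) horizontal slices at heights $y_2<g_0$, $\varphi_1(x,L,\cdot)-\varphi_1(x,0,\cdot)=L\bigl(\langle\xi_0(x,\cdot,\cdot)\rangle_{(0,L)}-\varphi'(x)\bigr)=0$, so that $\varphi_1\in L^p((0,1);W^{1,p}_\#(Y^*))$; feeding this back into the previous step produces the remaining convergences of (a) and (b), with $\partial_{y_2}\varphi_1=0$ in (b). To establish the claimed identity I would test against $\theta(x)\psi(y_2)$, with $\theta$ smooth and compactly supported in $(0,1)$ and $\psi$ smooth and compactly supported in $(0,g_0)$, so that $(0,L)\times(0,g_0)\subset Y^*$ and $\psi(y_2)$ is supported away from $\partial Y^*\setminus\{y_2=0\}$: replacing $\theta(x)\psi(y_2)$ by $T_\varepsilon(\theta\,\psi(\cdot/\varepsilon))$ up to an $L^{p'}$-error tending to $0$ (properties 2 and 4 of Proposition \ref{unfoldingproperties} and $T_\varepsilon\theta\to\theta$ uniformly on $I_\varepsilon$), then invoking property 5, the unfolding criterion for integrals, the support of $\psi$ and the substitution $y=\varepsilon\eta$ over the flat strip $(0,1)\times(0,\varepsilon g_0)\subset R^\varepsilon$, one arrives at
\[
\frac1L\int_{(0,1)\times Y^*}T_\varepsilon\partial_x\varphi_\varepsilon\;\theta(x)\psi(y_2)\,dx\,dy_1\,dy_2=\int_0^{g_0}\psi(\eta)\Bigl(\int_0^1\partial_x\varphi_\varepsilon(x,\varepsilon\eta)\,\theta(x)\,dx\Bigr)d\eta+o(1),
\]
and, integrating by parts in $x$ with $\varphi_\varepsilon(\cdot,\varepsilon\eta)=V_\varepsilon+\varphi_r^\varepsilon(\cdot,\varepsilon\eta)$, $V_\varepsilon\to\varphi$ in $L^p(0,1)$ and $\int_0^1\!\int_0^{g_0}|\varphi_r^\varepsilon(x,\varepsilon\eta)|^p\,d\eta\,dx\le|||\varphi_\varepsilon-V_\varepsilon|||_{L^p(R^\varepsilon)}^p\to0$, the right-hand side tends to $(\int_0^{g_0}\psi)(\int_0^1\varphi'\theta)$ while the left-hand side tends to $\frac1L\int_{(0,1)\times Y^*}\xi_0\,\theta\,\psi(y_2)$; since $\theta$ and $\psi$ are arbitrary this forces $\int_0^L\xi_0(x,y_1,y_2)\,dy_1=L\,\varphi'(x)$ a.e.\ on $(0,1)\times(0,g_0)$, which by $L$-periodicity of $g$ is exactly the asserted identity on $\partial_{left}Y^*$ and $\partial_{right}Y^*$.

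I expect the hard part to be precisely this last step — the identification $\langle\xi_0(x,\cdot,y_2)\rangle_{(0,L)}=\varphi'(x)$, which decouples the weak limit of the unfolded $x$-derivative into a macroscopic gradient and a genuinely $L$-periodic microscopic corrector — and it is there that the decomposition $\varphi_\varepsilon=V_\varepsilon+\varphi_r^\varepsilon$ together with the convergence $V_\varepsilon\to\varphi$ is indispensable; everything $\alpha$-dependent is concentrated in the harmless factor $\varepsilon^{1-\alpha}$ in $\partial_{y_2}\widehat\varphi_\varepsilon$, which is what produces $\partial_{y_2}\varphi_1=0$ for $\alpha<1$. A secondary delicate point, which I would borrow from \cite{AM,AM2}, is that the Poincaré--Wirtinger inequality and the trace manipulations above remain valid on the possibly comb-like reference cell $Y^*$ under the very weak hypothesis $(\mathbf{H_g})$.
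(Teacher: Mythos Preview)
The paper does not prove this theorem at all; its proof is the single line ``See \cite[Theorem 3.1 and 4.1]{AM2}''. Your sketch is an honest reconstruction of the standard unfolding argument carried out in that reference --- compactness from Propositions \ref{unfoldingproperties} and \ref{propositionconvergence}, recentering by a cell mean to produce a bounded corrector $\widehat\varphi_\varepsilon=\varepsilon^{-\alpha}(T_\varepsilon\varphi_\varepsilon-\mathcal M_\varepsilon)$, Poincar\'e--Wirtinger on $Y^*$, and then separating the macroscopic derivative $\varphi'$ from the periodic remainder --- and the overall strategy is correct.

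There is, however, one genuine gap in your treatment of part (a). Your identity $\frac{1}{L}\int_0^L\xi_0(x,y_1,y_2)\,dy_1=\varphi'(x)$ is obtained only for $y_2\in(0,g_0)$, because the test functions $\psi(y_2)$ must be supported where the full horizontal segment $(0,L)\times\{y_2\}$ lies in $Y^*$; hence the fundamental theorem of calculus in $y_1$ yields $\varphi_1(x,L,y_2)=\varphi_1(x,0,y_2)$ only for $y_2<g_0$. Membership in $W^{1,p}_\#(Y^*)$ requires this equality on the whole lateral face $0<y_2<g(0)$, and your clause ``by $L$-periodicity of $g$'' does not bridge the difference: periodicity of $g$ gives $g(0)=g(L)$, not $g(0)=g_0$. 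For part (b) the gap closes by itself, since $\partial_{y_2}\varphi_1=0$ makes each lateral trace constant in $y_2$, so equality on $(0,g_0)$ propagates to $(0,g(0))$. For part (a) with $g(0)>g_0$ you need an extra ingredient. One clean route is to exploit the exact compatibility $T_\varepsilon\varphi_\varepsilon(x,L,y_2)=T_\varepsilon\varphi_\varepsilon(x+\varepsilon^\alpha L,0,y_2)$ built into the unfolding operator, pass to the weak limit using continuity of the lateral trace $W^{1,p}(Y^*)\to L^p(\partial_{\mathrm{lat}}Y^*)$, and conclude that $\widehat\varphi|_{y_1=L}-\widehat\varphi|_{y_1=0}$ depends only on $x$; combined with your identity on $(0,g_0)$ this forces it to equal $L\varphi'(x)$ everywhere on the lateral face. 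This is essentially how \cite{AM2} handles the periodicity, and it is precisely the ``trace manipulation on the comb-like cell'' that you flagged as delicate but did not actually carry out.
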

	\begin{proof}
		See \cite[Theorem 3.1 and 4.1]{AM2} respectively.  
	\end{proof}

Finally, we obtain uniform boundedness to the solutions of the $p$-Laplacian problem \eqref{problem01} for any value of $\alpha>0$.

	\begin{proposition}\label{uniformlimtation}
		Consider the variational formulation of our problem:
		\begin{equation}\label{1001}
		\int_{R^\varepsilon} \left\{ \left|\nabla u_\varepsilon\right|^{p-2}\nabla u_\varepsilon\nabla\varphi+\left|u_\varepsilon\right|^{p-2} u_\varepsilon\varphi \right\} dxdy = \int_{R^\varepsilon}f^\varepsilon\varphi dxdy, \; \varphi\in W^{1,p}(R^\varepsilon),
		\end{equation}
		where $f^\varepsilon$ satisfies $$\left|\left|\left|f^\varepsilon\right|\right|\right|_{L^{p'}(R^\varepsilon)}\leq c$$ for some positive constant $c$ independent of $\varepsilon>0$.
		Then, 
		\begin{equation*}
		\begin{gathered}
			\left|\left|\left|u_\varepsilon\right|\right|\right|_{L^{p}(R^\varepsilon)}\leq c,\quad \quad 
			\left|\left|\left|u_\varepsilon\right|\right|\right|_{W^{1,p}(R^\varepsilon)}\leq c,\\
			\left|\left|\left|\left|\nabla u_\varepsilon\right|^{p-2}\nabla u_\varepsilon\right|\right|\right|_{L^{p'}(R^\varepsilon)}\leq c.
		\end{gathered}
		\end{equation*}
	\end{proposition}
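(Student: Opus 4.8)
The plan is the classical energy estimate for monotone operators, carried out directly in the rescaled norms of the thin domain. First I would test the variational identity \eqref{1001} with $\varphi = u_\varepsilon$; since $\left|\nabla u_\varepsilon\right|^{p-2}\nabla u_\varepsilon\cdot\nabla u_\varepsilon = \left|\nabla u_\varepsilon\right|^{p}$ and $\left|u_\varepsilon\right|^{p-2}u_\varepsilon\, u_\varepsilon = \left|u_\varepsilon\right|^{p}$, the left-hand side is exactly $\left\|u_\varepsilon\right\|_{W^{1,p}(R^\varepsilon)}^{p}$, so that
\[
\left\|u_\varepsilon\right\|_{W^{1,p}(R^\varepsilon)}^{p} = \int_{R^\varepsilon} f^\varepsilon u_\varepsilon \, dxdy .
\]
If $\left\|u_\varepsilon\right\|_{W^{1,p}(R^\varepsilon)} = 0$ there is nothing to prove, so we may assume it is positive.

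Next I would bound the right-hand side by Hölder's inequality with conjugate exponents $p'$ and $p$, obtaining
\[
\int_{R^\varepsilon} f^\varepsilon u_\varepsilon \, dxdy \le \left\|f^\varepsilon\right\|_{L^{p'}(R^\varepsilon)}\left\|u_\varepsilon\right\|_{L^{p}(R^\varepsilon)} \le \left\|f^\varepsilon\right\|_{L^{p'}(R^\varepsilon)}\left\|u_\varepsilon\right\|_{W^{1,p}(R^\varepsilon)} ,
\]
and after dividing by $\left\|u_\varepsilon\right\|_{W^{1,p}(R^\varepsilon)}^{p-1}$ I would conclude $\left\|u_\varepsilon\right\|_{W^{1,p}(R^\varepsilon)}^{p-1} \le \left\|f^\varepsilon\right\|_{L^{p'}(R^\varepsilon)}$. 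The only point requiring a bit of care is translating this into the rescaled norms: using $\left\|u_\varepsilon\right\|_{W^{1,p}(R^\varepsilon)} = \varepsilon^{1/p}\left|\left|\left|u_\varepsilon\right|\right|\right|_{W^{1,p}(R^\varepsilon)}$, $\left\|f^\varepsilon\right\|_{L^{p'}(R^\varepsilon)} = \varepsilon^{1/p'}\left|\left|\left|f^\varepsilon\right|\right|\right|_{L^{p'}(R^\varepsilon)}$, and the identity $(p-1)/p = 1/p'$, the factors of $\varepsilon$ cancel and one is left with
\[
\left|\left|\left|u_\varepsilon\right|\right|\right|_{W^{1,p}(R^\varepsilon)}^{p-1} \le \left|\left|\left|f^\varepsilon\right|\right|\right|_{L^{p'}(R^\varepsilon)} \le c ,
\]
which yields the uniform bound for $\left|\left|\left|u_\varepsilon\right|\right|\right|_{W^{1,p}(R^\varepsilon)}$ and, a fortiori, for $\left|\left|\left|u_\varepsilon\right|\right|\right|_{L^{p}(R^\varepsilon)}$.

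For the flux term I would use the pointwise equality $\left|\,\left|\nabla u_\varepsilon\right|^{p-2}\nabla u_\varepsilon\,\right| = \left|\nabla u_\varepsilon\right|^{p-1}$ together with $(p-1)p' = p$, which give $\left\|\left|\nabla u_\varepsilon\right|^{p-2}\nabla u_\varepsilon\right\|_{L^{p'}(R^\varepsilon)} = \left\|\nabla u_\varepsilon\right\|_{L^{p}(R^\varepsilon)}^{p-1}$; the same cancellation of $\varepsilon$-powers as above, combined with $\left\|\nabla u_\varepsilon\right\|_{L^{p}(R^\varepsilon)} \le \left\|u_\varepsilon\right\|_{W^{1,p}(R^\varepsilon)}$, then produces $\left|\left|\left|\left|\nabla u_\varepsilon\right|^{p-2}\nabla u_\varepsilon\right|\right|\right|_{L^{p'}(R^\varepsilon)} \le \left|\left|\left|u_\varepsilon\right|\right|\right|_{W^{1,p}(R^\varepsilon)}^{p-1} \le c$, which is the last assertion. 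There is no substantial obstacle here: the argument is the standard a priori estimate, and the only thing to monitor carefully is the bookkeeping of the thin-domain scalings, which closes precisely because the rescaled $L^p$ and $L^{p'}$ norms carry the conjugate weights $\varepsilon^{-1/p}$ and $\varepsilon^{-1/p'}$.
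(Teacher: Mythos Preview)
Your proof is correct and follows the same approach as the paper: test \eqref{1001} with $\varphi=u_\varepsilon$, apply H\"older's inequality, and divide through. Your write-up is in fact more explicit than the paper's, which omits the bookkeeping of the $\varepsilon$-weights and the verification for the flux term that you spell out.
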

	\begin{proof}
		Take $\varphi=u_\varepsilon$ in \eqref{1001}. Then,
		\begin{eqnarray*}
		\left|\left|u_\varepsilon\right|\right|_{W^{1,p}(R^\varepsilon)}^p  = 
		\int_{R^\varepsilon} \left\{ \left|\nabla u_\varepsilon\right|^p+\left| u_\varepsilon\right|^p \right\} dx dy
		\leq \left|\left|f^\varepsilon\right|\right|_{L^{p'}(R^\varepsilon)} \left|\left|u_\varepsilon\right|\right|_{L^p(R^\varepsilon)}.
		\end{eqnarray*}
		%
		%
		Hence, 
		\begin{equation*}
		\left|\left|\left|u_\varepsilon\right|\right|\right|_{W^{1,p}(R^\varepsilon)}\leq c.
		\end{equation*}
		
		Therefore, the sequence $u_\varepsilon$ and $|\nabla u_\varepsilon|^{p-2}\nabla u_\varepsilon$, are respectively bounded in $L^p(R^\varepsilon)$ and $(L^{p'}(R^\varepsilon))^2$ under the norm $\left|\left|\left|\cdot\right|\right|\right|$.
	\end{proof}

   	\section{The resonant case: $\alpha=1$.} \label{seal1}
	
	In this section, we use the results on the Unfolding Operator described in Section \ref{USection}  in order to pass to the limit in problem \eqref{problem01} assuming $\alpha=1$. Notice that this case is called resonant since the amplitude and period of the oscillation are of the same order as the thickness of the thin domain. 
	
	Thus, we consider here in this section, the following two-dimensional thin domain family
	$$
	R^\varepsilon=\left\{(x,y)\in \mathbb{R}^2: 0<x<1, 0<y<\varepsilon g\left(\dfrac{x}{\varepsilon}\right)\right\}.
	$$
	with $g$ satisfying hypothesis ($\mathbf{H_g}$).
	We have the following result. 
		\begin{theorem}\label{resultressonant}
		Let $u_\varepsilon$ be the solution of problem \eqref{problem01} with $f^\varepsilon$ satisfying 
		$$\left|\left|\left|f^\varepsilon\right|\right|\right|_{L^{p'}(R^\varepsilon)}\leq c$$
		 for $c>0$ independent of $\varepsilon>0$. Suppose also that 
		\begin{equation}  \label{fhat01}
		T_\varepsilon f^\varepsilon\rightharpoonup\hat{f}\mbox{ weakly in }L^{p'}\left((0,1)\times Y^*\right).
		\end{equation}
		
		Then, there exists $(u,u_1)\in W^{1,p}(0,1)\times L^p((0,1);W^{1,p}_\#(Y^*))$ such that
		\begin{eqnarray*}
			\left\lbrace \begin{array}{llll}
				T_\varepsilon u_\varepsilon\rightharpoonup u\mbox{ weakly in } L^p((0,1);W^{1,p}(Y^*)),\\
				T_\varepsilon \left(\partial_x u_\varepsilon\right)\rightharpoonup \partial_xu+\partial_{y_1}u_1(x,y_1,y_2)\mbox{ weakly in } L^p\left((0,1);W^{1,p}(Y^*)\right),\\
				T_\varepsilon \left(\partial_y u_\varepsilon\right)\rightharpoonup \partial_{y_2}u_1(x,y_1,y_2)\mbox{ weakly in } L^p\left((0,1);W^{1,p}(Y^*)\right),
			\end{array}\right.
		\end{eqnarray*}
		and $u$ is the solution of the problem
		\begin{equation}\label{homogenizedlimitalfa=1}
		\int_0^1\left(q\,|\partial_xu|^{p-2}\partial_xu\partial_x \varphi+|\partial_xu|^{p-2}\partial_xu\varphi \right)dx =\int_0^1\bar{f}\varphi \,dx ,
		\end{equation}
		where 
		\begin{eqnarray}\label{falpha1}
		\bar{f}=\frac{1}{|Y^*|}\int_{Y^*}\hat{f} \, dy_1dy_2,
		\end{eqnarray}
		$$
		q=\frac{1}{|Y^*|}\int_{Y^*}|\nabla v|^{p-2}\partial_{y_1} v \, dy_1dy_2,
		$$
		and $v$ is the solution of the auxiliar problem
		\begin{eqnarray}\label{auxiliarproblem}
		\begin{gathered}
		\displaystyle\int_{Y^*}\left|\nabla v\right|^{p-2}\nabla v\nabla \varphi dy_1dy_2=0, \quad \forall\varphi\in W^{1,p}_{\#,0}(Y^*),\\
		(v-y_1)\in  W^{1,p}_{\#,0}(Y^*),
		\end{gathered}
		\end{eqnarray} 
		where $W^{1,p}_{\#,0}(Y^*)$ denotes the subspace of $W^{1,p}_{\#}(Y^*)$ of functions with zero average.
		
		Moreover, 
		\begin{equation} \label{eq680}
		u'(x)\nabla_{y}v(y_1,y_2)=(u'(x),0)+\nabla_{y} u_1(x,y_1,y_2),
		\end{equation}
		where $\nabla_y\cdot=\left(\partial_{y_1}\cdot,\partial_{y_2}\cdot\right)$.
	\end{theorem}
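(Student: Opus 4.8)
The plan is to transport the classical Tartar--Murat (Minty) monotonicity argument for monotone operators, as in \cite{MasoA,donato1990}, to the thin domain through the unfolding operator, in the spirit of \cite{AM,AM2}. By Proposition \ref{uniformlimtation}, the sequences $u_\varepsilon$ and $|\nabla u_\varepsilon|^{p-2}\nabla u_\varepsilon$ are bounded, in the $\varepsilon$-rescaled norms, in $W^{1,p}(R^\varepsilon)$ and $L^{p'}(R^\varepsilon)$ respectively. Theorem \ref{thmressonant}(a) then furnishes $u\in W^{1,p}(0,1)$ and $u_1\in L^p((0,1);W^{1,p}_\#(Y^*))$, which we normalize by $\langle u_1(x,\cdot)\rangle_{Y^*}=0$, such that along a subsequence the three weak convergences of the statement hold, while Proposition \ref{propositionconvergence} upgrades the first one to $T_\varepsilon u_\varepsilon\to u$ strongly in $L^p((0,1)\times Y^*)$. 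Writing $Z_\varepsilon:=(T_\varepsilon\partial_x u_\varepsilon,T_\varepsilon\partial_y u_\varepsilon)=T_\varepsilon\nabla u_\varepsilon$, we get $Z_\varepsilon\rightharpoonup G:=(u'+\partial_{y_1}u_1,\,\partial_{y_2}u_1)$ weakly in $L^p((0,1)\times Y^*)^2$, and since $T_\varepsilon(|\nabla u_\varepsilon|^{p-2}\nabla u_\varepsilon)=|Z_\varepsilon|^{p-2}Z_\varepsilon$ by the product property of $T_\varepsilon$ (Proposition \ref{unfoldingproperties}), also $|Z_\varepsilon|^{p-2}Z_\varepsilon\rightharpoonup\xi$ weakly in $L^{p'}((0,1)\times Y^*)^2$ along a further subsequence.

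Next I divide \eqref{1001} by $\varepsilon$ and pass to the limit using property 5 of Proposition \ref{unfoldingproperties} (the $R_1^\varepsilon$--integrals drop out since the relevant products satisfy the unfolding criterion for integrals of Section \ref{USection}), for two families of test functions. With $\varphi=\psi(x)$, $\psi\in W^{1,p}(0,1)$, using $T_\varepsilon\psi'\to\psi'$, $T_\varepsilon f^\varepsilon\rightharpoonup\hat f$ and the strong $L^{p'}$-convergence $|T_\varepsilon u_\varepsilon|^{p-2}T_\varepsilon u_\varepsilon\to|u|^{p-2}u$ (a consequence of $T_\varepsilon u_\varepsilon\to u$ in $L^p$), one obtains
\begin{equation*}
\int_0^1\widetilde\xi_1\,\psi'\,dx+\int_0^1|u|^{p-2}u\,\psi\,dx=\int_0^1\bar f\,\psi\,dx,\qquad \widetilde\xi_1(x):=\frac1{|Y^*|}\int_{Y^*}\xi_1(x,\cdot)\,dy_1dy_2,
\end{equation*}
with $\bar f$ as in \eqref{falpha1}. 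With $\varphi_\varepsilon(x,y)=\varepsilon\,\psi(x)\,w(x/\varepsilon,y/\varepsilon)$, $\psi\in C_0^\infty(0,1)$ and $w$ smooth and $L$-periodic in $y_1$, one computes $\nabla\varphi_\varepsilon$ by the chain rule: the ``$\varepsilon\,\psi'w$'' part contributes $O(\varepsilon)$, $T_\varepsilon\varphi_\varepsilon\to0$, and by Theorem \ref{convergencetheorem} the zeroth order term and the right-hand side vanish, leaving $\int_{(0,1)\times Y^*}\psi(x)\,\xi\cdot\nabla_y w=0$; by density of smooth periodic functions in $W^{1,p}_\#(Y^*)$ and arbitrariness of $\psi$ this gives the cell relation
\begin{equation*}
\int_{Y^*}\xi(x,y_1,y_2)\cdot\nabla_y w(y_1,y_2)\,dy_1dy_2=0\qquad\forall\,w\in W^{1,p}_\#(Y^*),\ \text{a.e. }x\in(0,1).
\end{equation*}

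The delicate point, and the main obstacle, is to identify $\xi=|G|^{p-2}G$: one cannot pass to the limit directly in $\int|Z_\varepsilon|^{p-2}Z_\varepsilon\cdot Z_\varepsilon$, a product of two merely weakly convergent sequences. I supply the missing energy bound from the equation itself: testing \eqref{1001} with $\varphi=u_\varepsilon$, dividing by $\varepsilon$ and passing to the limit (property 5, the u.c.i., $T_\varepsilon f^\varepsilon\rightharpoonup\hat f$, $T_\varepsilon u_\varepsilon\to u$ strongly) gives
\begin{equation*}
\limsup_{\varepsilon\to0}\int_{(0,1)\times Y^*}|Z_\varepsilon|^{p-2}Z_\varepsilon\cdot Z_\varepsilon\ \le\ |Y^*|\int_0^1\bigl(\bar f u-|u|^p\bigr)\,dx .
\end{equation*}
On the other hand, taking $\psi=u$ in the first identity above and $w=u_1(x,\cdot)$ in the cell relation, and using $G=u'\,e_1+\nabla_y u_1$, one computes $\int_{(0,1)\times Y^*}\xi\cdot G=\int_{(0,1)\times Y^*}\xi_1\,u'=|Y^*|\int_0^1(\bar f u-|u|^p)\,dx$, hence $\limsup_\varepsilon\int|Z_\varepsilon|^{p-2}Z_\varepsilon\cdot Z_\varepsilon\le\int\xi\cdot G$. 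Feeding this into the monotonicity inequality $\int(|Z_\varepsilon|^{p-2}Z_\varepsilon-|\Phi|^{p-2}\Phi)\cdot(Z_\varepsilon-\Phi)\ge0$, passing to the limit, then choosing $\Phi=G-\lambda\Theta$ for arbitrary $\Theta\in L^p((0,1)\times Y^*)^2$ and letting $\lambda\downarrow0$, forces $\xi=|G|^{p-2}G$ a.e.; together with weak lower semicontinuity of the $L^p$-norm this also yields $Z_\varepsilon\to G$ strongly in $L^p$, which is the basis of the corrector Corollary \ref{cor1045}.

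Finally I read off the cell problem. Setting $\chi(x,y_1,y_2):=u'(x)\,y_1+u_1(x,y_1,y_2)$, we have $\nabla_y\chi=G$, $\chi-u'(x)y_1=u_1\in W^{1,p}_\#(Y^*)$ with zero average, and, by the previous steps, $\int_{Y^*}|\nabla_y\chi|^{p-2}\nabla_y\chi\cdot\nabla_y w=0$ for all $w\in W^{1,p}_\#(Y^*)$, a.e. $x$. By the $(p-1)$-homogeneity of $\Delta_p$ and the Minty--Browder uniqueness for \eqref{auxiliarproblem}, it follows that $\nabla_y\chi(x,\cdot)=u'(x)\,\nabla_y v$ --- dividing $\chi$ by $u'(x)$ when $u'(x)\neq0$, and observing $\nabla_y\chi(x,\cdot)=0$ when $u'(x)=0$ --- which is exactly \eqref{eq680}. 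Consequently $\xi(x,\cdot)=|u'(x)|^{p-2}u'(x)\,|\nabla v|^{p-2}\nabla v$, so $\widetilde\xi_1=q\,|u'|^{p-2}u'$ with $q$ as in \eqref{coeq}, and substituting into the first identity above gives the weak form of \eqref{homogenizedlimitalfa=1}. Testing \eqref{auxiliarproblem} with $\varphi=v-y_1$ shows $q\,|Y^*|=\int_{Y^*}|\nabla v|^p>0$ (since $v-y_1$ is periodic but $y_1$ is not, $\nabla v\not\equiv0$), so the limit problem has the well-posed form \eqref{limeq}; its solution being unique, all the convergences hold along the whole family $\varepsilon\to0$, completing the proof.
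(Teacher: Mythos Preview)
Your proof is correct and follows essentially the same route as the paper: uniform a priori bounds, compactness via Theorem~\ref{thmressonant}, unfolded variational formulation tested with macroscopic functions $\psi(x)$ and with oscillating functions $\varepsilon\psi(x)w(x/\varepsilon,y/\varepsilon)$, identification of the flux limit by monotonicity, and finally reading off the cell problem \eqref{auxiliarproblem} via the $(p-1)$-homogeneity to obtain \eqref{eq680} and hence \eqref{homogenizedlimitalfa=1}.

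The only noteworthy difference is in how the flux limit $\xi$ is identified with $|G|^{p-2}G$. You run the abstract Minty trick: establish the energy inequality $\limsup\int|Z_\varepsilon|^{p-2}Z_\varepsilon\cdot Z_\varepsilon\le\int\xi\cdot G$ from the equation tested with $u_\varepsilon$, combine it with $\int(|Z_\varepsilon|^{p-2}Z_\varepsilon-|\Phi|^{p-2}\Phi)\cdot(Z_\varepsilon-\Phi)\ge0$, and conclude by the hemicontinuity argument $\Phi=G-\lambda\Theta$, $\lambda\downarrow0$; strong convergence of $Z_\varepsilon$ then follows from norm convergence plus uniform convexity of $L^p$. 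The paper instead builds the corrector $W_\varepsilon(x,y)=(u'(x),0)+\nabla_yu_1(x,x/\varepsilon,y/\varepsilon)$ explicitly, uses Proposition~\ref{convergenceplaplacetype} to get $T_\varepsilon W_\varepsilon\to G$ \emph{strongly}, and then shows directly that $\int T_\varepsilon[|\nabla u_\varepsilon|^{p-2}\nabla u_\varepsilon-|W_\varepsilon|^{p-2}W_\varepsilon]\cdot T_\varepsilon(\nabla u_\varepsilon-W_\varepsilon)\to0$ (by exactly the same three ingredients you use: the equation tested with $u_\varepsilon$, the cell relation, and the weak/strong pairings), invoking the inequalities of Proposition~\ref{proposicaoplaplaciano} to upgrade this to $T_\varepsilon\nabla u_\varepsilon-T_\varepsilon W_\varepsilon\to0$ in $L^p$. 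The paper's variant has the advantage that the corrector result (Corollary~\ref{cor1045}) is obtained simultaneously with the identification, without a separate norm-convergence argument; your variant is slightly more streamlined for the identification itself and avoids constructing $W_\varepsilon$. Both are standard and interchangeable.
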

	\begin{proof}
		From Proposition \ref{unfoldingproperties}, we can rewrite \eqref{1001} as
		\begin{equation} \label{variationalunfolded}
		\begin{gathered}
		\int_{(0,1)\times Y^*}T_\varepsilon\left(\left|\nabla u_\varepsilon\right|^{p-2}\nabla u_\varepsilon\right)T_\varepsilon\nabla\varphi dxdy_1dy_2+\frac{L}{\varepsilon}\int_{R_1^\varepsilon}\left|\nabla u_\varepsilon\right|^{p-2}\nabla u_\varepsilon\nabla\varphi dxdy\\
		+\int_{(0,1)\times Y^*}T_\varepsilon\left(\left| u_\varepsilon\right|^{p-2} u_\varepsilon\right)T_\varepsilon\varphi dxdy_1dy_2+\frac{L}{\varepsilon}\int_{R_1^\varepsilon}\left| u_\varepsilon\right|^{p-2} u_\varepsilon\varphi dxdy\\
		= \int_{(0,1)\times Y^*}T_\varepsilon f^\varepsilon T_\varepsilon \varphi dxdy_1dy_2+\frac{L}{\varepsilon}\int_{R_1^\varepsilon}f^\varepsilon\varphi dxdy.
		\end{gathered}
		\end{equation}
		By Proposition \ref{uniformlimtation} and Theorem \ref{thmressonant}, there exist 
		$$u\in W^{1,p}(0,1), \quad u_1\in L^p((0,1);W^{1,p}_\#(Y^*)) \quad \textrm{ and } \quad a_0\in L^p\left((0,1)\times Y^*\right)^2$$ 
		such that, up to subsequences,
		\begin{eqnarray}\label{unfoldingconvergence}
		\left\lbrace \begin{array}{llll}
		T_\varepsilon u_\varepsilon\rightarrow u\mbox{ strongly in } L^p\left((0,1)\times Y^*\right),\\
		T_\varepsilon \left(\partial_x u_\varepsilon\right)\rightharpoonup \partial_xu+\partial_{y_1}u_1(x,y_1,y_2)\mbox{ weakly in } L^p\left((0,1);W^{1,p}(Y^*)\right),\\
		T_\varepsilon \left(\partial_y u_\varepsilon\right)\rightharpoonup \partial_{y_2}u_1(x,y_1,y_2)\mbox{ weakly in } L^p\left((0,1);W^{1,p}(Y^*)\right),\\
		T_\varepsilon\left(\left|\nabla u_\varepsilon\right|^{p-2}\nabla u_\varepsilon\right)\rightharpoonup a_0\mbox{ weakly in } L^p\left((0,1)\times Y^*\right)^2.
		\end{array}\right.
		\end{eqnarray}
		
		We still have from Remark \ref{re483} that
		$$
		\left|T_\varepsilon u_\varepsilon\right|^{p-2}T_\varepsilon u_\varepsilon\rightarrow |u|^{p-2}u\mbox{ strongly in } L^{p'}\left((0,1)\times Y^*\right).
		$$
		Hence, passing to the limit in \eqref{variationalunfolded} for test functions $\varphi\in W^{1,p}(0,1)$, we get
		\begin{equation}\label{prehomogenized}
		\int_{(0,1)\times Y^*}a_0\nabla\varphi +\left|u\right|^{p-2}u\varphi dxdy_1dy_2=\int_{(0,1)\times Y^*} \hat{f}\varphi dxdy_1dy_2.
		\end{equation}
		
		Let $\phi\in C_0^\infty(0,1)$ and $\psi\in W^{1,p}_\#(Y^*)$. Define the sequence
		\begin{equation*}
		v_\varepsilon(x,y)=\varepsilon\phi(x)\psi\left(\frac{x}{\varepsilon},\frac{y}{\varepsilon}\right).
		\end{equation*}
		We apply the unfolding operator in this sequence and obtain
		\begin{eqnarray*}
		T_\varepsilon v_\varepsilon\rightarrow 0\mbox{ strongly in }L^p\left((0,1)\times Y^*\right),\\
		T_\varepsilon(\partial_x v_\varepsilon)\rightarrow \phi \partial_{y_1}\psi \mbox{ strongly in }L^p\left((0,1)\times Y^*\right),\\
		T_\varepsilon(\partial_y v_\varepsilon)\rightarrow \phi \partial_{y_2}\psi \mbox{ strongly in }L^p\left((0,1)\times Y^*\right).
		\end{eqnarray*}
		
		Thus, taking $v_\varepsilon$ as a test function in \eqref{variationalunfolded}, we obtain at $\varepsilon=0$ that 
		\begin{eqnarray}\label{preaxiliar}
		\int_{(0,1)\times Y^*}a_0\phi(x)\nabla_y\psi dxdy_1dy_2=0.
		\end{eqnarray}
		
		Hence, we get from \eqref{preaxiliar} and the density of the tensor product 
		$C_0^\infty(0,1)\otimes W^{1,p}_\#(Y^*)$ in $L^p((0,1);W^{1,p}_\#(Y^*))$ that
		\begin{eqnarray}\label{preaxiliar01}
		\int_{(0,1)\times Y^*}a_0 \, \nabla_y\psi \, dxdy_1dy_2=0, \quad \forall\psi\in L^p((0,1);W^{1,p}_\#(Y^*)).
		\end{eqnarray}

		Now, let us identify $a_0$. For this sake, let $u_1\in L^p((0,1);W^{1,p}_\#(Y^*))$ and $u\in W^{1,p}(0,1)$ be the ones obtained in \eqref{unfoldingconvergence}. Extend $\nabla_y u_1$ in the $y_1$-direction, and then define the sequence
		\begin{equation} \label{eqW_}
		W_\varepsilon(x,y)=\left(\partial_xu(x),0\right)+\nabla_y u_1\left(x,\frac{x}{\varepsilon},\frac{y}{\varepsilon}\right).
		\end{equation}

		See that $W_\varepsilon\in L^p(R^\varepsilon)\times L^p(R^\varepsilon)$. Also, due to Proposition \ref{convergenceplaplacetype}, we have
		\begin{equation}\label{convWeps}
		\begin{gathered}
		T_\varepsilon W_\varepsilon\rightarrow \left(\partial_xu,0\right)+\nabla_y u_1, \quad \textrm{ and } \\
		T_\varepsilon\left(|W_\varepsilon|^{p-2}W_\varepsilon\right)\rightarrow \left|\left(\partial_xu,0\right)+\nabla_y u_1\right|^{p-2}\left[\left(\partial_xu,0\right)+\nabla_y u_1\right]   
		\end{gathered}
		\end{equation}
		strongly in $L^p\left((0,1)\times Y^*\right)^2$ and $L^{p'}\left((0,1)\times Y^*\right)^2$, respectively.

		We need to prove that
		\begin{equation*}
		\int_{(0,1)\times Y^*}\left[\left|\nabla u+\nabla_yu_1\right|^{p-2}\left(\nabla u+\nabla_yu_1\right)-a_0\right]\varphi \, dxdy_1dy_2=0,
		\end{equation*}
		for all $\varphi\in C_0^\infty\left((0,1)\times Y^*\right) \times C_0^\infty\left((0,1)\times Y^*\right)$ in order to identify $a_0$.
		
		For this sake, let us first prove that the right hand side of 
			\begin{equation} \label{693}
			0\leq\int_{(0,1)\times Y^*}T_\varepsilon\left[|\nabla u_\varepsilon|^{p-2}\nabla u_\varepsilon-|W_\varepsilon|^{p-2}W_\varepsilon\right]T_\varepsilon\left(\nabla u_\varepsilon-W_\varepsilon\right)dxdy_1dy_2
			\end{equation}
			converges to zero. Notice that by the monotonicity of $|\cdot|^{p-2}\cdot$ (see Proposition \ref{proposicaoplaplaciano}) the inequality above is obtained. To pass to the limit in \eqref{693}, we first use a distributive in the integral. 
			
			Using \eqref{variationalunfolded}, denoting $dY=dy_1 dy_2$ and using \eqref{unfoldingconvergence}, we get that 
			\begin{equation*}
				\begin{gathered}
				\lim_{\varepsilon\rightarrow 0} \int_{(0,1)\times Y^*}T_\varepsilon\left(|\nabla u_\varepsilon|^{p-2}\nabla u_\varepsilon\right)T_\varepsilon(\nabla u_\varepsilon)dxdY  \\
				 = \lim_{\varepsilon\rightarrow 0}\left[\int_{(0,1)\times Y^*}T_\varepsilon f^\varepsilon T_\varepsilon u_\varepsilon dxdY+\dfrac{L}{\varepsilon}\int_{R^\varepsilon_1}f^\varepsilon u_\varepsilon dxdy \right.\\
				\left. - \int_{(0,1)\times Y^*}T_\varepsilon\left(| u_\varepsilon|^{p-2} u_\varepsilon\right)T_\varepsilon u_\varepsilon dxdY -\dfrac{L}{\varepsilon}\int_{R^\varepsilon_1}|u_\varepsilon|^{p-2} u_\varepsilon u_\varepsilon dxdy\right]=\\
				\int_{(0,1)\times Y^*}\left(\hat{f}-|u|^{p-2}u\right)udxdY.
				\end{gathered}
			\end{equation*}
			Consequently, we get from \eqref{prehomogenized} that 
			\begin{equation} \label{eq777}
			\lim_{\varepsilon\rightarrow 0} \int_{(0,1)\times Y^*}T_\varepsilon\left(|\nabla u_\varepsilon|^{p-2}\nabla u_\varepsilon\right)T_\varepsilon(\nabla u_\varepsilon)dxdY =
				\int_{(0,1)\times Y^*} a_0 \nabla u dxdY.
			\end{equation}
			
			On the other hand, due to \eqref{unfoldingconvergence}, \eqref{convWeps} and \eqref{preaxiliar01}, we get
			\begin{equation}  \label{eq780}
			\begin{gathered}
				\lim_{\varepsilon\rightarrow 0} \int_{(0,1)\times Y^*}T_\varepsilon\left(|\nabla u_\varepsilon|^{p-2}\nabla u_\varepsilon\right)T_\varepsilon(W_\varepsilon)dxdY \\ = \int_{(0,1)\times Y^*} a_0 \left(\nabla u+\nabla_yu_1\right)  dxdY\\
				=\int_{(0,1)\times Y^*} a_0\nabla u dxdY.
			\end{gathered}
			\end{equation}

			Finally, we have 			
			\begin{equation} \label{eq790}
			\begin{gathered}
			\lim_{\varepsilon\rightarrow 0} \int_{(0,1)\times Y^*}T_\varepsilon\left(|W_\varepsilon|^{p-2}W_\varepsilon\right)T_\varepsilon\left(\nabla u_\varepsilon-W_\varepsilon\right)dxdY = 0,
			\end{gathered}
			\end{equation}
			by \eqref{convWeps} and \eqref{unfoldingconvergence}. 
			Indeed, we have $T_\varepsilon (\nabla u_\varepsilon - W_\varepsilon) \rightharpoonup 0$ weakly in $L^p((0,1) \times Y^*)$.
			
			Thus, from \eqref{eq777}, \eqref{eq780} and \eqref{eq790}, we can pass to the limit in \eqref{693} to get
			\begin{equation}\label{694}
				\int_{(0,1)\times Y^*}T_\varepsilon\left[|\nabla u_\varepsilon|^{p-2}\nabla u_\varepsilon-|W_\varepsilon|^{p-2}W_\varepsilon\right]T_\varepsilon\left(\nabla u_\varepsilon-W_\varepsilon\right)dxdY\rightarrow 0.
			\end{equation}
			
			Suppose $p\geq 2$. Then, by Proposition \ref{proposicaoplaplaciano}, we have 
			\begin{eqnarray} \label{eq823}
			&&\int_{(0,1)\times Y^*}\left|T_\varepsilon \nabla u_\varepsilon-T_\varepsilon W_\varepsilon\right|^p dxdY\nonumber\\
			&\leq&c\int_{(0,1)\times Y^*}T_\varepsilon\left(|\nabla u_\varepsilon|^{p-2}\nabla u_\varepsilon-|W_\varepsilon|^{p-2}W_\varepsilon\right)\left(T_\varepsilon \nabla u_\varepsilon-T_\varepsilon W_\varepsilon\right) dxdY\nonumber\\
			&\rightarrow& 0 \qquad \mbox{ as }\varepsilon\rightarrow 0.
			\end{eqnarray}
			If $1<p\leq2$, we have, using a H\"older's inequality for the exponent $\frac{2}{p}$ (and its conjugate $\frac{2}{2-p}$) and Proposition \ref{proposicaoplaplaciano}, 
			\begin{eqnarray}\label{824}
			&&\int_{(0,1)\times Y^*}\left|T_\varepsilon \nabla u_\varepsilon-T_\varepsilon W_\varepsilon\right|^p dxdY\nonumber\\
			&=&\int_{(0,1)\times Y^*}\left|T_\varepsilon \nabla u_\varepsilon-T_\varepsilon W_\varepsilon\right|^p \frac{\left(1+|T_\varepsilon\nabla u_\varepsilon|+|T_\varepsilon W_\varepsilon|\right)^{\frac{(p-2)p}{2}}}{\left(1+|T_\varepsilon\nabla u_\varepsilon|+|T_\varepsilon W_\varepsilon|\right)^{\frac{(p-2)p}{2}}} dxdY\nonumber\\
			&\leq& \left(\int_{(0,1)\times Y^*}\left|T_\varepsilon \nabla u_\varepsilon-T_\varepsilon W_\varepsilon\right|^2\left(1+|T_\varepsilon\nabla u_\varepsilon|+|T_\varepsilon W_\varepsilon|\right)^{(p-2)}dxdY\right)^{p/2}\nonumber\\
			&&\qquad\qquad\qquad\qquad\left(\int_{(0,1)\times Y^*}\left(1+|T_\varepsilon\nabla u_\varepsilon|+|T_\varepsilon W_\varepsilon|\right)^pdxdY\right)^{(2-p)/2}\nonumber\\
			&\leq& c \left[\int_{(0,1)\times Y^*}T_\varepsilon\left(|\nabla u_\varepsilon|^{p-2}\nabla u_\varepsilon-|W_\varepsilon|^{p-2}W_\varepsilon\right)\left(T_\varepsilon \nabla u_\varepsilon-T_\varepsilon W_\varepsilon\right) dxdY\right]^{p/2}\nonumber\\
			&\to& 0.
			\end{eqnarray}
		Now, let us prove 
		\begin{equation} \label{eq829}
		\int_{(0,1)\times Y^*}T_\varepsilon\left(|\nabla u_\varepsilon|^{p-2}\nabla u_\varepsilon-|W_\varepsilon|^{p-2}W_\varepsilon\right)\varphi \, dxdY \rightarrow 0
		\end{equation}
		for any test function $\varphi\in C_0^\infty\left((0,1)\times Y^*\right)\times C_0^\infty\left((0,1)\times Y^*\right)$.
		
		First, consider $p\geq 2$. Therefore, from Corollary \eqref{corolarioplaplaciano} and Young inequality, we get
		\begin{eqnarray*}
		&&\nonumber\int_{(0,1)\times Y^*}T_\varepsilon\left(|\nabla u_\varepsilon|^{p-2}\nabla u_\varepsilon-|W_\varepsilon|^{p-2}W_\varepsilon\right)\varphi dxdY\\\nonumber
		&\leq&c\int_{(0,1)\times Y^*}\left||T_\varepsilon\nabla u_\varepsilon|^{p-2}T_\varepsilon\nabla u_\varepsilon-|T_\varepsilon W_\varepsilon|^{p-2}T_\varepsilon W_\varepsilon\right|dxdY\\\nonumber
		&\leq& c\int_{(0,1)\times Y^*}\left(1+|T_\varepsilon \nabla u_\varepsilon|+|T_\varepsilon W_\varepsilon|\right)^{p-2}\left|T_\varepsilon \left(\nabla u_\varepsilon-W_\varepsilon\right)\right|dxdY\\\nonumber
		&\leq&c\left(\int_{(0,1)\times Y^*}\left(1+|T_\varepsilon \nabla u_\varepsilon|+|T_\varepsilon W_\varepsilon|\right)^{p}dxdY\right)^{1/p'}\\
		&&\qquad\qquad\left(\int_{(0,1)\times Y^*}\left|T_\varepsilon \nabla u_\varepsilon-T_\varepsilon W_\varepsilon\right|^p dxdY\right)^{1/p}\nonumber\\
		&\to& 0,
		\end{eqnarray*}
		by convergence \ref{eq823}.
		%
		%
%

		For $1<p<2$, we perform analogous arguments. Using Corollary \ref{corolarioplaplaciano}, a H\"older's inequality and the convergence \eqref{824}, one gets 
		\begin{eqnarray*}
		&&\nonumber\int_{(0,1)\times Y^*}T_\varepsilon\left(|\nabla u_\varepsilon|^{p-2}\nabla u_\varepsilon-|W_\varepsilon|^{p-2}W_\varepsilon\right)\varphi dxdY\\\nonumber
		&\leq&c\int_{(0,1)\times Y^*}\left|T_\varepsilon\nabla u_\varepsilon-T_\varepsilon W_\varepsilon\right|^{p-1}dxdY\\
		&=&\nonumber c\left(\int_{(0,1)\times Y^*}\left|T_\varepsilon\nabla u_\varepsilon-T_\varepsilon W_\varepsilon\right|^{p}dxdY\right)^{1/p'}\\
		&\to& 0.
		\end{eqnarray*}		
		Thus, for any $p> 1$ and $\varphi\in C_0^\infty\left((0,1)\times Y^*\right)\times C_0^\infty\left((0,1)\times Y^*\right)$, we have 
		$$
		\int_{(0,1)\times Y^*}\left(a_0-\left|\nabla u+\nabla_yu_1\right|^{p-2}\left(\nabla u+\nabla_yu_1\right)\right)\varphi dxdY=0.
		$$
		
		Finally, let us associate $a_0$ with the auxiliary problem \eqref{auxiliarproblem}. 
		We first rewrite \eqref{preaxiliar01} as 
		\begin{eqnarray}\label{preaxiliar02}
		\int_{(0,1)\times Y^*}\left|\nabla u+\nabla_yu_1\right|^{p-2}\left(\nabla u+\nabla_yu_1\right)\nabla_y\psi \, dxdY=0,
		\end{eqnarray}
		for any $\psi \in L^p((0,1);W^{1,p}_\#(Y^*))$.

		From Minty-Browder Theorem, we have that \eqref{preaxiliar02} sets a well posed problem, and then, for each $u \in W^{1,p}(0,1)$, possesses an unique solution $u_1 \in L^p((0,1);W^{1,p}_\#(Y^*)/\mathbb{R})$. Notice that $W^{1,p}_\#(Y^*)/\mathbb{R}$ is identified with the closed subspace of $W^{1,p}_\#(Y^*)$ consisting of all its functions with zero average.

		We might mention that multiplying the solution $v$ of the equation \eqref{auxiliarproblem} by $u'$, then $u'v$ depends on $x$ and belongs to the space $L^p((0,1);W^{1,p}_\#(Y^*)/\mathbb{R})$. Multiplying \eqref{auxiliarproblem} by a function $|\partial_xu|^{p-2}\partial_xu$ and by $\phi\in C_0^\infty(0,1)$, and integrating in $(0, 1)$,  we get
		\begin{equation*}
		\int_{(0,1)\times Y^*} \phi \, \left|\partial_xu\nabla_yv\right|^{p-2}\partial_xu\nabla_yv \,  \nabla_y\varphi \, dxdY=0, \quad  
		\forall\varphi\in W^{1,p}_\#(Y^*)/\mathbb{R}.
		\end{equation*}
		Thus, from the density of tensor product $C_0^\infty(0,1)\otimes (W^{1,p}_\#(Y^*)/\mathbb{R})$, we get
		\begin{equation}\label{preauxiliar03}
		\int_{(0,1)\times Y^*}\left|\partial_xu\nabla_yv\right|^{p-2}\partial_xu\nabla_yv\nabla_y\psi dxdY=0, \quad \forall\psi\in L^p((0,1);W^{1,p}_\#(Y^*)/\mathbb{R}).
		\end{equation}
		
		Hence, we can conclude \eqref{eq680} from equations \eqref{preaxiliar02} and \eqref{preauxiliar03}.
		Moreover, we can rewrite \eqref{prehomogenized} as 
$$
		\int_{(0,1)\times Y^*}\left|\partial_xu\nabla_yv\right|^{p-2}\partial_xu\nabla_yv\nabla\varphi +\left|u\right|^{p-2}u\varphi dxdY=\int_{(0,1)\times Y^*} \hat{f}\varphi dxdY,
$$
		which is equivalent to
		\begin{equation*}
		\int_0^1 q|\partial_xu|^{p-2}\partial_xu\partial_x\varphi+\left|u\right|^{p-2}u\varphi dx=\int_0^1 \bar{f}\varphi dx,
		\end{equation*}
		where $q$ is that one given by
		$$
		q=\int_{Y^*}|\nabla_y v|^{p-2}\partial_{y_1}v dY
		$$
		and $$\bar{f}=\frac{1}{|Y^*|}\int_{Y^*}\hat{f}dY.$$
		
		We point out that $q>0$. Indeed, since $\left(\nabla_yv -(1,0)\right)\in W^{1,p}_{\#,0}(Y^*)$, we get
		\begin{equation}\label{q>0}
		0<\int_{Y^*}|\nabla_y v|^{p}dY=\int_{Y^*}|\nabla_y v|^{p-2}\nabla_y v \left[(1,0) +\nabla_yv -(1,0)\right] dY=\int_{Y^*}|\nabla_y v|^{p-2}\partial_{y_1}v dY.
		\end{equation}
		Finally, problem \eqref{homogenizedlimitalfa=1} is well posed, we conclude the proof noting that $T_\varepsilon u_\varepsilon$ is a convergent sequence.
	\end{proof}	
	
	In the proof of Theorem \ref{resultressonant}, we have already obtained a corrector result. The corrector function is given by $W_\varepsilon$.
	Indeed, according to \cite{MasoA}, since we already have, by Propositions \ref{uniformlimtation} and \ref{propositionconvergence}, 
	$$||| u_\varepsilon - u|||_{L^p(R^\varepsilon)} \to 0, \quad \textrm{ as } \varepsilon \to 0,$$  
	we just need to construct the corrector to the term $\nabla u_\varepsilon$.  
	We have the following result.

	\begin{corollary} \label{cor1045}
		The function $W_\varepsilon$ defined in \eqref{eqW_} satisfies
		$$
		\left|\left|T_\varepsilon \nabla u_\varepsilon-T_\varepsilon W_\varepsilon\right|\right|_{L^p\left((0,1)\times Y^*\right)}\rightarrow 0.
		$$
	\end{corollary}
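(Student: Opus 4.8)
The plan is to extract the relevant convergence statement directly from the computations already carried out in the proof of Theorem \ref{resultressonant}. Recall that there we introduced $W_\varepsilon$ as in \eqref{eqW_}, and established the key limit \eqref{694}, namely
\[
\int_{(0,1)\times Y^*}T_\varepsilon\left[|\nabla u_\varepsilon|^{p-2}\nabla u_\varepsilon-|W_\varepsilon|^{p-2}W_\varepsilon\right]T_\varepsilon\left(\nabla u_\varepsilon-W_\varepsilon\right)dxdY\rightarrow 0.
\]
The assertion of the corollary is then an immediate consequence of the coercivity estimates in Proposition \ref{proposicaoplaplaciano}, exactly as exploited in \eqref{eq823} and \eqref{824}. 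First I would treat the case $p\geq 2$: by the first inequality of Proposition \ref{proposicaoplaplaciano} applied pointwise with $x = T_\varepsilon\nabla u_\varepsilon$ and $y = T_\varepsilon W_\varepsilon$ (using the multiplicativity of $T_\varepsilon$ from Proposition \ref{unfoldingproperties}, so that $T_\varepsilon(|\nabla u_\varepsilon|^{p-2}\nabla u_\varepsilon) = |T_\varepsilon\nabla u_\varepsilon|^{p-2}T_\varepsilon\nabla u_\varepsilon$ and similarly for $W_\varepsilon$), we get
\[
c_p\int_{(0,1)\times Y^*}\left|T_\varepsilon\nabla u_\varepsilon - T_\varepsilon W_\varepsilon\right|^p dxdY \leq \int_{(0,1)\times Y^*}T_\varepsilon\left[|\nabla u_\varepsilon|^{p-2}\nabla u_\varepsilon-|W_\varepsilon|^{p-2}W_\varepsilon\right]T_\varepsilon\left(\nabla u_\varepsilon-W_\varepsilon\right)dxdY,
\]
and the right-hand side tends to $0$ by \eqref{694}, which gives the claim.

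For $1<p<2$ I would reproduce the Hölder-interpolation argument of \eqref{824}: writing the $L^p$ norm of $T_\varepsilon\nabla u_\varepsilon - T_\varepsilon W_\varepsilon$ by inserting the factor $(1+|T_\varepsilon\nabla u_\varepsilon|+|T_\varepsilon W_\varepsilon|)^{(p-2)p/2}$ over itself, applying Hölder with exponents $2/p$ and $2/(2-p)$, and then bounding the first factor by the second inequality of Proposition \ref{proposicaoplaplaciano} and the second factor by the uniform $L^p$-bound on $T_\varepsilon\nabla u_\varepsilon$ (which follows from Proposition \ref{uniformlimtation} and property 7 of Proposition \ref{unfoldingproperties}) together with the strong convergence $T_\varepsilon W_\varepsilon\to(\partial_x u,0)+\nabla_y u_1$ from \eqref{convWeps}. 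This yields
\[
\int_{(0,1)\times Y^*}\left|T_\varepsilon\nabla u_\varepsilon - T_\varepsilon W_\varepsilon\right|^p dxdY \leq c\left(\int_{(0,1)\times Y^*}T_\varepsilon\left[|\nabla u_\varepsilon|^{p-2}\nabla u_\varepsilon-|W_\varepsilon|^{p-2}W_\varepsilon\right]T_\varepsilon\left(\nabla u_\varepsilon-W_\varepsilon\right)dxdY\right)^{p/2}\to 0,
\]
again by \eqref{694}.

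In truth there is essentially nothing new to prove: the content of the corollary was already established inside the proof of Theorem \ref{resultressonant}, and the only task is to isolate it as a statement. Consequently I do not anticipate any genuine obstacle; the only point requiring a word of care is making sure that $|W_\varepsilon|^{p-2}W_\varepsilon$ is an admissible test-type object and that $T_\varepsilon$ commutes with the nonlinearity on it — but this is exactly guaranteed by parts 2 and 4 of Proposition \ref{unfoldingproperties} together with the fact that $W_\varepsilon$ has the oscillatory form $\psi(x/\varepsilon,y/\varepsilon)$ in its second summand plus a function of $x$ alone. One could additionally remark that, combined with property 6 of Proposition \ref{unfoldingproperties}, the conclusion transfers back to the thin domain as $\varepsilon^{-1}\|\nabla u_\varepsilon - W_\varepsilon\|_{L^p(R^\varepsilon)}^p\to 0$, which is the corrector statement announced in the introduction.
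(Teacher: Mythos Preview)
Your proposal is correct and follows exactly the paper's own approach: the corollary is simply a restatement of what was already proved in \eqref{eq823} (for $p\geq 2$) and \eqref{824} (for $1<p<2$), both of which rely on \eqref{694} together with Proposition \ref{proposicaoplaplaciano}. The paper's proof is a one-line reference to those two displays, and your write-up just spells out the same argument in more detail.
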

	\begin{proof}
		The proof follows from \eqref{eq823}, when $p\geq 2$, and \eqref{824}, when $1<p\leq 2$.
	\end{proof}

	\begin{remark}
	From Corollary \ref{cor1045} and Proposition 2.17 of \cite{AM2}, we can conclude that 
	$$
	|||\nabla u_\varepsilon-W_\varepsilon  |||_{L^p(R^\varepsilon)}\rightarrow 0, \quad \textrm{ as } \varepsilon \to 0.
	$$
	Consequently, due to \eqref{eqW_}, we obtain that the function 
	$$
	\nabla_y u_1\left(x, \frac{x}{\varepsilon}, \frac{y}{\varepsilon}\right), \quad (x,y) \in R^\varepsilon,
	$$
	works as a corrector function to $\nabla u_\varepsilon$ in $L^p(R^\varepsilon)$ with the norm $||| \cdot |||$ since it allows strong convergence.
	\end{remark}

	\section{Weak oscillation case: $\alpha<1$.}  \label{seal<1}
	
	Now, let us consider $\alpha<1$. Here, we deal with the oscillatory thin domain $R^\varepsilon$ given by
	$$
	R^\varepsilon=\left\{(x,y)\in \mathbb{R}^2: 0<x<1, 0<y<\varepsilon g\left(\dfrac{x}{\varepsilon^\alpha}\right)\right\}
	$$
	with $g$ satisfying hypothesis ($\mathbf{H_g}$) at Section 2.
	
	In order to obtain the homogenized equation, we will proceed as in the previous case $\alpha=1$, Section \ref{seal1}. 
	We show the following result.
	
	\begin{theorem}\label{weakresult}
		Let $u_\varepsilon$ be the solution of problem \eqref{problem01} with $f^\varepsilon$ satisfying $\left|\left|\left|f^\varepsilon\right|\right|\right|_{L^{p'}(R^\varepsilon)}\leq c$ for some positive constant independent of $\varepsilon>0$. Suppose also that 
		\begin{equation*}
		T_\varepsilon f^\varepsilon\rightharpoonup\hat{f}\mbox{ weakly in }L^{p'}\left((0,1)\times Y^*\right).
		\end{equation*}
		Then, there exists $(u,u_1)\in W^{1,p}(0,1)\times L^p\left((0,1);W^{1,p}_\#(Y^*)\right)$ with $\partial_{y_2}u_1=0$ such that
		\begin{eqnarray*}
		T_\varepsilon u_\varepsilon\rightharpoonup u \mbox{ weakly in }L^p\left((0,1);W^{1,p}(Y^*)\right)\\
		T_\varepsilon \partial_x u_\varepsilon\rightharpoonup \partial_x u+\partial_{y_1}u_1\mbox{ weakly in }L^p\left((0,1)\times Y^*\right)
		\end{eqnarray*}
		where $u$ is the unique solution of 
		\begin{equation*}\label{homogenizedequationalfa<1}
		\int_0^1\left[\dfrac{1}{\langle g\rangle_{(0,L)}\left\langle 1/g^{p'-1}\right\rangle_{(0,L)}^{p-1}}|\partial_x u|^{p-2}\partial_x u\partial_x \varphi +|u|^{p-2}u\varphi \right]dx=\int_0^1\bar{f}\varphi dx, \quad \forall \varphi\in W^{1,p}(0,1),
		\end{equation*}
		and 
$$		
\bar{f}=\dfrac{1}{|Y^*|}\int_{Y^*}\hat{f}dy_1dy_2.
$$
	\end{theorem}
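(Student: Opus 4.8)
\textbf{Proof proposal for Theorem \ref{weakresult}.}

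The plan is to mimic the structure of the proof of Theorem \ref{resultressonant}, exploiting the extra structural simplification available when $\alpha<1$, namely that part (b) of Theorem \ref{thmalpha<1} forces $\partial_{y_2}u_1=0$, so that $u_1=u_1(x,y_1)$ and only the horizontal direction contributes a corrector term. First I would apply Proposition \ref{uniformlimtation} to obtain uniform bounds on $u_\varepsilon$ in the rescaled $W^{1,p}$-norm and on $|\nabla u_\varepsilon|^{p-2}\nabla u_\varepsilon$ in the rescaled $L^{p'}$-norm, then invoke Theorem \ref{thmalpha<1}(b) and Proposition \ref{propositionconvergence} to extract, along a subsequence, the limits $u\in W^{1,p}(0,1)$, $u_1\in L^p((0,1);W^{1,p}_\#(Y^*))$ with $\partial_{y_2}u_1=0$, a limit $a_0\in L^{p'}((0,1)\times Y^*)^2$ for $T_\varepsilon(|\nabla u_\varepsilon|^{p-2}\nabla u_\varepsilon)$, and the fact that $T_\varepsilon(\partial_y u_\varepsilon)\rightharpoonup 0$. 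Unfolding the variational formulation \eqref{1001} via property 5 of Proposition \ref{unfoldingproperties}, using the (u.c.i.) to kill the $R_1^\varepsilon$-defect integrals, and passing to the limit with test functions $\varphi\in W^{1,p}(0,1)$ and then with oscillating test functions $v_\varepsilon(x,y)=\varepsilon^\alpha\phi(x)\psi(x/\varepsilon^\alpha)$ (with $\psi\in W^{1,p}_\#(0,L)$, noting that $\partial_x v_\varepsilon\to\phi\,\partial_{y_1}\psi$ while $\partial_y v_\varepsilon\to 0$ after unfolding), I obtain the two-scale homogenized system: one equation tested against $x$-functions and the cell equation $\int_{(0,1)\times Y^*}a_0\cdot(\partial_{y_1}\psi,0)\,dxdY=0$ for all admissible $\psi$.

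The next step is the identification of $a_0$. Exactly as in Theorem \ref{resultressonant}, I would introduce $W_\varepsilon(x,y)=(\partial_x u(x)+\partial_{y_1}u_1(x,x/\varepsilon^\alpha),\,0)$, use Proposition \ref{convergenceplaplacetype} and Remark \ref{re483} to get strong convergence of $T_\varepsilon W_\varepsilon$ and $T_\varepsilon(|W_\varepsilon|^{p-2}W_\varepsilon)$, and run the Minty-type monotonicity argument: show the nonnegative quantity $\int T_\varepsilon[|\nabla u_\varepsilon|^{p-2}\nabla u_\varepsilon-|W_\varepsilon|^{p-2}W_\varepsilon]\,T_\varepsilon(\nabla u_\varepsilon-W_\varepsilon)\to 0$ by computing $\lim\int T_\varepsilon(|\nabla u_\varepsilon|^{p-2}\nabla u_\varepsilon)\,T_\varepsilon(\nabla u_\varepsilon)$ through the energy identity (take $\varphi=u_\varepsilon$) together with \eqref{prehomogenized}, cancel the cross terms using the cell equation, then deduce strong convergence of $T_\varepsilon\nabla u_\varepsilon-T_\varepsilon W_\varepsilon$ in $L^p$ via Proposition \ref{proposicaoplaplaciano} (splitting $p\ge2$ and $1<p<2$ as in \eqref{eq823}--\eqref{824}), and finally $a_0=|(\partial_x u+\partial_{y_1}u_1,0)|^{p-2}(\partial_x u+\partial_{y_1}u_1,0)$ by Corollary \ref{corolarioplaplaciano} and density of smooth test functions.

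The computation that is genuinely special to $\alpha<1$ — and which I expect to be the main point rather than the main obstacle — is solving the cell problem explicitly. Since $\partial_{y_2}u_1=0$ and $a_0$ has zero second component, the cell equation reduces to $\partial_{y_1}\big(|\partial_x u+\partial_{y_1}u_1|^{p-2}(\partial_x u+\partial_{y_1}u_1)\,g(y_1)\big)=0$ on $(0,L)$ (the factor $g(y_1)$ coming from integrating $dy_2$ over the fibre $Y^*$), so $|\partial_x u+\partial_{y_1}u_1|^{p-2}(\partial_x u+\partial_{y_1}u_1)=c(x)/g(y_1)$ for some constant (in $y_1$) $c(x)$; inverting via $a_{p'}$ gives $\partial_x u+\partial_{y_1}u_1=a_{p'}(c(x))\,g(y_1)^{-1/(p-1)}=a_{p'}(c(x))\,g(y_1)^{-(p'-1)}$, and the periodicity constraint $\langle\partial_{y_1}u_1\rangle_{(0,L)}=0$ pins down $a_{p'}(c(x))=\partial_x u/\langle g^{-(p'-1)}\rangle_{(0,L)}$. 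Substituting back into the effective flux $\frac{1}{|Y^*|}\int_{Y^*}a_0\,dY=\frac{1}{\langle g\rangle_{(0,L)}L}\int_0^L \frac{c(x)}{g(y_1)}g(y_1)\,dy_1=\frac{c(x)}{\langle g\rangle_{(0,L)}}$ and expressing $c(x)=a_p(a_{p'}(c(x)))=|\partial_x u|^{p-2}\partial_x u\,\langle g^{-(p'-1)}\rangle_{(0,L)}^{-(p-1)}$ yields precisely the coefficient $q=\big(\langle g\rangle_{(0,L)}\langle 1/g^{p'-1}\rangle_{(0,L)}^{p-1}\big)^{-1}$; the forcing term $\bar f=\frac{1}{|Y^*|}\int_{Y^*}\hat f\,dY$ drops out of \eqref{prehomogenized} after integrating the $Y^*$-independent test functions. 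Well-posedness of the limit problem follows from Minty--Browder, which upgrades the subsequential convergence to convergence of the whole sequence; the only mild care needed is to justify that $g^{-(p'-1)}\in L^1(0,L)$ and is bounded away from $0$ and $\infty$, which is immediate from $(\mathbf{H_g})$.
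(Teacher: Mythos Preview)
Your proposal is correct and follows essentially the same approach as the paper's own proof: uniform bounds via Proposition \ref{uniformlimtation}, compactness via Theorem \ref{thmalpha<1}(b), the unfolded variational formulation tested first against $\varphi\in W^{1,p}(0,1)$ and then against oscillating test functions $v_\varepsilon=\varepsilon^\alpha\phi(x)\psi(x/\varepsilon^\alpha)$, identification of $a_0$ by the Minty monotonicity trick with the corrector $W_\varepsilon=(\partial_x u+\partial_{y_1}u_1(x,x/\varepsilon^\alpha),0)$, and finally the explicit solution of the one-dimensional cell problem via the constancy of $g(y_1)\,|\partial_x u+\partial_{y_1}u_1|^{p-2}(\partial_x u+\partial_{y_1}u_1)$ in $y_1$ together with the periodicity constraint on $u_1$. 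Your computation of the effective flux as $c(x)/\langle g\rangle_{(0,L)}$ is a slightly cleaner packaging of the same algebra the paper carries out by substituting the expression for $\partial_{y_1}u_1$ back into \eqref{prehomogenized1}.
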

	\begin{proof}
		From Proposition \ref{uniformlimtation} and Theorem \eqref{thmalpha<1}, there exist $u\in W^{1,p}(0,1)$ and $u_1\in L^p((0,1);W^{1,p}_\#(Y^*))$ with $\partial_{y_2}u_1=0$ such that, up to subsequences,
		\begin{eqnarray*}
		T_\varepsilon u_\varepsilon \rightharpoonup u\mbox{ weakly in } L^p\left((0,1);W^{1,p}(Y^*)\right),\\
		\label{1234}T_\varepsilon (|u_\varepsilon|^{p-2}u_\varepsilon)\rightarrow |u|^{p-2}u\mbox{ strongly in }L^p\left((0,1)\times Y^*\right),\\
		T_\varepsilon\partial_x u_\varepsilon\rightharpoonup \partial_x u+\partial_{y_1}u_1 \mbox{ weakly in }L^p\left((0,1)\times Y^*\right),\\
		T_\varepsilon\left(\left|\nabla u_\varepsilon\right|^{p-2}\nabla u_\varepsilon\right)\rightharpoonup a_0 \mbox{ weakly in }L^{p'}\left((0,1)\times Y^*\right)^2.
		\end{eqnarray*}
				
		We rewrite \eqref{variationalproblem01} and obtain as in \eqref{variationalunfolded} that 
		\begin{eqnarray}
			&&\nonumber\int_{(0,1\times Y^*)}T_\varepsilon\left(\left|\nabla u_\varepsilon\right|^{p-2}\nabla u_\varepsilon\right)T_\varepsilon \nabla \varphi dxdy_1dy_2+\frac{L}{\varepsilon}\int_{R_1^\varepsilon}\left|\nabla u_\varepsilon\right|^{p-2}\nabla u_\varepsilon\nabla\varphi dxdy \nonumber \\
			&&+\nonumber\int_{(0,1\times Y^*)}T_\varepsilon\left(\left| u_\varepsilon\right|^{p-2} u_\varepsilon\right)T_\varepsilon\varphi dxdy_1dy_2+\frac{L}{\varepsilon}\int_{R_1^\varepsilon}\left| u_\varepsilon\right|^{p-2} u_\varepsilon\varphi dxdy \nonumber\\
			&&=\int_{(0,1\times Y^*)}T_\varepsilon f^\varepsilon T_\varepsilon \varphi dxdy_1dy_2+\frac{L}{\varepsilon}\int_{R_1^\varepsilon} f^\varepsilon\varphi dxdy, \quad \forall \varphi \in W^{1,p}(0,1).  \label{eq1147}
		\end{eqnarray}
		Hence, we can pass to the limit to get 
		\begin{eqnarray}\label{prehomogenizedalfa<1}
		&&\nonumber\int_{(0,1)\times Y^*}a_0 \nabla \varphi dxdy_1dy_2 +\int_{(0,1)\times Y^*}|u|^{p-2}u\varphi dxdy_1dy_2 \\
		&& \qquad \qquad =\int_{(0,1)\times Y^*}\hat{f}\varphi dxdy_1dy_2.
		\end{eqnarray}
		
		Now, let $\phi\in C_0^\infty(0,1)$ and $\psi\in W^{1,p}_\#(Y^*)$ with $\partial_{y_2}\psi=0$ (that is, assume $\psi(y_1,y_2)=\psi(y_1)$). 
		Define the sequence
		\begin{equation*}
		v_\varepsilon(x,y)=\varepsilon^\alpha\phi(x)\psi\left(\frac{x}{\varepsilon^\alpha}\right).
		\end{equation*}
		Then, we have that
		\begin{eqnarray*}
		&&T_\varepsilon v_\varepsilon\rightarrow 0\mbox{ strongly in }L^p\left((0,1)\times Y^*\right),\\
		&&T_\varepsilon(\partial_x v_\varepsilon)\rightarrow \phi \, \partial_{y_1}\psi \mbox{ strongly in }L^p\left((0,1)\times Y^*\right),\\
		&&T_\varepsilon(\partial_y v_\varepsilon)=0.
		\end{eqnarray*}
		
		Next, taking $v_\varepsilon$ as a test function in \eqref{eq1147}, we have, as $\varepsilon\rightarrow 0$, that
		\begin{eqnarray*}
		\int_{(0,1)\times Y^*}a_0 \cdot (1,0) \, \phi(x) \, \partial_{y_1}\psi \, dxdy_1dy_2=0.
		\end{eqnarray*}
		Hence, by the density of the tensor product $C_0^\infty(0,1)\otimes W^{1,p}_\#(Y^*)$ in $L^p((0,1);W^{1,p}_\#(Y^*))$, we obtain 
		\begin{eqnarray}\label{preauxiliar01}
			\int_{(0,1)\times Y^*}a_0 \cdot (1,0) \, \partial_{y_1}\psi \, dxdy_1dy_2=0, \quad \forall\psi\in L^p((0,1);W^{1,p}_\#(Y^*)),
		\end{eqnarray}
		for all $\psi\in L^p((0,1);W^{1,p}_\#(Y^*))$ such that $\partial_{y_2}\psi=0$. 
		
		Now, we identify $a_0$. We argue as in the previous section. We take the limit functions $u\in W^{1,p}(0,1)$ and $u_1\in L^p((0,1);W^{1,p}_\#(Y^*))$, we extend $\partial_{y_1} u_1$ in $y_1$-direction and define the sequence
		\begin{equation} \label{W1186}
		W_\varepsilon(x)=\left(\partial_x u(x)+\partial_{y_1} u_1\left(x,\frac{x}{\varepsilon^\alpha}\right),0\right).
		\end{equation}
		Notice that by Proposition \ref{convergenceplaplacetype} we have
		\begin{eqnarray*}
			&&T_\varepsilon W_\varepsilon\rightarrow \left(\partial_x u+\partial_{y_1} u_1,0\right)\mbox{ in }L^p\left((0,1)\times Y^*\right)^2,\\
			&&T_\varepsilon\left(|W_\varepsilon|^{p-2}W_\varepsilon\right)\rightarrow \left|\partial_x u +\partial_{y_1} u_1\right|^{p-2}\left[\left(\partial_x u+\partial_{y_1} u_1,0\right)\right]\mbox{ in }L^{p'}\left((0,1)\times Y^*\right)^2,
		\end{eqnarray*}
		and then, $T_\varepsilon (\nabla u_\varepsilon - W_\varepsilon) \rightharpoonup 0$ weakly in $L^p((0,1) \times Y^*)$.
		
		Analogously to the resonant case, we have as in \eqref{694} and \eqref{eq829} that
		\begin{eqnarray*}
		&&\nonumber\int_{(0,1)\times Y^*}T_\varepsilon\left[|\nabla u_\varepsilon|^{p-2}\nabla u_\varepsilon-|W_\varepsilon|^{p-2}W_\varepsilon\right]T_\varepsilon\left(\nabla u_\varepsilon-W_\varepsilon\right)dxdy_1dy_2\rightarrow 0,\\
		&&\int_{(0,1)\times Y^*}T_\varepsilon\left[|\nabla u_\varepsilon|^{p-2}\nabla u_\varepsilon-|W_\varepsilon|^{p-2}W_\varepsilon\right]\varphi dxdy_1dy_2\rightarrow 0,
		\end{eqnarray*}
		for any $\varphi\in C_0^\infty\left((0,1)\times Y^*\right)^2$. Therefore, 
		\begin{equation*}
		T_\varepsilon\left(|\nabla u_\varepsilon|^{p-2}\nabla u_\varepsilon\right)\rightharpoonup \left|\partial_x u+\partial_{y_1} u_1\right|^{p-2}\left(\partial_x u +\partial_{y_1} u_1,0\right), \quad \textrm{ weakly in } L^{p'}\left((0,1)\times Y^*\right)^2
		\end{equation*}
		and then, 
		\begin{equation}\label{456}
		a_0=\left|\partial_x u+\partial_{y_1} u_1\right|^{p-2}\left(\partial_x u+\partial_{y_1} u_1,0\right)\mbox{ a.e }(0,1)\times Y^*.
		\end{equation}
		
		Thus, due to \eqref{prehomogenizedalfa<1}, we get 
		\begin{eqnarray}\label{prehomogenized1}
		&&\int_{(0,1)\times Y^*}\left(\hat{f}-|u|^{p-2}u\right)\varphi \, dxdy_1dy_2\\
		&&\qquad \qquad - \nonumber\int_{(0,1)\times Y^*}\left|\partial_xu +\partial_{y_1} u_1\right|^{p-2}\left(\partial_xu+\partial_{y_1} u_1\right)\partial_x \varphi \, dxdy_1dy_2= 0,
		\end{eqnarray}
		for all $\varphi \in W^{1,p}(0,1)$. 
		
		Since $u \in W^{1,p}(0,1)$, $\partial_{y_2}\psi=0$ and $\partial_{y_2} u_1 = 0$, we can take $\psi\in L^p((0,1);W^{1,p}_\#(0,L))$, and then, using \eqref{456}, we can rewrite \eqref{preauxiliar01} as
		\begin{eqnarray*} 
		\int_{(0,1)\times (0,L)}\left|\partial_xu+\partial_{y_1} u_1\right|^{p-2}\left(\partial_xu+\partial_{y_1} u_1\right)\, \partial_{y_1}\psi \, g(y_1) \,dxdy_1=0,
		\end{eqnarray*}
		for all $\psi\in L^p((0,1);W^{1,p}_\#(0,L))$. 
		
		Hence, treating $x$ as a parameter in the above equation we have that there exists a function $T$ depending only on $x$ such that
		\begin{equation*}
		T(x)=\left|\partial_xu+\partial_{y_1} u_1\right|^{p-2}\left(\partial_xu+\partial_{y_1} u_1\right) \, g(y_1)\mbox{ a.e }(0,L).
		\end{equation*}
		By Corollary \ref{corolarioplaplaciano}, the composition of $a_{p}$ and $a_{{p'}}$  gives the identity. Thus, we can rewrite the above equality as
		\begin{equation}
		\label{weaku1}
		\partial_xu+\partial_{y_1} u_1=\left|\dfrac{T}{g}\right|^{p'-2}\dfrac{T}{g}.
		\end{equation}
		Using the periodicity of $u_1$, we get
		\begin{equation*}
		\begin{gathered}
		0=\int_0^L\partial_{y_1} u_1 dy_1=\int_0^L\left|\dfrac{T}{g}\right|^{p'-2}\dfrac{T}{g} -\partial_xu\, dy_1,
		\end{gathered}
		\end{equation*}
		
		which means that
		$$
		|T|^{p'-2}T=\partial_xu/\left\langle \frac{1}{|g|^{p'-2}g}\right\rangle_{(0,L)}
		$$
		Putting this together with \eqref{weaku1}, one can get
		\begin{equation*}
		\partial_{y_1}u_1=\partial_xu \dfrac{1}{|g|^{p'-2}g\left\langle 1/|g|^{p'-2}g\right\rangle_{(0,L)}}-\partial_xu.
		\end{equation*}
		Thus, using the above equality and the fact that $g>0$, we can rewrite \eqref{prehomogenized1} as 
		\begin{equation*}
		\int_0^1\dfrac{1}{\langle g\rangle_{(0,L)}\left\langle 1/g^{p'-1}\right\rangle_{(0,L)}^{p-1}}|\partial_x u|^{p-2}\partial_x u\partial_x \varphi+|u|^{p-2} u\varphi dx=\int_0^1\bar{f} \varphi dx,
		\end{equation*} 
		for all $\varphi\in W^{1,p}(0,1)$, where
		$$
		\bar{f}=\dfrac{1}{|Y^*|}\int_{Y^*}\hat{f}dy_1dy_2.
		$$
	\end{proof}

	As in Corollary \ref{cor1045}, we get the following corrector result to \eqref{problem01} for $\alpha \in (0,1)$:

	\begin{corollary}\label{correctoralfa<1}
		The function $W_\varepsilon$ defined in \eqref{W1186} satisfies
		$$
		\left|\left|T_\varepsilon \nabla u_\varepsilon-T_\varepsilon W_\varepsilon\right|\right|_{L^p\left((0,1)\times Y^*\right)}\rightarrow 0.
		$$
	\end{corollary}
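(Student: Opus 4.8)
The plan is to mimic closely the proof of Corollary \ref{cor1045} from the resonant case, since the structure of the argument for $\alpha<1$ is entirely parallel. The key point is that in the proof of Theorem \ref{weakresult} we already established the convergence
\begin{equation*}
\int_{(0,1)\times Y^*}T_\varepsilon\left[|\nabla u_\varepsilon|^{p-2}\nabla u_\varepsilon-|W_\varepsilon|^{p-2}W_\varepsilon\right]T_\varepsilon\left(\nabla u_\varepsilon-W_\varepsilon\right)dxdy_1dy_2\rightarrow 0,
\end{equation*}
with $W_\varepsilon$ given by \eqref{W1186}. So the first step is simply to recall this fact and note that it is the exact analogue of \eqref{694}.

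The second step is to convert this monotonicity-type quantity into control of $\|T_\varepsilon\nabla u_\varepsilon - T_\varepsilon W_\varepsilon\|_{L^p}$. For $p\geq 2$ this is immediate from the first inequality in Proposition \ref{proposicaoplaplaciano}, exactly as in \eqref{eq823}: one has
\begin{equation*}
\int_{(0,1)\times Y^*}\left|T_\varepsilon \nabla u_\varepsilon-T_\varepsilon W_\varepsilon\right|^p dxdy_1dy_2 \leq c\int_{(0,1)\times Y^*}T_\varepsilon\left(|\nabla u_\varepsilon|^{p-2}\nabla u_\varepsilon-|W_\varepsilon|^{p-2}W_\varepsilon\right)\left(T_\varepsilon \nabla u_\varepsilon-T_\varepsilon W_\varepsilon\right) dxdy_1dy_2 \to 0.
\end{equation*}
For $1<p<2$ one repeats the H\"older argument of \eqref{824}: insert the factor $(1+|T_\varepsilon\nabla u_\varepsilon|+|T_\varepsilon W_\varepsilon|)^{(p-2)p/2}$ over itself, apply H\"older with exponents $2/p$ and $2/(2-p)$, use the second inequality in Proposition \ref{proposicaoplaplaciano} to bound the first factor by the monotonicity quantity, and observe that the second factor stays bounded because $\|T_\varepsilon\nabla u_\varepsilon\|_{L^p((0,1)\times Y^*)}$ and $\|T_\varepsilon W_\varepsilon\|_{L^p((0,1)\times Y^*)}$ are uniformly bounded (the former by Proposition \ref{uniformlimtation} together with property 6 of Proposition \ref{unfoldingproperties}, the latter by the strong convergence $T_\varepsilon W_\varepsilon \to (\partial_x u+\partial_{y_1}u_1,0)$). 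This yields the claim in both ranges of $p$.

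I do not expect any genuine obstacle here: the only thing to be slightly careful about is that $W_\varepsilon$ in \eqref{W1186} is a function of $x$ alone (its $y$-component vanishes and $\partial_{y_2}u_1=0$), so the relevant convergences of $T_\varepsilon W_\varepsilon$ and $T_\varepsilon(|W_\varepsilon|^{p-2}W_\varepsilon)$ are the ones already recorded just before \eqref{456}, obtained from Proposition \ref{convergenceplaplacetype}. Everything else is a verbatim transcription of the argument in Corollary \ref{cor1045}. Hence the proof reduces to: invoke the displayed monotonicity convergence from the proof of Theorem \ref{weakresult}, then apply Proposition \ref{proposicaoplaplaciano} exactly as in \eqref{eq823} for $p\geq2$ and as in \eqref{824} for $1<p\leq2$.
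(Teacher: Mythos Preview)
Your proposal is correct and follows essentially the same approach as the paper: the paper simply states that the result is obtained ``as in Corollary \ref{cor1045}'', whose proof in turn just invokes \eqref{eq823} for $p\geq 2$ and \eqref{824} for $1<p\leq 2$, both of which rest on the monotonicity convergence already established in the proof of Theorem \ref{weakresult}. Your write-up is, if anything, more detailed than what the paper provides.
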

	\begin{remark}
		From Corollary \ref{correctoralfa<1} and \cite[Proposition 2.17]{AM2}, we have that 
		$$
		|||\nabla u_\varepsilon-W_\varepsilon  |||_{L^p(R^\varepsilon)}\rightarrow 0.
		$$
	\end{remark}

	\section{The strong oscillation case: $\alpha>1$.} \label{seal>1}
	
	In this section we analyze the behavior of the solutions of \eqref{problem01} as the upper boundary of the thin domains presents a very high oscillatory boundary. Then, the thin domain is defined as follows
	$$
	R^\varepsilon=\left\{(x,y)\in \mathbb{R}^2: 0<x<1, 0<y<\varepsilon g\left(\dfrac{x}{\varepsilon^\alpha}\right)\right\}.
	$$
	where $\alpha>1$ and $g$ satisfies the hypothesis ($\mathbf{H_g}$).
	
	We would like to point out that even though we use the unfolding operator to get the homogenized limit problem, the approach is different to the two previous cases. The roughness is so strong that we can not obtain a compactness theorem as in the previous results Theorems \ref{thmressonant} and \ref{thmalpha<1}. To overcome this difficulty we will divide the thin domain in two thin parts:
	\begin{eqnarray*}
	&&R_+^\varepsilon=\left\lbrace(x,y)\in \mathbb{R}^2:x\in(0,1),\varepsilon g_0<y<\varepsilon g(x/\varepsilon^\alpha)\right\rbrace,\\
	&&R_-^\varepsilon=\left\lbrace(x,y)\in \mathbb{R}^2:x\in(0,1),0<y<g_0\right\rbrace.
	\end{eqnarray*}
	Notice that 
	\begin{equation*}
	R^\varepsilon=\mbox{ Int }\left(\overline{R_+^\varepsilon}\bigcup \overline{R_-^\varepsilon}\right).
	\end{equation*}
	
	Moreover, let us introduce the following sets independent of $\varepsilon$
	\begin{eqnarray*}
	&&Y^*=\left\lbrace(y_1,y_2)\in\mathbb{R}^2:0<y_1<L,0<y_2<g(y_1)\right\rbrace,\\
	&&Y^*_+=\left\lbrace(y_1,y_2)\in\mathbb{R}^2:0<y_1<L,g_0<y_2<g(y_1)\right\rbrace,\\
	&&R_-=\left\lbrace(x,y)\in\mathbb{R}^2:x\in(0,1),0<y<g_0\right\rbrace,\\
	&&R_+=\left\lbrace(x,y)\in\mathbb{R}^2:x\in(0,1),g_0<y<g_1\right\rbrace.
	\end{eqnarray*}
	
	\begin{remark} Notice that the reference cell for the unfolding operator restricted to the oscillating part, $Y^*_+$, may be disconnected since $g_0 = \displaystyle\min_{x\in\mathbb{R}}{g(x)}.$
	\end{remark}
	
	We first introduce an operator which allows us to rescale $R^\varepsilon_-$ in order to work over a fixed domain.
	
	\begin{definition}
		Let $\varphi\in L^p(R_-^\varepsilon)$. The rescaling operator $\Pi_\varepsilon:L^p(R_-^\varepsilon)\rightarrow L^p(R_-)$ is defined by
		\begin{equation*}
		\Pi_\varepsilon(\varphi)(x,y)=\varphi(x,\varepsilon y), \forall(x,y)\in R_-.
		\end{equation*}
	\end{definition}
	\begin{proposition}\label{rescalingproperties}
		The rescaling operator satisfies the following properties:
		\begin{enumerate}
			\item Let $\varphi \in L^1(R_-^\varepsilon)$. Then
			\begin{equation*}
			\int_{R_-}\Pi_\varepsilon(\varphi)(x,y)dxdy=\frac{1}{\varepsilon}\int_{R^\varepsilon_-}\varphi(x,y)dxdy.
			\end{equation*}
			\item $\Pi_\varepsilon$ is linear and continuous from $L^p(R_-^\varepsilon)$ to $L^p(R_-)$, $1\leq p\leq \infty$. In addition, the following relationship existis between their norms:
			\begin{eqnarray*}
				\left|\left|\Pi_\varepsilon(\cdot)\right|\right|_{L^p(R_-)}=\left|\left|\left|\cdot\right|\right|\right|_{L^p(R_-^\varepsilon)},\\
				\left|\left|\Pi_\varepsilon(\cdot)\right|\right|_{L^\infty(R_-)}=\left|\left|\left|\cdot\right|\right|\right|_{L^\infty(R_-^\varepsilon)}.
			\end{eqnarray*}
			\item For $\varphi\in W^{1,p}(R_-^\varepsilon)$, $1\leq p\leq \infty$, we have 
			\begin{equation*}
			\partial_x\Pi_\varepsilon\varphi=\Pi_\varepsilon\partial_x\varphi, \quad \partial_y\Pi_\varepsilon\varphi=\varepsilon\Pi_\varepsilon\partial_y\varphi.
			\end{equation*}
			\item Let $\phi\in L^p(0,1)$, $1\leq p\leq \infty$. Then, considering $\phi$ as a function defined in $R_-^\varepsilon$, we have $\Pi_\varepsilon\phi=\phi$.
		\end{enumerate}
	\end{proposition}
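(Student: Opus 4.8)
The plan is to deduce all four properties from the single affine change of variables $\Phi_\varepsilon\colon R_-\to R_-^\varepsilon$, $\Phi_\varepsilon(x,y)=(x,\varepsilon y)$, which is a bijection with constant Jacobian $\varepsilon>0$ (here $R_-=(0,1)\times(0,g_0)$ and $R_-^\varepsilon=(0,1)\times(0,\varepsilon g_0)$). First I would prove property 1, which is the core identity: for $\varphi\in L^1(R_-^\varepsilon)$, substituting $s=\varepsilon y$ in the inner integral gives
\begin{equation*}
\int_{R_-}\Pi_\varepsilon(\varphi)(x,y)\,dxdy=\int_0^1\!\!\int_0^{g_0}\varphi(x,\varepsilon y)\,dy\,dx=\frac1\varepsilon\int_0^1\!\!\int_0^{\varepsilon g_0}\varphi(x,s)\,ds\,dx=\frac1\varepsilon\int_{R_-^\varepsilon}\varphi(x,y)\,dxdy.
\end{equation*}

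Next I would establish property 2. Linearity of $\Pi_\varepsilon$ is immediate from its pointwise definition. For $1\le p<\infty$, since $\Pi_\varepsilon(|\varphi|^p)=|\Pi_\varepsilon\varphi|^p$ pointwise, applying property 1 to $|\varphi|^p\in L^1(R_-^\varepsilon)$ yields $\|\Pi_\varepsilon\varphi\|_{L^p(R_-)}^p=\varepsilon^{-1}\|\varphi\|_{L^p(R_-^\varepsilon)}^p=|||\varphi|||_{L^p(R_-^\varepsilon)}^p$, so $\Pi_\varepsilon$ is an isometry for the rescaled norm and in particular bounded, hence well defined on all of $L^p(R_-^\varepsilon)$. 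For $p=\infty$, since $\Phi_\varepsilon$ and its inverse map Lebesgue-null sets to Lebesgue-null sets, the essential suprema of $\varphi$ and $\Pi_\varepsilon\varphi$ coincide, giving $\|\Pi_\varepsilon\varphi\|_{L^\infty(R_-)}=|||\varphi|||_{L^\infty(R_-^\varepsilon)}$.

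For property 3 I would first check the identities on $\varphi\in C^\infty(\overline{R_-^\varepsilon})$ by the classical chain rule, namely $\partial_x[\varphi(x,\varepsilon y)]=(\partial_x\varphi)(x,\varepsilon y)=\Pi_\varepsilon(\partial_x\varphi)(x,y)$ and $\partial_y[\varphi(x,\varepsilon y)]=\varepsilon(\partial_y\varphi)(x,\varepsilon y)=\varepsilon\,\Pi_\varepsilon(\partial_y\varphi)(x,y)$, and then pass to general $\varphi\in W^{1,p}(R_-^\varepsilon)$ either by density of smooth functions together with the continuity from property 2, or directly: test against $\psi\in C_0^\infty(R_-)$, change variables via $\Phi_\varepsilon$ to move the integration onto $R_-^\varepsilon$, and invoke the definition of the weak derivatives of $\varphi$ there. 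Finally, property 4 is immediate: if $\phi=\phi(x)$ does not depend on the second variable, then $\Pi_\varepsilon\phi(x,y)=\phi(x,\varepsilon y)=\phi(x)$.

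I do not expect any genuine obstacle here; the computations are elementary consequences of the change of variables $\Phi_\varepsilon$. The only point needing a little care is the extension step in property 3, where one must ensure that the \emph{weak} derivatives transform as claimed — this is exactly what the change-of-variables/density argument above takes care of.
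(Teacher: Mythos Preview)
Your proposal is correct and follows the same approach as the paper, which simply states that ``the proof follows directly from the definition''; you have merely spelled out the change-of-variables computation $\Phi_\varepsilon(x,y)=(x,\varepsilon y)$ that the authors leave implicit. Your treatment of the weak-derivative case in property~3 via density or direct integration by parts is the natural way to make rigorous what the paper takes as obvious.
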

	\begin{proof}
		The proof follows directly from the definition.
	\end{proof}

	Our homogenization result is the following one.
	\begin{theorem}\label{strongresult}
		Let $u_\varepsilon$ be the solution of the problem \eqref{problem01} with $f^\varepsilon\in L^{p'}(R^\varepsilon)$ and $\left|\left|f^\varepsilon\right|\right|_{L^{p'}(R^\varepsilon)}\leq c$, for some $c>0$ independent of $\varepsilon$. Suppose the function
		$$
		\hat{f}^\varepsilon(x)=\frac{1}{\varepsilon}\int_0^{\varepsilon g\left(\frac{x}{\varepsilon^\alpha}\right)}f(x,y)dy
		$$
		satisfies
		$$
		\hat{f}^\varepsilon\rightharpoonup\hat{f}\mbox{ weakly in }L^{p'}(0,1).
		$$
		
		Then, there exists a unique $u\in W^{1,p}(0,1)$ such that
		\begin{equation*}
		\begin{gathered}
			T_\varepsilon u_\varepsilon\rightharpoonup u \mbox{ weakly in }L^p((0,1);W^{1,p}(Y^*)),\\
	\textrm{ and } \quad 		\left|\left|\left|u_\varepsilon-u\right|\right|\right|_{L^p(R^\varepsilon)}\rightarrow 0,
			\end{gathered}
		\end{equation*}
		as $\varepsilon\rightarrow 0$. 
		
		Moreover, $u$ is the unique solution of the one dimensional $p$-Laplacian problem 
		\begin{equation*}
		\int_{0}^{1}\left(\frac{g_{0}}{\langle g \rangle_{(0,L)}}|\partial_{x}u|^{p-2}\partial_{x}u\partial_{x}\varphi +|u|^{p-2}u \varphi\right) dx=\int_{0}^{1}\bar{f}\varphi dx
		\end{equation*}  
		\begin{equation}  \label{falpha>1}
		\bar f = \frac{\hat f}{\langle g \rangle_{(0,L)}}.
		\end{equation}  
	\end{theorem}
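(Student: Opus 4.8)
The plan is to adapt the unfolding strategy of the previous two sections to the decomposition $R^\varepsilon = \mathrm{Int}(\overline{R_+^\varepsilon}\cup\overline{R_-^\varepsilon})$, treating the lower, non-oscillating part $R_-^\varepsilon$ with the rescaling operator $\Pi_\varepsilon$ and the upper oscillating part $R_+^\varepsilon$ directly by a smallness estimate. First I would record the a priori bounds: by Proposition \ref{uniformlimtation}, $|||u_\varepsilon|||_{W^{1,p}(R^\varepsilon)}\le c$ and $|||\,|\nabla u_\varepsilon|^{p-2}\nabla u_\varepsilon\,|||_{L^{p'}(R^\varepsilon)}\le c$. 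Applying $\Pi_\varepsilon$ to $u_\varepsilon|_{R_-^\varepsilon}$ and using Proposition \ref{rescalingproperties}(2)--(3), the sequence $\Pi_\varepsilon u_\varepsilon$ is bounded in $W^{1,p}(R_-)$ with $\partial_y\Pi_\varepsilon u_\varepsilon = \varepsilon\,\Pi_\varepsilon\partial_y u_\varepsilon \to 0$ in $L^p(R_-)$; hence, up to a subsequence, $\Pi_\varepsilon u_\varepsilon \rightharpoonup u$ weakly in $W^{1,p}(R_-)$ with $u$ independent of $y$, i.e.\ $u\in W^{1,p}(0,1)$, and $\Pi_\varepsilon(|\nabla u_\varepsilon|^{p-2}\nabla u_\varepsilon)\rightharpoonup \xi_0$ weakly in $L^{p'}(R_-)^2$. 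On the oscillating strip $R_+^\varepsilon$ one shows $|R_+^\varepsilon|/\varepsilon \le (g_1-g_0)$, so the contribution of $R_+^\varepsilon$ to all integrals is uniformly controlled; combined with the u.c.i.\ machinery this lets us discard the $R_1^\varepsilon$ and $R_+^\varepsilon$ error terms when passing to the limit.

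Next I would pass to the limit in the variational formulation \eqref{1001}. Splitting $\int_{R^\varepsilon} = \int_{R_-^\varepsilon} + \int_{R_+^\varepsilon}$, multiplying by $1/\varepsilon$, rewriting the first integral via $\Pi_\varepsilon$ and testing with $\varphi\in C^\infty([0,1])$ (viewed on $R^\varepsilon$), the $R_+^\varepsilon$-integrals vanish in the limit because $|R_+^\varepsilon|/\varepsilon$ is bounded while the test function and its derivative are fixed, together with the uniform $L^{p'}$ bound on the flux and Remark \ref{re483} giving $\Pi_\varepsilon(|u_\varepsilon|^{p-2}u_\varepsilon)\to |u|^{p-2}u$ strongly in $L^{p'}(R_-)$. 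This yields
\begin{equation*}
\int_{R_-}\left(\xi_0\cdot\nabla\varphi + |u|^{p-2}u\,\varphi\right)dxdy = \int_{R_-}\hat f\,\varphi\,dxdy,
\end{equation*}
and since everything is $y$-independent and $|R_-| = g_0$, the essential point is to identify the first component of $\xi_0$ as $(g_0/\langle g\rangle_{(0,L)})\,|u'|^{p-2}u'$ up to the normalization by $\langle g\rangle_{(0,L)}$ coming from the rescaled measure $\rho_\varepsilon(R^\varepsilon)\to\langle g\rangle_{(0,L)}$; similarly $\bar f = \hat f/\langle g\rangle_{(0,L)}$ arises from $\frac1\varepsilon\int_{R^\varepsilon} f^\varepsilon\varphi = \int_0^1 \hat f^\varepsilon\varphi\,dx \rightharpoonup \int_0^1\hat f\varphi\,dx$ and the normalization.

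To identify $\xi_0$ I would run a Minty--Browder / monotonicity argument of the same flavour as in Section \ref{seal1}. The natural comparison function here is $W_\varepsilon = (\partial_x u(x),0)$ extended to $R^\varepsilon$; using the test function $u_\varepsilon$ itself in \eqref{1001}, the already-established limit identity, and the convergence $\Pi_\varepsilon u_\varepsilon\rightharpoonup u$, one shows
\begin{equation*}
\limsup_{\varepsilon\to 0}\ \frac{1}{\varepsilon}\int_{R^\varepsilon_-}\!\!\left(|\nabla u_\varepsilon|^{p-2}\nabla u_\varepsilon - |W_\varepsilon|^{p-2}W_\varepsilon\right)\cdot(\nabla u_\varepsilon - W_\varepsilon)\,dxdy \le 0,
\end{equation*}
while the $R_+^\varepsilon$-part is negligible by the volume estimate. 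By Proposition \ref{proposicaoplaplaciano} this forces $\Pi_\varepsilon\nabla u_\varepsilon \to (u',0)$ strongly in $L^p(R_-)^2$ (treating the cases $p\ge2$ and $1<p<2$ as in \eqref{eq823}--\eqref{824}), whence $\xi_0 = (|u'|^{p-2}u',0)$ and, after the measure normalization, the homogenized coefficient is $g_0/\langle g\rangle_{(0,L)}$. Well-posedness of the limit problem then identifies the whole sequence (not just a subsequence), and $|||u_\varepsilon - u|||_{L^p(R^\varepsilon)}\to 0$ follows by combining the strong convergence on $R_-^\varepsilon$ with Proposition \ref{propositionconvergence} (or directly from the $W^{1,p}$ bound and Poincaré in the thin direction) on $R_+^\varepsilon$.

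The main obstacle I expect is precisely the treatment of the oscillating strip $R_+^\varepsilon$: since no compactness theorem for $T_\varepsilon\nabla u_\varepsilon$ is available there (as the authors note, $Y_+^*$ can even be disconnected), one cannot identify a corrector on $R_+^\varepsilon$ and must instead argue that this region contributes nothing to the limit. Making that rigorous requires a careful quantitative estimate — essentially that $u_\varepsilon$ restricted to $R_+^\varepsilon$ differs from its trace on the line $\{y=\varepsilon g_0\}$ by something controlled in the $|||\cdot|||$ norm using the thinness $\varepsilon(g_1-g_0)$ of the strip together with the uniform gradient bound — and coupling it with the u.c.i.\ criterion to kill all cross terms. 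Once the oscillating part is shown to be asymptotically inert, the rest is a routine monotone-operator passage to the limit on the fixed cylinder $R_-$.
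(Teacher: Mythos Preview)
Your overall architecture (split $R^\varepsilon=R_-^\varepsilon\cup R_+^\varepsilon$, rescale the bottom via $\Pi_\varepsilon$, run a Minty argument there) matches the paper, but the way you dispose of the oscillating strip $R_+^\varepsilon$ is a genuine gap.

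You repeatedly invoke that $|R_+^\varepsilon|/\varepsilon\le g_1-g_0$ to claim the $R_+^\varepsilon$-integrals ``vanish in the limit'' or are ``negligible by the volume estimate''. But $|R_+^\varepsilon|/\varepsilon$ is merely \emph{bounded}, not small: it converges to $\langle g\rangle_{(0,L)}-g_0>0$. Combined only with the uniform $L^{p'}$ bound on the flux, H\"older gives that $\tfrac1\varepsilon\int_{R_+^\varepsilon}|\nabla u_\varepsilon|^{p-2}\partial_x u_\varepsilon\,\varphi'(x)\,dxdy$ is $O(1)$, not $o(1)$. No Poincar\'e inequality in the thin direction helps either, since what is needed is a statement about $\partial_x u_\varepsilon$, not about $u_\varepsilon$ minus its trace. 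Applied consistently your argument would also discard the zero-order and forcing contributions from $R_+^\varepsilon$, which \emph{do} survive: the factor $\langle g\rangle_{(0,L)}$ (not $g_0$) in front of $|u|^{p-2}u$ and in $\bar f=\hat f/\langle g\rangle_{(0,L)}$ comes precisely from integrating over the whole $R^\varepsilon$, including $R_+^\varepsilon$. So your displayed limit identity on $R_-$ is not the right one.

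What the paper actually does for $R_+^\varepsilon$ is a structural argument, not a volume estimate. Using that $g$ attains its minimum $g_0$ at some $y_0\in[0,L)$, the vertical segment $\{y_1=y_0\}\times(g_0,g_1)$ misses $Y_+^*$; one then builds test functions $\varphi^\varepsilon(x,y)=\varepsilon^\alpha\tilde\varphi(x,y/\varepsilon)\psi(\{x/\varepsilon^\alpha\})$ with $\psi$ a primitive of an arbitrary $\phi\in C_0^\infty$ on each side of $y_0$. Because $\alpha>1$, $T_\varepsilon^+(\partial_y\varphi^\varepsilon)\to 0$ while $T_\varepsilon^+(\partial_x\varphi^\varepsilon)\to\phi(y_1)\varphi(x,y_2)$; plugging into \eqref{variationalproblem01} yields $T_\varepsilon^+\!\left(|\nabla u_\varepsilon^+|^{p-2}\partial_x u_\varepsilon^+\right)\rightharpoonup 0$ in $L^{p'}((0,1)\times Y_+^*)$. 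This is the missing ingredient: it is exactly this weak-to-zero convergence (not any smallness of volume) that kills the gradient term on $R_+^\varepsilon$ in the limit and that feeds into the monotonicity inequality \eqref{1607}--\eqref{1612} to force $\Pi_\varepsilon\nabla u_\varepsilon^-\to(u',0)$ strongly. Once you have this, the rest of your plan (the Minty step on $R_-$, the identification $\xi_0=|u'|^{p-2}(u',0)$, and Proposition \ref{propositionconvergence} for $|||u_\varepsilon-u|||_{L^p(R^\varepsilon)}\to0$) goes through as you outline.
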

	\begin{proof}
		Throughout this proof, we denote by $T_\varepsilon$ the unfolding operator associated to the cell $Y^*$, $T_\varepsilon: L^p(R^\varepsilon)\rightarrow L^p((0,1)\times Y^*)$ and by $T_\varepsilon^+$ the unfolding operator associated to the cell $Y^*_+$, $T_\varepsilon^+: L^p(R^\varepsilon_+)\rightarrow L^p((0,1)\times Y^*_+)$.
		
		By Proposition \ref{uniformlimtation}, we have uniform bound of solutions. Therefore, from Proposition \ref{propositionconvergence}, there exists $u\in W^{1,p}(0,1)$ and $a_0\in L^{p'}\left((0,1)\times Y^*\right)^2$ such that, up to subsequences,
		\begin{eqnarray*}
		&&\lim_{\varepsilon\rightarrow 0}\left|\left|\left|u_\varepsilon-u\right|\right|\right|_{L^{p}(R^\varepsilon)}=0\\
		&&T_\varepsilon u_\varepsilon\rightharpoonup u \mbox{, weakly in } L^p((0,1);W^{1,p}(Y^*))\\
		&&T_\varepsilon u_\varepsilon\rightarrow u\mbox{, strongly in } L^p((0,1)\times Y^*),\\
		&&T_\varepsilon\left(|\nabla u_\varepsilon|^{p-2}\nabla u_\varepsilon\right)\rightharpoonup a_0 \mbox{ weakly in }L^{p'}\left((0,1)\times Y^*\right)^2.
		\end{eqnarray*}
		
		In order to simplify the notations, we denote the following restrictions by
		\begin{eqnarray*}
			u_\varepsilon^+=u_\varepsilon|_{R^\varepsilon_+} \quad   \textrm{ and } \quad 			
			u_\varepsilon^-=u_\varepsilon|_{R^\varepsilon_-}.
		\end{eqnarray*}
		
		From Proposition \ref{uniformlimtation}, we have that $T_\varepsilon\left(|\nabla u_\varepsilon^+|^{p-2}\nabla u_\varepsilon^+\right)$ is uniformly bounded. Therefore, there exists $u_1\in L^p\left((0,1)\times Y^*\right)^2$ such that, up to subsequences, 
		\begin{equation} \label{1431}
		T_\varepsilon\left(|\nabla u_\varepsilon^+|^{p-2}\partial_{x}u_\varepsilon^+\right)\rightharpoonup u_1\mbox{ weakly in } L^p\left((0,1)\times Y^*\right).
		\end{equation}
		We show that $u_1(x,y_1,y_2)=0$ a.e. in $(0,1)\times Y^*_+$. To do this, we proceed as in \cite[Theorem 5.3]{AM2} using the suitable test functions there introduced.
		
		Since $g_0=\displaystyle\min_{x\in\mathbb{R}}g(x)$ and $g$ is $L$-periodic, there is, at least, a point $y_0$ such that $g(y_0)=g_0$. Furthermore, 
		\begin{equation}\label{linesegment}
		\left\lbrace(y_0,y_2)\in\mathbb{R}^2:y_2\in(g_0,g_1)\right\rbrace\cap Y^*_+=\emptyset.
		\end{equation}
		
		We analyze two cases: $y_0>0$ and $y_0=0$.
		
		First, suppose $y_0>0$. Then, for any $\phi\in C_0^\infty(0,y_0)$ we define the following function:
		\begin{equation}\label{psi}
		\psi(y_1)=\left\lbrace\begin{array}{ll}
		\displaystyle\int_0^{y_1}\phi(t)dt\mbox{, if }0\leq y_1< y_0,\\
		0\mbox{, if }y_0<y_1<L.
		\end{array}\right. 
		\end{equation}
		
		Notice that $\psi$ can be extended by $L$-periodicity and $\psi\in C^\infty[0,y_0)\cup C^\infty(y_0,L)$. Then, we consider the test function
		\begin{equation*}
		\varphi^\varepsilon(x,y)=\varepsilon^\alpha \tilde{\varphi}\left(x,\frac{y}{\varepsilon}\right)\psi\left(\left\{\frac{x}{\varepsilon^\alpha}\right\}\right),\quad(x,y)\in R^\varepsilon,
		\end{equation*}
		where $\varphi\in C_0^\infty(R_+)$, $\psi$ is defined in \eqref{psi} and $\tilde{\cdot}$ is the extension by zero. Using \eqref{linesegment} and the definition of \eqref{psi}, we have that $\varphi^\varepsilon$ is continuous in $R^\varepsilon$ for each $\varepsilon$.
		
		Now, we apply the unfolding operator to the restricition of $\varphi^\varepsilon$ to the thin domain $R^\varepsilon_+$:
		\begin{equation*}
		T_\varepsilon^+(\varphi^\varepsilon)(x,y_1,y_2)=\left\lbrace\begin{array}{ll}
		\varepsilon^\alpha\varphi\left(\varepsilon^\alpha\left[\frac{x}{\varepsilon^\alpha}\right]L+\varepsilon^\alpha y_1,y_2\right)\psi\left(y_1\right)\mbox{ for }(x,y_1,y_2)\in I_\varepsilon\times Y^*_+,\\
		0\mbox{ for }(x,y_1,y_2)\in \Lambda_\varepsilon\times Y^*_+.
		\end{array}\right.
		\end{equation*}
		By Proposition \ref{unfoldingproperties}, we have 
		\begin{eqnarray*}
			T_\varepsilon^+(\partial_x \varphi^\varepsilon)=\frac{1}{\varepsilon^\alpha}\partial_{y_1}T_\varepsilon^+\varphi^\varepsilon=\varepsilon^\alpha T_\varepsilon^+(\partial_x \varphi)\psi(y_1)+\psi'(y_1)T_\varepsilon^+(\varphi(\cdot,\cdot/\varepsilon)),\\
\textrm{ and } \quad 			T_\varepsilon^+(\partial_y\varphi^\varepsilon)=\frac{1}{\varepsilon}\partial_{y_2}T_\varepsilon^+\varphi^\varepsilon=\varepsilon^{\alpha-1}T_\varepsilon^+(\partial_y\varphi(\cdot,\cdot/\varepsilon))\psi(y_1).
		\end{eqnarray*}
		
		Since $\alpha>1$, we obtain from Proposition \ref{convergenceplaplacetype} that 
		\begin{equation} \label{1472}
		\begin{gathered}
			T_\varepsilon^+\varphi^\varepsilon\rightarrow 0\mbox{ strongly in }L^p\left((0,1)\times Y^*_+\right),\\
			T_\varepsilon^+(\partial_x \varphi^\varepsilon)\rightarrow\psi'(y_1)\varphi(x,y_2)\mbox{ strongly in }L^p\left((0,1)\times Y^*_+\right),\\
			T_\varepsilon^+(\partial_y\varphi^\varepsilon)\rightarrow 0\mbox{ strongly in }L^p\left((0,1)\times Y^*_+\right).
			\end{gathered}
		\end{equation}
		
		We use the fact that $\varphi^\varepsilon$ annihilates in $R_-^\varepsilon$ to take it as a test function in \eqref{variationalproblem01} to obtain
		\begin{equation*}
		\int_{R^\varepsilon_+}|\nabla u_\varepsilon|^{p-2}\nabla u_\varepsilon \nabla \varphi^\varepsilon +|u_\varepsilon|^{p-2}u_\varepsilon\varphi^\varepsilon dxdy=\int_{R^\varepsilon_+}f^\varepsilon\varphi^\varepsilon dxdy.
		\end{equation*}
		
		Thus, from \eqref{1431} and \eqref{1472}, we get 
		\begin{equation*}
		\int_{(0,1)\times Y^*_+}u_1\psi'(y_1)\varphi(x,y_2)dxdy_1dy_2=0.
		\end{equation*}
		That is,
		\begin{equation*}
		\int_{(0,1)\times(g_0,g_1)}\varphi(x,y_2)\left(\int_0^L \tilde{u}_1\psi'dy_1\right)dy_2dx=0, \quad \forall \varphi \in C_0^\infty(R_+).
		\end{equation*}
		Thus,
		\begin{equation*}
		\int_0^L \tilde{u}_1(x,y_1,y)\psi'dy_1=0 \quad \mbox{ a.e. }(x,y)\in R_+,
		\end{equation*}
		and then, from \eqref{psi}, we have 
		\begin{equation*}
		\int_0^{y_0} \tilde{u}_1(x,y_1,y) \, \phi(y_1) \, dy_1=0 \quad \mbox{ a.e. }(x,y)\in R_+ \textrm{ and } \forall\phi\in C_0^\infty(0,y_0).
		\end{equation*}
		Hence, we have
		\begin{equation*}
		\tilde{u}_1(x,y_1,y_2)=0\mbox{ a.e. }(x,y_1,y_2)\in (0,1)\times (0,y_0)\times (g_0,g_1).
		\end{equation*}
		
		Now, repeating the arguments to $\psi$ defined as 
		\begin{equation*}\label{psi1}
		\psi(y_1)=\left\lbrace\begin{array}{ll}
		0\mbox{, if }0\leq x< y_0,\\
		\displaystyle\int_{y_0}^{y_1}\phi(t)dt-\int_{y_0}^L\phi(t)dt\mbox{, if }y_0<x<L,
		\end{array}\right. 
		\end{equation*}
		with $\phi\in C_0^\infty(y_0,L)$, we get
		\begin{equation*}
		\tilde{u}_1(x,y_1,y_2)=0\mbox{ a.e. }(x,y_1,y_2)\in (0,1)\times (y_0,L)\times (g_0,g_1).
		\end{equation*}
		Therefore,
		\begin{equation*}
		u_1(x,y_1,y_2)=0 \mbox{ a.e. }(x,y_1,y_2)\in (0,1)\times Y^*_+.
		\end{equation*}
		
		Finally, for $y_0=0$, set
		\begin{eqnarray*}
		\psi(y_1)=\int_0^{y_1}\phi(t)dt,\mbox{ with }0<y_1<L,
		\end{eqnarray*}
		for any $\phi\in C_0^\infty(0,L)$. In this case, we have
		$$
		\left\{(0,y_2):y_2\in (g_0,g_1)\right\}\cap Y^*_+=\emptyset\mbox{, and }\left\{(L,y_2):y_2\in (g_0,g_1)\right\}\cap Y^*_+=\emptyset.
		$$
		Then, the sequence
		$$
		\varphi^\varepsilon(x,y)=\varepsilon^\alpha\tilde{\varphi}\left(x,\frac{y}{\varepsilon}\right)\psi\left(\left\{\frac{x}{\varepsilon^\alpha}\right\}\right),\quad (x,y)\in R^\varepsilon,
		$$
		is well defined since $\varphi \in C^\infty_0(R_+)$.
		
		Thus, using the same arguments as before, we get that $u_1=0$ a.e. $(x,y_1,y_2) \in (0,1)\times Y^*_+$.
		Therefore, 
		\begin{equation}\label{auxiliaruepsilon}
		T_\varepsilon(|\nabla u^+_\varepsilon|^{p-2}\partial_x u^+_\varepsilon)\rightharpoonup 0 \quad \mbox{ weakly in }L^p\left((0,1)\times Y^*_+\right),
		\end{equation}
		and then, 
		$$
		a_0\cdot(1,0)=u_1=0 \quad \mbox{ a.e. }(x,y_1,y_2)\in (0,1)\times Y^*_+.
		$$
		
		For the function $u_\varepsilon^-$, due to Propositions \ref{uniformlimtation} and \ref{rescalingproperties}, there exists $u^-\in W^{1,p}(0,1)$ such that, up to subsequences,
		\begin{eqnarray*}
		&&\Pi_\varepsilon u_\varepsilon^-\rightharpoonup u^- \mbox{ weakly in }W^{1,p}(R_-),\\
		&&\Pi_\varepsilon \partial_x u_\varepsilon^-\rightharpoonup\partial_x u^- \mbox{ weakly in } L^p(R_-),\\
		&&\Pi_\varepsilon \partial_y u_\varepsilon^-\rightharpoonup 0 \mbox{ weakly in } L^p(R_-).
		\end{eqnarray*}
		
		Furthermore, from items (2) and (4) of Proposition \ref{rescalingproperties}, we have 
		\begin{equation*}
		\left|\left|\Pi_\varepsilon u_\varepsilon^--u\right|\right|_{L^p(R_-)}=\left|\left|\left|u_\varepsilon^--u\right|\right|\right|_{L^p(R_-^\varepsilon)}\leq \left|\left|\left|u_\varepsilon-u\right|\right|\right|_{L^p(R^\varepsilon)}.
		\end{equation*}
		Thus,  
		\begin{equation*}\label{4000}
		\Pi_\varepsilon u_\varepsilon^-\rightarrow u\mbox{ strongly in }L^p(R_-),
		\end{equation*}
		which means that 
		$$
		u(x)=u_-(x)\mbox{ a.e. in }(0,1).
		$$
		
		Next, we pass to the limit in $\Pi_\varepsilon\left(|\nabla u_\varepsilon^-|^{p-2}\nabla u_\varepsilon^-\right)$. We show that 
		$$
		\Pi_\varepsilon\left(|\nabla u_\varepsilon^-|^{p-2}\nabla u_\varepsilon^-\right)\rightharpoonup |u'|^{p-2}(u',0) \mbox{ in }L^p(R_-)^2.
		$$
		In order to do that, suppose that $a_1$ is the limit of $\Pi_\varepsilon\left(|\nabla u_\varepsilon^-|^{p-2}\nabla u_\varepsilon^-\right)$, up to subsequences. Again, we use monotonicity arguments. 
		Let $w\in W^{1,p}(0,1)$. 
		
		Using $T_\varepsilon$, $T_\varepsilon^+$ and the rescaling operator $\Pi_\varepsilon$ in the variational formulation \eqref{variationalproblem01}, we obtain 
		\begin{eqnarray}\label{variationalunfalfabig}
		&&\nonumber\frac{1}{L}\int_{(0,1)\times Y^*_+}T_\varepsilon^+\left(|\nabla u_\varepsilon^+|^{p-2}\nabla u_\varepsilon^+\right)T_\varepsilon^+\nabla \varphi dxdy_1dy_2\\\nonumber
		&&+\frac{1}{\varepsilon}\int_{R_{1+}^\varepsilon}|\nabla u_\varepsilon^+|^{p-2}\nabla u_\varepsilon^+\nabla \varphi dxdy\\\nonumber
		&&+\int_{R_-}\Pi_\varepsilon\left(|\nabla u_\varepsilon^-|^{p-2}\nabla u_\varepsilon^-\right)\Pi_\varepsilon\nabla \varphi dxdy\\\nonumber
		&&+\frac{1}{L}\int_{(0,1)\times Y^*}T_\varepsilon(|u_\varepsilon|^{p-2}u_\varepsilon)T_\varepsilon\varphi dxdy_1dy_2\\\nonumber
		&&+\frac{1}{\varepsilon}\int_{R_1^\varepsilon}|u_\varepsilon|^{p-2}u_\varepsilon\varphi dxdy\\
		&&=\frac{1}{\varepsilon}\int_{R^\varepsilon}f^\varepsilon \varphi dxdy.
		\end{eqnarray}
		Hence, we can pass to the limit taking test functions $\varphi\in W^{1,p}(0,1)$ to obtain 
		\begin{equation*}\label{prehomogenizedalfabig}
		\int_{R_-}a_1(\varphi',0)dxdy+\dfrac{1}{L}\int_{(0,1)\times Y^*}|u|^{p-2}u\varphi dxdy_1dy_2=\int_0^1 \hat{f}\varphi dx.
		\end{equation*}
		
		Now, notice that
		\begin{equation*}\label{ineqalfabig1}
		0\leq\dfrac{1}{L}\int_{(0,1)\times Y^*_+}|T_\varepsilon^+\nabla u_\varepsilon^+|^p dxdy_1dy_2,
		\end{equation*}
		also, due to \eqref{auxiliaruepsilon}, we have  
		\begin{eqnarray*}\label{ineqalfabig2}
		0= \lim_{\varepsilon\rightarrow 0}\left(\dfrac{1}{L}\int_{(0,1)\times Y^*_+}|T_\varepsilon^+\nabla u_\varepsilon^+|^{p-2}T_\varepsilon^+\nabla u_\varepsilon^+(T_\varepsilon^+u',0) dxdy_1dy_2\right)
		\end{eqnarray*}
		since $T_\varepsilon^+u' \to u'$ strongly in $L^p((0,1) \times Y^*)$.
		
		Using the monotonicity of $|\cdot|^{p-2}\cdot$, we have 
		\begin{eqnarray}
			0&\leq& \int_{R_-}\left(\Pi_\varepsilon\left(|\nabla u_\varepsilon^-|^{p-2}\nabla u_\varepsilon^-\right)-\Pi_\varepsilon(|u'|^{p-2}(u',0))\right)\left(\Pi_\varepsilon \nabla u_\varepsilon^--\Pi_\varepsilon(u',0)\right) dxdy \nonumber \\
			&\leq&
			\int_{R_-}\left(\Pi_\varepsilon\left(|\nabla u_\varepsilon^-|^{p-2}\nabla u_\varepsilon^-\right) -\Pi_\varepsilon(|u'|^{p-2}(u',0))\right)\left(\Pi_\varepsilon \nabla u_\varepsilon^--\Pi_\varepsilon(u',0)\right) dxdy \nonumber \\
			&&+ \dfrac{1}{L}\int_{(0,1)\times Y^*_+}|T_\varepsilon^+\nabla u_\varepsilon^+|^p dxdy_1dy_2-\dfrac{1}{L}\int_{(0,1)\times Y^*_+}|T_\varepsilon^+\nabla u_\varepsilon^+|^{p-2}T_\varepsilon^+\nabla u_\varepsilon^+(T_\varepsilon^+u',0) dxdy_1dy_2 \nonumber \\
			&&+ \dfrac{1}{L}\int_{(0,1)\times Y^*_+}|T_\varepsilon^+\nabla u_\varepsilon^+|^{p-2}T_\varepsilon^+\nabla u_\varepsilon^+(T_\varepsilon^+u',0) dxdy_1dy_2  \label{1607}
		\end{eqnarray}
		
		Now, we can pass to the limit in \eqref{1607} using the variational formulation \eqref{variationalunfalfabig}. Taking the test functions $\varphi=u_\varepsilon-u$ in \eqref{variationalunfalfabig}, we have that
		\begin{equation} \label{1612}
		\begin{gathered}
		\dfrac{1}{L}\int_{(0,1)\times Y^*_+}|T_\varepsilon^+\nabla u_\varepsilon^+|^p dxdy_1dy_2-\dfrac{1}{L}\int_{(0,1)\times Y^*_+}|T_\varepsilon^+\nabla u_\varepsilon^+|^{p-2}T_\varepsilon^+\nabla u_\varepsilon^+(T_\varepsilon^+u',0) dxdy_1dy_2 \\
		+ \int_{R_-} \Pi_\varepsilon\left(|\nabla u_\varepsilon^-|^{p-2}\nabla u_\varepsilon^-\right) \left(\Pi_\varepsilon \nabla u_\varepsilon^--\Pi_\varepsilon(u',0)\right) dxdy \to 0
		\end{gathered}
		\end{equation}
		since $T_\varepsilon( u_\varepsilon - u) \to 0$ strongly in $L^p((0,1) \times Y^*)$.
		Hence, as $\Pi_\varepsilon \nabla u_\varepsilon^--\Pi_\varepsilon(u',0) \rightharpoonup 0$ weakly in $L^p(R_-)$, we get from \eqref{1607} and \eqref{1612} that 
		\begin{equation} \label{1621}
		\lim_{\varepsilon\rightarrow 0}\int_{R_-}\left(\Pi_\varepsilon\left(|\nabla u_\varepsilon^-|^{p-2}\nabla u_\varepsilon^-\right)-\Pi_\varepsilon(|u'|^{p-2}(u',0))\right)\left(\Pi_\varepsilon \nabla u_\varepsilon^--\Pi_\varepsilon(u',0)\right) dxdy=0.
		\end{equation}
		
		On the other hand, we have
		\begin{eqnarray*}
			\int_{R_-}|\Pi_\varepsilon\nabla u_\varepsilon^--(u',0)|^pdxdy&\leq& c \int_{R_-}|\Pi_\varepsilon\nabla u_\varepsilon^--\Pi_\varepsilon(u',0)|^pdxdy\\
			&&+c\int_{R_-}|\Pi_\varepsilon (u',0)-(u',0)|^pdxdy.
		\end{eqnarray*}
		If $p>2$, we get from \eqref{1621} that 
		\begin{eqnarray*}
			&&\int_{R_-}|\Pi_\varepsilon\nabla u_\varepsilon^--\Pi_\varepsilon(u',0)|^pdxdy\leq\\ 
			&& \qquad c\int_{R_-}\left(\Pi_\varepsilon\left(|\nabla u_\varepsilon^-|^{p-2}\nabla u_\varepsilon^-\right)-\Pi_\varepsilon(|u'|^{p-2}(u',0))\right)\left(\Pi_\varepsilon \nabla u_\varepsilon^--\Pi_\varepsilon(u',0)\right) dxdy\\
			&&\qquad \rightarrow 0.
		\end{eqnarray*}
		For $1<p\leq 2$, using H\"older's inequality and \eqref{1621}, we obtain
		\begin{eqnarray*}
			&&\int_{R_-}|\Pi_\varepsilon\nabla u_\varepsilon^--\Pi_\varepsilon(u',0)|^pdxdy\\
			&=&\int_{R_-}\dfrac{\left(|\Pi_\varepsilon\nabla u_\varepsilon^-|+|\Pi_\varepsilon(u',0)|\right)^{\frac{(p-2)p}{2}}}{\left(|\Pi_\varepsilon\nabla u_\varepsilon^-|+|\Pi_\varepsilon(u',0)|\right)^{\frac{(p-2)p}{2}}}|\Pi_\varepsilon\nabla u_\varepsilon^--\Pi_\varepsilon(u',0)|^pdxdy\\
			&\leq&\left(\int_{R_-}\left(|\Pi_\varepsilon\nabla u_\varepsilon^-|+|\Pi_\varepsilon(u',0)|\right)^{(p-2)}|\Pi_\varepsilon\nabla u_\varepsilon^--\Pi_\varepsilon(u',0)|^2dxdy\right)^{p/2}\\
			&&\left(\int_{R_-}\left(|\Pi_\varepsilon\nabla u_\varepsilon^-|+|\Pi_\varepsilon(u',0)|\right)^{p}dxdy\right)^{(2-p)/2}
		\end{eqnarray*}
		\begin{eqnarray*}
			&\leq&\left(c\int_{R_-}\left(\Pi_\varepsilon\left(|\nabla u_\varepsilon^-|^{p-2}\nabla u_\varepsilon^-\right)-\Pi_\varepsilon(|u'|^{p-2}(u',0))\right)\left(\Pi_\varepsilon \nabla u_\varepsilon^--\Pi_\varepsilon(u',0)\right) dxdy\right)^{p/2}\\
			&&\left(\int_{R_-}\left(|\Pi_\varepsilon\nabla u_\varepsilon^-|+|\Pi_\varepsilon(u',0)|\right)^{p}dxdy\right)^{(2-p)/2}\\
			&\rightarrow& 0.
		\end{eqnarray*}

		Thus, as $\int_{R_-}|\Pi_\varepsilon (u',0)-(u',0)|^pdxdy \to 0$, it follows that 
		\begin{equation*}
		\Pi_\varepsilon \nabla u_\varepsilon^-\rightarrow (u',0)\mbox{ strongly in }L^p(R_-)\times L^p(R_-),
		\end{equation*}
		and then, 
		\begin{equation*}
		|\Pi_\varepsilon \nabla u_\varepsilon^-|^{p-2}\Pi_\varepsilon \nabla u_\varepsilon^-\rightarrow |u'|^{p-2}(u',0)\mbox{ strongly in }L^{p'}(R_-)\times L^{p'}(R_-).
		\end{equation*}
		
		Finally, we can pass to the limit in \eqref{variationalunfalfabig}, for any test functions $\varphi\in W^{1,p}(0,1)$ to obtain  
		\begin{equation*}
		\int_{R_-}|u'|^{p-2}u'\varphi'dxdy+\dfrac{1}{L}\int_{(0,1)\times Y^*}|u|^{p-2}u\varphi dxdy_1dy_2=\int_0^1\hat{f}\varphi dx,
		\end{equation*}
		which is
		\begin{equation*}
		\int_0^1g_0|u'|^{p-2}u'\varphi'+\dfrac{|Y^*|}{L}|u|^{p-2}u\varphi dx=\int_0^1\hat{f}\varphi dx,
		\end{equation*}
		for all $\varphi\in W^{1,p}(0,1)$, concluding the proof.
	\end{proof}
	\begin{corollary} \label{correcal>1}
		Under hypothesis of Theorem \ref{strongresult}, we have that the solutions of problem \eqref{problem01} satisfy 
		\begin{eqnarray}
			\label{convalfa>11}&&\Pi_\varepsilon u_\varepsilon^-\rightarrow u \mbox{ strongly in } W^{1,p}(R_-)\\
			&&\label{convalfa>12}\Pi_\varepsilon \left(|\nabla u_\varepsilon^-|^{p-2}\partial_xu_\varepsilon^-\right)\rightarrow |u'|^{p-2}u'\mbox{ strongly in } L^{p}(R_-)\\
			&&\label{convalfa>13}\Pi_\varepsilon \left(|\nabla u_\varepsilon^-|^{p-2}\partial_y u_\varepsilon^-\right)\rightarrow 0\mbox{ strongly in } L^{p}(R_-)\\
			&&\label{convalfa>14}T_\varepsilon^+\left(|\nabla u_\varepsilon^+|^{p-2}\partial_x u_\varepsilon^+\right) \rightarrow 0\mbox{ strongly in } L^{p}\left((0,1)\times Y^*_+\right)\\
			&&\label{convalfa>15}T_\varepsilon^+\left(|\nabla u_\varepsilon^+|^{p-2}\partial_y u_\varepsilon^+\right) \rightarrow 0\mbox{ strongly in } L^{p}\left((0,1)\times Y^*_+\right)
		\end{eqnarray}
		
		Moreover, 
		$$
		\left|\left|\left|\nabla u_\varepsilon\right|\right|\right|_{L^p(R^\varepsilon_+)}\rightarrow0.
		$$
	\end{corollary}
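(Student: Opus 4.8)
The plan is to obtain most of Corollary \ref{correcal>1} directly from the proof of Theorem \ref{strongresult}, and to supply only the two genuinely new ingredients: the strong convergence of the unfolded gradient on the oscillating part $R^\varepsilon_+$, and the vanishing of the whole rescaled Dirichlet energy on $R^\varepsilon_+$, incomplete cell included. I would first dispose of \eqref{convalfa>11}, \eqref{convalfa>12} and \eqref{convalfa>13}. In the proof of Theorem \ref{strongresult} it is already established that $\Pi_\varepsilon u_\varepsilon^-\to u$ and $\Pi_\varepsilon\nabla u_\varepsilon^-\to(u',0)$ strongly in $L^p(R_-)$, and therefore $|\Pi_\varepsilon\nabla u_\varepsilon^-|^{p-2}\Pi_\varepsilon\nabla u_\varepsilon^-\to|u'|^{p-2}(u',0)$ strongly in $L^{p'}(R_-)^2$. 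Since $\Pi_\varepsilon$ commutes with pointwise operations, comparing the two components of this last convergence gives \eqref{convalfa>12} and \eqref{convalfa>13}. Moreover, by item (3) of Proposition \ref{rescalingproperties}, $\partial_x\Pi_\varepsilon u_\varepsilon^-=\Pi_\varepsilon\partial_x u_\varepsilon^-\to u'$ and $\partial_y\Pi_\varepsilon u_\varepsilon^-=\varepsilon\,\Pi_\varepsilon\partial_y u_\varepsilon^-\to 0$ in $L^p(R_-)$, so $\nabla\Pi_\varepsilon u_\varepsilon^-\to(u',0)=\nabla u$ in $L^p(R_-)^2$, which together with $\Pi_\varepsilon u_\varepsilon^-\to u$ in $L^p(R_-)$ is exactly \eqref{convalfa>11}.

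Next I would handle the oscillating part. Returning to the monotonicity argument in the proof of Theorem \ref{strongresult}, combine \eqref{1612} with \eqref{1621} — the latter forces the $R_-$ term in \eqref{1612} to tend to zero, after also using $\Pi_\varepsilon\nabla u_\varepsilon^--(u',0)\rightharpoonup 0$ weakly in $L^p(R_-)^2$ — and with the weak limit \eqref{auxiliaruepsilon} tested against the strongly convergent $T_\varepsilon^+u'\to u'$. This gives
\[
\frac{1}{L}\int_{(0,1)\times Y^*_+}\big|T_\varepsilon^+\nabla u_\varepsilon^+\big|^{p}\,dxdy_1dy_2\longrightarrow 0,
\]
i.e. $T_\varepsilon^+\nabla u_\varepsilon^+\to 0$ strongly in $L^p((0,1)\times Y^*_+)^2$. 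With the notation $a_p(s)=|s|^{p-2}s$ of Corollary \ref{corolarioplaplaciano} one has $|a_p(s)|=|s|^{p-1}$, so $T_\varepsilon^+(|\nabla u_\varepsilon^+|^{p-2}\nabla u_\varepsilon^+)\to 0$ strongly in $L^{p'}((0,1)\times Y^*_+)^2$, and reading off the first and second components yields \eqref{convalfa>14} and \eqref{convalfa>15}.

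For the last assertion $|||\nabla u_\varepsilon|||_{L^p(R^\varepsilon_+)}\to 0$, the convergence just obtained only controls the complete cells $R^\varepsilon_{0+}$, whereas the crude bounds give merely $\varepsilon^{-1}\int_{R^\varepsilon_{1+}}|\nabla u_\varepsilon|^p\le c$ on the single uncovered strip; closing this gap is the step I expect to be the main obstacle, and I would do it through a global energy balance. Testing \eqref{variationalproblem01} with $\varphi=u_\varepsilon$ and dividing by $\varepsilon$,
\[
\frac1\varepsilon\int_{R^\varepsilon}\big(|\nabla u_\varepsilon|^p+|u_\varepsilon|^p\big)\,dxdy=\frac1\varepsilon\int_{R^\varepsilon}f^\varepsilon u_\varepsilon\,dxdy .
\]
Writing $u_\varepsilon=u+(u_\varepsilon-u)$, using $|||u_\varepsilon-u|||_{L^p(R^\varepsilon)}\to 0$ together with the uniform bound on $|||f^\varepsilon|||_{L^{p'}(R^\varepsilon)}$ and the hypothesis $\hat f^\varepsilon\rightharpoonup\hat f$ in $L^{p'}(0,1)$, the right-hand side tends to $\int_0^1\hat f\,u\,dx$, and in the same way $\varepsilon^{-1}\int_{R^\varepsilon}|u_\varepsilon|^p\,dxdy\to\langle g\rangle_{(0,L)}\int_0^1|u|^p\,dx$. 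Testing the limit equation of Theorem \ref{strongresult} with $\varphi=u$, and recalling $|Y^*|/L=\langle g\rangle_{(0,L)}$, one gets $\int_0^1\big(g_0|u'|^p+\langle g\rangle_{(0,L)}|u|^p\big)\,dx=\int_0^1\hat f\,u\,dx$; hence $\varepsilon^{-1}\int_{R^\varepsilon}|\nabla u_\varepsilon|^p\,dxdy\to g_0\int_0^1|u'|^p\,dx$. On the other hand, by the strong convergence $\Pi_\varepsilon\nabla u_\varepsilon^-\to(u',0)$ in $L^p(R_-)^2$ and item (1) of Proposition \ref{rescalingproperties}, $\varepsilon^{-1}\int_{R^\varepsilon_-}|\nabla u_\varepsilon|^p\,dxdy=\int_{R_-}|\Pi_\varepsilon\nabla u_\varepsilon^-|^p\,dxdy\to g_0\int_0^1|u'|^p\,dx$. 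Subtracting these two limits gives $|||\nabla u_\varepsilon|||_{L^p(R^\varepsilon_+)}^p=\varepsilon^{-1}\int_{R^\varepsilon_+}|\nabla u_\varepsilon|^p\,dxdy\to 0$, which completes the argument.
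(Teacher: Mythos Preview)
Your treatment of \eqref{convalfa>11}--\eqref{convalfa>15} is correct and matches the paper's: both simply extract these from the strong convergences already established inside the proof of Theorem \ref{strongresult}, and obtain $T_\varepsilon^+\nabla u_\varepsilon^+\to 0$ in $L^p((0,1)\times Y^*_+)$ by combining \eqref{1612}, \eqref{1621} and \eqref{auxiliaruepsilon} (the paper phrases this as ``take $\varphi=u_\varepsilon-u$ in \eqref{variationalunfalfabig}'', which is the same computation).

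The genuine difference is in the final assertion $|||\nabla u_\varepsilon|||_{L^p(R^\varepsilon_+)}\to 0$. The paper simply invokes \cite[Proposition~2.17]{AM2}, which transfers strong convergence of the unfolded sequence back to the thin domain. You instead run a global energy balance: test the $\varepsilon$-problem with $u_\varepsilon$, identify the limits of the right-hand side and the zero-order term using $|||u_\varepsilon-u|||_{L^p(R^\varepsilon)}\to 0$, $\hat f^\varepsilon\rightharpoonup\hat f$, and the averaging $g(x/\varepsilon^\alpha)\rightharpoonup\langle g\rangle$, and compare with the limit equation tested against $u$ to get $\varepsilon^{-1}\int_{R^\varepsilon}|\nabla u_\varepsilon|^p\to g_0\int_0^1|u'|^p$; subtracting the already known $R^\varepsilon_-$ contribution gives the result. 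This argument is correct and has the merit of being fully self-contained: it handles the uncovered strip $R^\varepsilon_{1+}$ without any appeal to an external transfer lemma, because the global identity $\varepsilon^{-1}\int_{R^\varepsilon}=\varepsilon^{-1}\int_{R^\varepsilon_-}+\varepsilon^{-1}\int_{R^\varepsilon_+}$ never splits off $R^\varepsilon_1$. The paper's route is shorter but outsources exactly the step you flagged as the obstacle.
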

	\begin{proof}
		The convergence of \eqref{convalfa>11} - \eqref{convalfa>13} are proved in Theorem \ref{strongresult}.
		For the convergence \eqref{convalfa>14} and \eqref{convalfa>15}, we take as test function $(u_{\varepsilon}-u)$ in \eqref{variationalunfalfabig}
		to get from \eqref{convalfa>11} - \eqref{convalfa>13} and \eqref{auxiliaruepsilon} that 
		\begin{equation*}
		\int_{(0,1)\times Y^*_+}T_\varepsilon^+\left(|\nabla u_\varepsilon^+|^{p-2}\nabla u_\varepsilon^+\right)T_\varepsilon^+(\nabla u_{\varepsilon}-\nabla u)dxdy_1dy_2\rightarrow 0.
		\end{equation*}

		Thus
		\begin{equation*}
			\int_{(0,1)\times Y^*_+}\left|T_\varepsilon^+\nabla u_\varepsilon^+\right|^{p}\rightarrow 0,
		\end{equation*}
		which proves \eqref{convalfa>14} and \eqref{convalfa>15}.
		
		The last convergence follows from the above convergences and  \cite[Proposition 2.17]{AM2}.
		\end{proof}

	\vspace{0.7 cm}

{\bf Acknowledgements.} 
The first author (JMA) is partially supported by grants MTM2016-75465-P, ICMAT Severo Ochoa project SEV-2015-0554, MINECO, Spain and Grupo de Investigaci\'on CADEDIF, UCM. The second author (JCN) is partially supported by CNPq (Brazil), and the third author (MCP) by CNPq 302960/2014-7 and FAPESP 2017/02630-2 (Brazil).


\end{document}